\documentclass[11pt]{article}
\usepackage[T1]{fontenc}
\usepackage[utf8]{inputenc}
\usepackage{lmodern}
\usepackage[margin=1in]{geometry}
\usepackage{amsmath,amssymb,amsthm,graphicx,booktabs,longtable,array}
\usepackage{xcolor}
\usepackage[numbers,sort&compress]{natbib}
\usepackage[unicode,breaklinks,colorlinks,linkcolor=blue!45!black,citecolor=blue!45!black,urlcolor=blue!45!black]{hyperref}
\hypersetup{pdftitle={Four-arm polyominoes in Golomb's hierarchy},pdfauthor={Angel Ivanov Raychev},pdfsubject={Complete classification and Lean verification}}

\newcommand{\N}{\mathbb{N}}
\newcommand{\Z}{\mathbb{Z}}
\newcommand{\Pshape}[4]{P(#1,#2,#3,#4)}
\newcommand{\Rep}{\mathrm{Rep}}
\newcommand{\R}{\mathrm{R}}
\newcommand{\HS}{\mathrm{HS}}
\newcommand{\BS}{\mathrm{BS}}
\newcommand{\Q}{\mathrm{Q}}
\newcommand{\St}{\mathrm{S}}
\newcommand{\HP}{\mathrm{HP}}
\newcommand{\Plane}{\mathrm{Plane}}
\newcommand{\artifacturl}{https://raw.githubusercontent.com/RaychevAngel/angelraychev.com/ec308bfcb092fa71a78cecd991be6e85e5b369b9/public/polyominoes/lean-proofs.zip}
\newcommand{\paperwebsite}{https://angelraychev.com/four-arm-polyominoes/}
\newtheorem{theorem}{Theorem}[section]
\newtheorem{lemma}[theorem]{Lemma}
\newtheorem{proposition}[theorem]{Proposition}
\newtheorem{corollary}[theorem]{Corollary}
\theoremstyle{definition}

\theoremstyle{remark}

\title{Four-arm polyominoes in Golomb's hierarchy\\[4pt]
\large A complete classification with Lean verification}
\author{Angel Ivanov Raychev\\
\small\href{mailto:angel.ivanov.raychev@gmail.com}{angel.ivanov.raychev@gmail.com}}
\date{September 2026}
\begin{document}
\maketitle
\begin{abstract}
We classify the polyominoes obtained by adjoining four straight arms to a
single square, allowing zero arm lengths, according to their ability to
tile rectangles, half-strips, bent strips, quadrants, strips, half-planes,
and the plane. We also classify their ability to tile an integer enlargement
of themselves. Tiles occupy whole square-grid cells; translations,
rotations, and reflections are permitted. Exactly five capability profiles
occur. For the family $P(n,1,1,0)$, the rectangle profile holds for $n\le3$
and the bent-strip profile, with no half-strip or rep-tiling, for every
$n\ge4$. A cross with four positive arms tiles the plane precisely when
two opposite arms have length one; it never tiles a half-plane. Explicit
periodic constructions and geometric obstructions are combined with finite
symbolic case certificates. A Lean 4 development verifies the full
classification for every natural four-tuple, including the interpretation
of the certificates as statements about arbitrary infinite tilings.
The account incorporates the author's 2020--2021 L- and T-polyomino work,
reconstructs Dahlke's gun argument, and documents the subsequent
AI-assisted proof development and formalization.
\end{abstract}
\begingroup
\small\setlength{\parskip}{0pt}
\tableofcontents
\endgroup
\clearpage
\section{Introduction and provenance}\label{sec:intro}
Golomb's hierarchy distinguishes several increasingly permissive regions
that a single polyomino may tile~\cite{golomb1966}. A plane tiling need
not accommodate a straight boundary, and a strip tiling need not admit
a cap. Determining these distinctions for an infinite parametric family
requires both constructions and obstructions valid without a bound on
the parameters or on the region widths. General questions about the
hierarchy remain a subject of study~\cite{winslow2018}.
Rectangle and half-strip tileability have also been studied systematically
by Reid~\cite{reid1997}.

This paper concerns the union of a horizontal and a vertical row of cells
meeting at a distinguished cell. Zero-length arms include the bar, L, and
T families. We give a single explicit decision rule for all eight tiling
capabilities considered here and prove that rule in Lean. The mathematical
results and their precise grid convention are stated in
Section~\ref{sec:classification}. Some proofs are short geometric arguments;
others use exhaustive finite case analysis with symbolic integer parameters.
The soundness principle for the latter is stated in
Theorem~\ref{thm:certificate-soundness}.
An \href{\paperwebsite}{interactive companion article} gives a tuple
classifier and additional illustrations; the
\href{https://angelraychev.com/polyominoes/paper/}{paper page} collects
the manuscript, sources, citation, and formal artifacts.

\subsection{Earlier work and the present development}
The author's original research was carried out between October 2020 and
March 2021, while at school. The L results were presented in two 2021
conference papers~\cite{raychevQuadrant2021,raychevHalfPlane2021}.
The author also obtained the T classification during that period, apart
from $\Pshape4110$ and $\Pshape5110$, in handwritten work that was not
prepared as a digital publication. Those handwritten T proofs were not
provided to the AI assistant used for the present development. The
reconstructed T results agree with the author's earlier classification.
The dates of the research are distinct from the later presentation dates.

Dahlke's \emph{Gun Theorem} is a direct source for the argument concerning
$\Pshape n110$~\cite{dahlkeGun}. Its section on a one-cell face treats
every $n\ge4$ and uses three supported-corner configurations to rule out
rectangles. Our formal corner system makes the reflected configurations
explicit and proves additional progress and containment inequalities for
half-strips and rep-tilings. We do not attribute the underlying
corner-propagation method, or rectangle nonrectifiability of the two small
cases, to the present investigation. The theorem was documented in the
revision of Tulleken's book dated 19 May 2019~\cite{tulleken2019}; the
original composition date of Dahlke's online argument is unspecified.
The half-strip and rep-tile conclusions are established here with explicit
certificates and formal proofs. We make no claim that these conclusions
could not also be extracted from the earlier argument.

The rectangle tiling of the Y hexomino used below is a historical
construction, attributed to Marlow and Dahlke in Reid's
record~\cite{reidY,marlow1985,dahlke1989y}. It was converted to cell coordinates and
independently verified; its discovery and minimality are not contributions
of this paper. The L classification is likewise credited to the original
papers rather than to its formal reconstruction.
Dahlke's separate \emph{L Theorem}, completed in its repository in
September 2020, studies rectangle tileability for a broader family of
thick L shapes~\cite{dahlkeL}. That related result is distinct in scope
from the half-plane classification used here.

The present contribution is a unified, explicit classification with
constructions, complete obstruction certificates, and a formal proof of
every capability for every tuple. It also documents the genuine-cross
analysis developed during the subsequent investigation. This contribution
statement does not assert priority for every individual tiling or local
lemma, and does not identify the number of generated proof obligations with
the intrinsic difficulty of the underlying mathematics.

\subsection{Use of generative AI}
OpenAI's Astra 6 model, operating through the Codex harness, was used
extensively in the development of this paper and its accompanying
artifacts. Astra 6's role included mathematical exploration,
reconstruction and development of proofs, discovery of constructions,
counterexample searches, certificate generation, Lean formalization,
drafting, and preparation of figures and source packages. In particular,
the work completing the two exceptional T cases, the genuine-cross
analysis, and the formal development were produced through this
AI-assisted investigation initiated and directed by the author. This
description distinguishes those contributions from the author's earlier
work and from the cited literature. AI assistance was substantive and was
not confined to language editing. Lean verification and independent
certificate checks concern the stated mathematical assertions; they do
not themselves establish historical priority or the quality of the
exposition. The author is responsible for the submitted manuscript.

\section{Definitions and the complete classification}\label{sec:classification}
Throughout, $\N=\{0,1,2,\ldots\}$. Identify the cell
$[x,x+1]\times[y,y+1]$ with $(x,y)\in\Z^2$; disjointness below means
disjoint cell sets, so neighboring closed squares may share edges.
For $a,b,c,d\in\N$ define
\begin{equation}\label{eq:shape}
\Pshape abcd=\{(x,0):-c\le x\le a\}\cup
                \{(0,y):-d\le y\le b\}.
\end{equation}
The arm parameters run east, north, west, and south and exclude the
junction. Thus the area is $a+b+c+d+1$. A copy is an integer translate of
any of the eight square-grid rotations or reflections. A tiling of a
region is an arbitrary collection of copies contained in the region,
pairwise disjoint, whose union is the region. No periodicity assumption
is imposed on an infinite tiling.

\begin{figure}[htbp]\centering
\includegraphics[width=.47\linewidth]{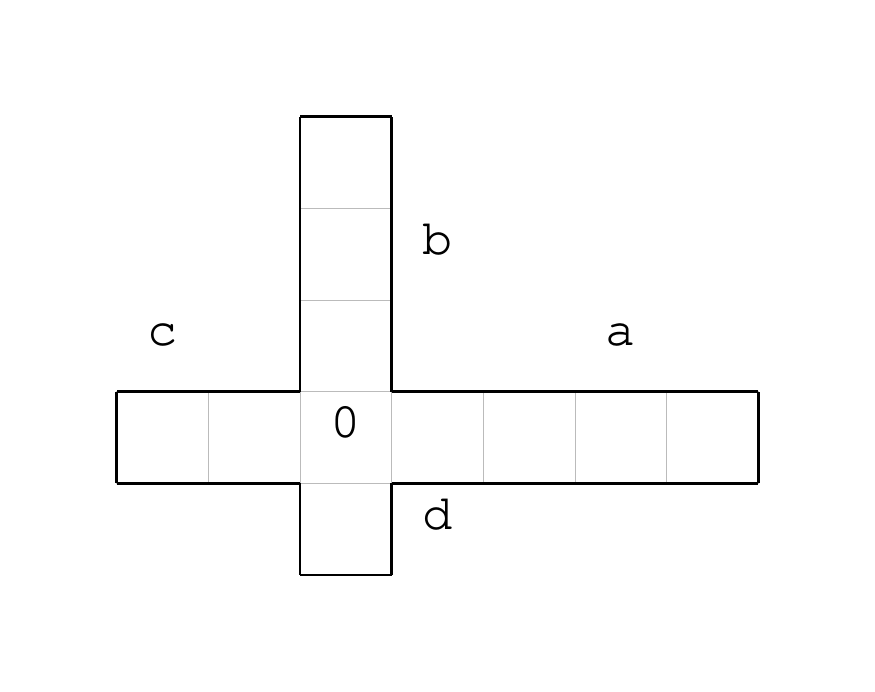}
\caption{The convention for $\Pshape4321$. The junction is counted once.}
\label{fig:shape}
\end{figure}

For positive integers $u,v,w$, the region capabilities are
\begin{center}\small
\begin{tabular}{@{}ll@{}}\toprule
Symbol&Some region of the following form is tileable\\\midrule
$\R$&$\{0,\ldots,u-1\}\times\{0,\ldots,v-1\}$\\
$\HS$&$\N\times\{0,\ldots,w-1\}$\\
$\BS$&$\{(x,y)\in\N^2:x<u\text{ or }y<v\}$\\
$\Q$&$\N^2$\\
$\St$&$\Z\times\{0,\ldots,w-1\}$\\
$\HP$&$\Z\times\N$\\
$\Plane$&$\Z^2$\\\bottomrule
\end{tabular}
\end{center}
Every width quantifier is existential and ranges over all positive
integers. A failure of $\HS$, for example, excludes every such width.
The capability $\Rep$ means that copies tile an enlargement of the
polyomino by a natural integer linear factor $k\ge2$: every original cell
is replaced by its $k\times k$ block. This is the lattice rep-tile
convention. Off-grid placements and dissections into unequal-scale copies
are outside the statement.

The region implications are
\begin{equation}\label{eq:hierarchy}
\R\Longrightarrow\HS\Longrightarrow\BS
\Longrightarrow\begin{cases}\Q\\\St\end{cases}
\Longrightarrow\HP\Longrightarrow\Plane,
\qquad \R\Longrightarrow\Rep\Longrightarrow\Q.
\end{equation}
They do not form a single chain. Define the following \emph{profiles},
listing all positive capabilities and excluding every unlisted one:
\begin{center}\small
\begin{tabular}{@{}lcccccccc@{}}\toprule
Profile&$\R$&$\Rep$&$\HS$&$\BS$&$\Q$&$\St$&$\HP$&$\Plane$\\\midrule
Rectangle&yes&yes&yes&yes&yes&yes&yes&yes\\
Bent strip&--&--&--&yes&yes&yes&yes&yes\\
Strip&--&--&--&--&--&yes&yes&yes\\
Plane only&--&--&--&--&--&--&--&yes\\
Non-tiler&--&--&--&--&--&--&--&--\\\bottomrule
\end{tabular}
\end{center}

\noindent\begin{minipage}{\linewidth}
\begin{theorem}[Complete classification]\label{thm:classification}
Every polyomino in \eqref{eq:shape} has exactly the profile given below.
For an L shape use $\Pshape ab00$ with $a\ge b\ge1$; for a T shape use
$\Pshape abc0$ with $a\ge c\ge1$ and $b\ge1$. These normal forms are
obtained by square-grid symmetries. Bars include the single cell and all
representations with only two opposite positive arms.
\begin{center}\small\normalfont
\begin{tabular}{@{}p{.72\linewidth}l@{}}\toprule
Normalized shape and parameter condition&Profile\\\midrule
Bar&Rectangle\\
L: $b=1$&Rectangle\\
L: $b=2$ or $(a,b)=(4,3)$&Strip\\
L: all other $a\ge b\ge1$&Plane only\\\addlinespace
T: $b=c=1$, $a\le3$&Rectangle\\
T: $b=c=1$, $a\ge4$&Bent strip\\
T: $b=1$, $c\ge2$&Strip\\
T: $b\ge2$ and at least one of $c=1$, $b=2$, or $(c=2,\ b=a+3)$&Plane only\\
T: all remaining parameters&Non-tiler\\\addlinespace
Cross: $a,b,c,d\ge1$, with $a=c=1$ or $b=d=1$&Plane only\\
Cross: all remaining positive parameters&Non-tiler\\\bottomrule
\end{tabular}
\end{center}
\end{theorem}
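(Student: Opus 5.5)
The proof has two halves. Because of the implication diagram \eqref{eq:hierarchy}, it suffices for each row of the table to exhibit a construction for the strongest claimed capability and an obstruction for the weakest excluded one(s). For the Rectangle profile one rectangle tiling suffices, since $\R$ implies everything else. For Bent strip we need a bent-strip tiling and refutations of $\HS$ and $\Rep$. For Strip we need a strip tiling and refutations of $\Q$ and $\BS$ (and hence $\HS$, $\R$, $\Rep$). For Plane only we need a plane tiling and a refutation of $\HP$ (and of $\Q$, which is implied by $\HP$ failing only through $\BS$, so $\Q$ must be refuted separately). For Non-tiler we need a refutation of $\Plane$. I will first reduce by the eight square-grid symmetries to the stated normal forms. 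Bars are immediate, and the L rows are credited to the earlier L papers, reconstructed in the same framework.

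\textbf{Constructions.}
\begin{itemize}
\item For T with $b=c=1$, $a\le3$, I will exhibit explicit rectangles, including the Marlow--Dahlke Y-hexomino tiling for $a=3$.
\item For $a\ge4$, I will give a periodic bent-strip tiling, uniform in $a$. Pairs of T's interlock into $2\times(a+2)$-type blocks along each arm, with a fixed corner patch.
\item For T with $b=1$, I will pair two copies into a parallelogram-like strip tile.
\item For the Plane-only T rows and the crosses with $a=c=1$, I will give lattice tilings. A cross with $a=c=1$ is a ``thickened plus'' whose translates by a suitable lattice of index $a+b+c+d+1$ cover $\Z^2$ exactly once. I will check the lattice by verifying that the shifted cells form a complete residue system.
\end{itemize}

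\textbf{Obstructions.}
\begin{itemize}
\item For Plane failures (the remaining T's and crosses), I will use a local forcing argument. Consider the cells adjacent to the junction pocket between two long arms. Any copy covering such a cell must enter with a long arm or its junction, and a finite symbolic case tree shows that every choice leaves an uncoverable cell. The parameters enter only through inequalities like $b\ge2$, $c\ge2$, $b\ne a+3$, so each branch is a linear-arithmetic certificate.
\item For half-plane and quadrant failures, I will examine the boundary row. Each boundary cell is covered by a copy whose placement is forced up to finitely many orientations, and propagating along the boundary yields a contradiction. For crosses this is immediate, since no copy can have a flat face without an arm sticking out below.
\item For $P(n,1,1,0)$, $n\ge4$, I will follow Dahlke's corner propagation. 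A supported corner forces a new supported corner strictly further along. In a rectangle, half-strip, or $k$-fold enlargement this gives an infinite monotone sequence inside a bounded-width region, contradicting containment.
\end{itemize}
The soundness of all these finite case analyses, as statements about arbitrary infinite tilings, is exactly Theorem~\ref{thm:certificate-soundness}.

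\textbf{Main obstacle.} The hardest step is the half-strip and Rep exclusion for $P(n,1,1,0)$. The progress measure must strictly increase in every reflected corner configuration, and it must be bounded both by the width $w$ (for half-strips) and by the scaled shape (for Rep). I would first try the measure ``distance of the supported corner from the cap edge'' and check each of Dahlke's three configurations plus their reflections symbolically in $n$.

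A secondary difficulty is exhaustiveness of the Plane-only vs Non-tiler boundary for T shapes with $c=2$. There the special case $b=a+3$ admits a tiling, so the case tree must track the equality $b=a+3$ exactly rather than through inequalities.
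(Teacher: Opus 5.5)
Your overall architecture (symmetry reduction, using \eqref{eq:hierarchy} to minimize the work, explicit rectangle, strip, bent-strip and lattice constructions, and local exhaustive obstructions justified by Theorem~\ref{thm:certificate-soundness}) is the paper's. However, two concrete steps you commit to would fail.

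The first is the cross construction. A one-copy translational lattice tiling of $\Pshape{1}{b}{1}{d}$, with index equal to the area $b+d+3$, exists only when $b=d$. The vertical bar has $n=b+d+1\ge3$ cells, so $\Lambda$ must avoid $(0,k)$ for $1\le|k|\le n-1$. The minimal vertical period $t\ge n$ of $\Lambda$ divides the index $n+2$, which forces $t=n+2$ and $\Lambda=\langle(0,n+2),(1,u)\rangle$. The lattice must also avoid the differences $(1,k)$ for $|k|\le\max(b,d)$, and $(2,0)$. For $b\ne d$ no residue $u$ survives. For instance, $\Pshape{1}{2}{1}{1}$ forces $u\equiv3\pmod 6$, and then $(2,0)=2(1,3)-(0,6)\in\Lambda$. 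You need the half-turn copy as well, as in Proposition~\ref{prop:crosspositive} after rotation: $C$ and $-C+(a+2,1)$ over a lattice of index $2(a+c+3)$.

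The second, more serious problem is the $\Pshape{n}{1}{1}{0}$ argument. ``An infinite monotone sequence of corners inside a bounded-width region'' is no contradiction in a half-strip, because a half-strip is unbounded lengthwise. The genuine transitions include zero-height steps ($\beta\to\alpha$ and $\gamma\to\beta^{\mathsf T}\to\alpha$), so progress alone allows horizontal escape. You rightly flag that the measure must be bounded in terms of the width. But your first candidate, distance from the cap edge, grows without bound along a half-strip. A measure bounded by the width is essentially the height $Y$, which stalls on exactly those steps.

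The missing idea is to make the corner certificates prove more than progress. A zero-height transition must strictly lower an auxiliary rank $r(s)\in\{0,1,2\}$. Then $3(K-Y)+r(s)$ is a nonnegative integer that strictly decreases at every step in a half-strip of height $K$ (Theorem~\ref{thm:gunhs}).

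For $\Rep$ the enlarged T is bounded, but the local certificate is valid only under the height-$k$ ceiling of the long arm, not near the concave corner where the stem attaches. The paper therefore also certifies $\Delta x-4\Delta y\le h(s)-h(s')$. This gives $X\le 4Y-h(s)$, and with $Y+w\le k$, $ak-X\ge(a-4)k+4w+h(s)>a+8$, which keeps every queried tile inside the arm (Theorem~\ref{thm:gunrep}). Without these certified inequalities, the half-strip and rep-tile entries of the bent-strip row are unproved.

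A minor slip: since $\BS\Rightarrow\Q\Rightarrow\HP$, refuting $\HP$ already refutes $\Q$, and refuting $\Q$ already refutes $\BS$. Neither needs a separate refutation.
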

\end{minipage}

\begin{corollary}\label{cor:gun}
For every $n\in\N$, $\Pshape n110$ has the rectangle profile if $n\le3$
and the bent-strip profile if $n\ge4$.
\end{corollary}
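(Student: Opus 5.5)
The corollary will follow from Theorem~\ref{thm:classification} once $\Pshape n110$ is put into the normal forms used there. I will split on $n$.

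For $n=0$ the shape is $\Pshape0110$, whose only positive arms are north and west. Reflecting in the line $y=x$ (a square-grid symmetry) gives $\Pshape1100$, the L with $a=b=1$. The theorem assigns this L the rectangle profile, since $b=1$.

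For $n\ge1$ the arms are east $n$, north $1$, west $1$, south $0$. This is the T normal form $\Pshape abc0$ with $a=n$, $b=1$, $c=1$, and the normalization $a\ge c\ge1$ holds because $n\ge1$. We have $b=c=1$, so the theorem applies directly:
\begin{itemize}
\item the row ``T: $b=c=1$, $a\le3$'' gives the rectangle profile for $n=1,2,3$;
\item the row ``T: $b=c=1$, $a\ge4$'' gives the bent-strip profile for every $n\ge4$.
\end{itemize}

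Since the capability profile is invariant under square-grid symmetries, these cases together give the statement.

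The only step needing care is the symmetry reduction. A copy may be any of the eight rotations or reflections, and every region in the table is defined only up to the same symmetry group: rotating a half-strip or bent strip by a grid symmetry and translating yields another region of the same type. Hence each capability is preserved under the normalizations $\Pshape0110\mapsto\Pshape1100$ and under the identity for the T case.

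The substantive content, namely the constructions for $n\le3$ and the obstructions for $n\ge4$, lies entirely inside the theorem. Relative to it, the corollary has no genuine obstacle beyond checking that $n=0$ is a degenerate L rather than a T.
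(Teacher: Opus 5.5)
Your argument is correct and is essentially the paper's. The paper assembles the corollary directly from the ingredients that prove the $b=c=1$ rows of Theorem~\ref{thm:classification}: the rectangle witnesses (including the two-copy L-triomino rectangle for $n=0$), Proposition~\ref{prop:tstrip}, Theorems~\ref{thm:gunhs} and~\ref{thm:gunrep}, and Proposition~\ref{prop:hierarchy}. You instead read the result off the packaged theorem, which is legitimate because that theorem's proof never uses the corollary.

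One small slip: reflecting $\Pshape0110=\{(-1,0),(0,0),(0,1)\}$ in $y=x$ gives $\Pshape1001$, not $\Pshape1100$; use $x\mapsto-x$ or a quarter turn instead. Also, the clean reason every capability (including $\Rep$) is symmetry-invariant is that $P$ and $gP$ have exactly the same set of copies, not that the target regions are closed under grid symmetries.
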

\begin{corollary}\label{cor:planecriteria}
A genuine T in normal form tiles the plane if and only if
$b\le2$, or $c=1$, or $(c=2\text{ and }b=a+3)$.
A genuine cross tiles the plane if and only if two opposite arms have
length one. In particular, four arms all greater than one give a non-tiler.
\end{corollary}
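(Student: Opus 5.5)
The statement to prove is Corollary~\ref{cor:planecriteria}. My plan is to read it off Theorem~\ref{thm:classification}, using only the fact that $\Plane$ is the weakest capability in every profile. In the profile table, the column $\Plane$ is ``yes'' for the rectangle, bent-strip, strip and plane-only profiles, and ``--'' only for the non-tiler profile. So for every tuple, $\Pshape abcd$ tiles $\Z^2$ if and only if its row of the classification is not labelled ``Non-tiler''. It then remains to identify, in each of the two genuine families, exactly which rows are non-tilers and rewrite that condition.

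\emph{Genuine T.} Take the normal form $\Pshape abc0$ with $a\ge c\ge1$ and $b\ge1$. The rows with $b=1$ (rectangle, bent strip or strip, which together cover every $c\ge1$) all tile the plane. When $b\ge2$, the theorem gives plane-only exactly under $c=1$, or $b=2$, or ($c=2$ and $b=a+3$), and non-tiler otherwise. Taking the union of the tiling rows gives $b=1$, or $b=2$, or $c=1$, or ($c=2$ and $b=a+3$). Since $b\ge1$, this is exactly the stated criterion: $b\le2$, or $c=1$, or ($c=2$ and $b=a+3$). I will check that the cases $b=1$ and $b\ge2$ together exhaust the normal forms, so no T tuple is missed.

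\emph{Genuine cross.} Take $a,b,c,d\ge1$. The theorem assigns the plane-only profile exactly when $a=c=1$ or $b=d=1$, and non-tiler otherwise. The pairs $(a,c)$ and $(b,d)$ are the two pairs of opposite arms (east/west and north/south). Hence the cross tiles the plane if and only if some pair of opposite arms both have length one.

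For the final sentence, suppose all four arms exceed one. Then neither $a=c=1$ nor $b=d=1$ holds, so the cross falls in the non-tiler row.

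The only real risk is bookkeeping. One point is making sure that the normal-form reduction for T (via square-grid symmetries, which preserve every capability) matches the phrase ``in normal form''. The other is checking that the disjunction in the T row was transcribed correctly. There is no substantive obstacle, since all the mathematical content sits in Theorem~\ref{thm:classification}.
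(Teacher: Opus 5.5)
Your proposal is correct and follows essentially the paper's own route. The paper gives no separate argument: the corollary is read off the $\Plane$ column of Theorem~\ref{thm:classification}, and your case bookkeeping matches the table exactly (the $b=1$ rows always tile, the plane-only disjunction applies for $b\ge2$, and the opposite pairs $(a,c)$ and $(b,d)$ decide the crosses). The actual mathematical content sits in the assembly of Section~\ref{sec:assembly}, which combines the constructions of Propositions~\ref{prop:tstrip}, \ref{prop:tplane} and~\ref{prop:crosspositive} with the obstructions of Sections~\ref{obs:T-plane} and~\ref{obs:crosses}.
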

The five profiles are exhaustive for this family, not for arbitrary
polyominoes. The proofs below establish the positive constructions and
the separating obstructions; \eqref{eq:hierarchy} then determines each
remaining capability. The normalization of every natural tuple, including
duplicate descriptions of bars, is also part of the formal theorem.

\section{Hierarchy implications and finite certificates}\label{sec:foundations}
\begin{proposition}\label{prop:hierarchy}
All implications in \eqref{eq:hierarchy} hold for the lattice convention
of Section~\ref{sec:classification}.
\end{proposition}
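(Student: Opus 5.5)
Each arrow in \eqref{eq:hierarchy} is proved by a direct construction: starting from a tiling of the stronger region, we build a tiling of the weaker one. Throughout we use only translations of whole tilings and, in a few places, one square-grid symmetry applied to an entire tiling; both preserve the set of allowed copies. The steps, in order, are as follows.

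\textbf{From $\R$ to $\HS$ and $\Rep$.} For $\R\Rightarrow\HS$, we place translates of a $u\times v$ rectangle tiling side by side at horizontal offsets $iu$, $i\in\N$. This tiles $\N\times\{0,\dots,v-1\}$, so $w=v$ works. For $\R\Rightarrow\Rep$, we take $k=uv$ (so $k\ge2$ unless $u=v=1$, which is impossible because a polyomino of area at least one tiling a $1\times1$ rectangle is a single cell; in that case $k=2$ works trivially). Each $k\times k$ block can be partitioned into $u\times v$ rectangles, because $u\mid k$ and $v\mid k$. Each such rectangle is tiled by a translate of the given tiling, and the union over all original cells tiles the enlargement.

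\textbf{From $\HS$ to $\BS$, then to $\Q$ and $\St$.} For $\HS\Rightarrow\BS$ with $\HS$ of width $w$, take $u=v=w$. The bent strip $\{x<w\text{ or }y<w\}\cap\N^2$ splits into the horizontal half-strip $\N\times\{0,\dots,w-1\}$ and the vertical half-strip $\{0,\dots,w-1\}\times\{w,w+1,\dots\}$. The first is tiled by the given tiling. The second is tiled by the image of the given tiling under the reflection $(x,y)\mapsto(y,x)$, followed by the translation $(0,w)$. For $\BS\Rightarrow\Q$, write $B_{u,v}$ for the bent strip. Its translates $B_{u,v}+(ju,jv)$, $j\in\N$, are pairwise disjoint, since the $j$-th one is contained in $\{x\ge ju,y\ge jv\}$ and meets the $(j{+}1)$-st translate's region only in cells excluded from it. Their union is $\N^2$: a cell $(x,y)$ lies in copy $j=\min(\lfloor x/u\rfloor,\lfloor y/v\rfloor)$. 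For $\BS\Rightarrow\St$, we take the horizontal arm of the bent strip: the given tiling restricted to $y<v$ cannot simply be used, because tiles may cross $y=v$. Instead, as in $\BS\Rightarrow\Q$, we nest the same translates to tile $\N^2$, and then pass through $\HP$. This leaves the direct $\St$ route as the delicate step discussed below.

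\textbf{From $\Q$ and $\St$ to $\HP$, and from $\HP$ to $\Plane$.} For $\St\Rightarrow\HP$, we stack translates of a strip tiling at vertical offsets $jw$, $j\in\N$. For $\HP\Rightarrow\Plane$, we combine the half-plane tiling with its image under $(x,y)\mapsto(x,-1-y)$. For $\Q\Rightarrow\HP$, translating does not suffice, so we use a compactness (König) argument. The translates of $\N^2$ by $(-m,0)$ are tiled by shifted copies of the quadrant tiling. On each finite window $[-r,r]\times[0,r]$, only finitely many local patterns of tiles meeting the window occur. A diagonal subsequence then yields a consistent limit tiling of $\Z\times\N$. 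This is the step I expect to be the main obstacle, because it requires formalizing the limit argument (finitely many copies meet any cell, and disjointness and coverage pass to the limit). The same compactness device also gives $\BS\Rightarrow\St$: the horizontal arm $\N\times\{0,\dots,v-1\}$ region of translates $B_{u,v}+(-m,0)$ converges to a tiling of a region containing $\Z\times\{0,\dots,v-1\}$. The limit's tiles stay within the bent strip, which near $x\to-\infty$ becomes the strip $\Z\times\{0,\dots,v-1\}$. Hence the limit tiles exactly that strip, completing the proof.
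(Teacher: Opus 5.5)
Your proposal follows the paper for $\R\Rightarrow\HS$, $\R\Rightarrow\Rep$, $\HS\Rightarrow\BS$, $\BS\Rightarrow\Q$ (via the layer index $\min(\lfloor x/u\rfloor,\lfloor y/v\rfloor)$) and $\St\Rightarrow\HP$. It also uses the paper's finite-choice limit for $\Q\Rightarrow\HP$ and $\BS\Rightarrow\St$. Your reflection $(x,y)\mapsto(x,-1-y)$ for $\HP\Rightarrow\Plane$ is valid and more elementary than the paper's limit argument.

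The genuine gap is that you never prove $\Rep\Longrightarrow\Q$, although it is one of the arrows of \eqref{eq:hierarchy}. This is not a formality. The classification uses it to deny $\Rep$ to every shape without a quadrant tiling: all L shapes with $b\ge2$, all T shapes other than $\Pshape a110$, and all genuine crosses.

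Here is what is missing. A single rep-tiling at scale $k$ contains no large quadrant patch, so you first need arbitrarily large scales.
Enlarge a tiling of the $k$-fold enlargement $P_k$ by the factor $k$. This partitions $P_{k^2}$ into congruent copies of $P_k$. Each of these is tiled by copies of $P$ after transporting the original rep-tiling by the relevant lattice symmetry. Iterating gives tilings of every $P_{k^n}$.

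Next fix a convex corner vertex of $P$: one at which exactly one of the four incident cells lies in $P$, for instance the lower-left vertex of the leftmost cell in the lowest row. Apply a symmetry and a translation putting that vertex at the origin with its cell in $\N^2$. Then the enlarged region satisfies $P_{k^n}\cap[-k^n,k^n)^2=[0,k^n)^2$.

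Suppose every copy has coordinate spread at most $D$ and $k^n>r+D$. Then every tile meeting $[0,r]^2$ lies in $[0,k^n)^2\subseteq\N^2$, and every cell of $[0,r]^2$ is covered. Your diagonal-subsequence argument then yields a tiling of $\N^2$.

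Separately, delete the detour in your $\BS\Rightarrow\St$ paragraph that nests translates into a quadrant ``and then passes through $\HP$''. Since $\HP$ does not imply $\St$, that route says nothing about strips. The compactness argument at the end of your proposal, moving far along the horizontal arm, is the correct proof and is the paper's.
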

\begin{proof}
Repeated rectangles give a half-strip. A half-strip of width $w$ and
a rotated copy beginning at $(0,w)$ give a bent strip with widths $w,w$.
For a bent strip of widths $u,v$, its translates by $k(u,v)$,
$k\in\N$, partition the quadrant: the unique layer containing $(x,y)$
has index $\min(\lfloor x/u\rfloor,\lfloor y/v\rfloor)$.
A full strip gives a half-plane by parallel repetition.

For the remaining infinite-region implications use finite local choice.
Only finitely many copies of a fixed finite tile contain a specified
cell. Translate a sequence of tiled regions so that increasingly large
finite patches of the target region are covered and its required boundary
is respected. Successively restrict to subsequences that agree on every
larger patch. The consistent limit consists of whole copies, remains
disjoint, and covers every target cell. Moving far down one arm of a
bent strip gives a strip; moving far along the boundary of a quadrant
gives a half-plane; moving into the interior of a half-plane gives the
plane. This argument asserts existence of a limiting tiling and does
not assert periodicity.

If an $r\times s$ rectangle is tileable, choose an integer $k>1$ divisible
by both $r$ and $s$. Every $k\times k$ block is tiled by such rectangles,
so replacing each cell of the prototile by a block proves $\Rep$.
Conversely, iterate a rep-tiling and place a fixed convex corner of each
successively enlarged tile at the origin. For any prescribed bounded
quadrant patch, sufficiently large enlargements contain the patch and
have the two required adjacent boundary segments. Since tile diameters
are bounded, a copy meeting the prescribed patch cannot reach past
either of those increasingly long boundary segments. The same finite-choice
argument gives a quadrant tiling.
\end{proof}

\subsection{What a negative certificate proves}
A finite search box is not itself an obstruction to an infinite tiling:
a tile may extend beyond the box. Our certificates instead query cells
of a hypothetical actual tiling and retain every cell of each selected
copy. The arithmetic parameters may be fixed or range over an explicitly
specified unbounded domain.

\begin{theorem}[Soundness of finite obstruction trees]\label{thm:certificate-soundness}
Fix a parameter domain, a region, and a finite collection of whole
selected tiles required to occur in a hypothetical exact tiling. Suppose
there is a finite rooted tree with the following properties for every
parameter choice in the domain. Each nonterminal node specifies a cell
of the region not covered by its selected tiles. Its children exhaust
every congruent whole tile through that cell which is contained in the
region and disjoint from the selected tiles. Each child adds that tile.
Every terminal node has a proved contradiction, either no possible cover
or a local obstruction whose hypotheses hold for the selected tiles.
Then no exact tiling contains the root configuration.
\end{theorem}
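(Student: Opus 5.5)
The plan is to argue by contradiction and descend the tree. Fix a parameter choice in the domain, and suppose $\mathcal T$ is an exact tiling of the region that contains every tile selected at the root. We will maintain the following invariant: the current node $\nu$ has all of its selected tiles belonging to $\mathcal T$. This holds at the root by hypothesis.

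At a nonterminal node $\nu$ satisfying the invariant, the node specifies a cell $z$ of the region not covered by the selected tiles of $\nu$. Since $\mathcal T$ is exact, $z$ lies in some tile $t\in\mathcal T$. Three facts about $t$ are immediate:
\begin{itemize}
\item $t$ is a congruent copy of the prototile;
\item $t$ is contained in the region, because tiles of a tiling lie in the region;
\item $t$ is disjoint from every selected tile of $\nu$. Each selected tile is a member of $\mathcal T$, and distinct members of $\mathcal T$ are disjoint. Moreover $t$ itself is not a selected tile, since $z$ is uncovered by the selected tiles while $z\in t$.
\end{itemize}
By the exhaustiveness hypothesis, $t$ equals the tile added at some child $\nu'$ of $\nu$. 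The selected tiles of $\nu'$ are those of $\nu$ together with $t$, all in $\mathcal T$, so the invariant passes to $\nu'$.

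Iterating this step gives a path from the root on which every node satisfies the invariant. Because the tree is finite, the path terminates at a terminal node $\tau$, whose selected tiles all lie in $\mathcal T$. We then check the two kinds of terminal contradiction.
\begin{itemize}
\item \emph{No possible cover.} This case is the exhaustiveness clause with an empty set of children. The designated cell has no admissible tile through it, yet the argument above produced one, namely the tile of $\mathcal T$ covering that cell. This is a contradiction.
\item \emph{Local obstruction.} Here the lemma's hypotheses are stated only in terms of the selected tiles. Those tiles are genuine members of an exact tiling, so the hypotheses hold for $\mathcal T$ and the lemma's conclusion contradicts the existence of $\mathcal T$.
\end{itemize}
Since the parameter choice was arbitrary, no exact tiling contains the root configuration for any parameter in the domain.

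The main point needing care, and the one I expect to be the real obstacle, is making precise that ``children exhaust every congruent whole tile'' quantifies over all copies of the shape, not merely those inside some search window. The argument relies on keeping entire tiles rather than truncating them to a box; otherwise the chosen $t$ might not match any child. With symbolic parameters, the tree's shape must be uniform over the domain, while the case list at each node may depend on the parameters. So the exhaustiveness claim at each node must be a proved statement for all parameter values. We will simply take it as a hypothesis and use it pointwise, which is all the argument needs.

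No compactness or choice principle is required, because we follow one path determined by the fixed tiling $\mathcal T$. In particular the argument never constructs a tiling; it only refutes one.
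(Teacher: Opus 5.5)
Your proposal is correct and matches the paper's proof: fix an admissible parameter tuple, let the actual tiling's tile covering each queried cell select a child via exhaustiveness, and reach a terminal contradiction. Your root-to-leaf descent along the path the tiling determines is the path-following form of the paper's ``induction on the finite tree, starting at its leaves''.
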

\begin{proof}
Assume such a tiling exists. At a nonterminal node its queried cell has
a unique covering tile in the tiling. That tile satisfies the containment
and disjointness conditions, so exhaustiveness gives a child whose
selected tiles also belong to the tiling. Induction on the finite tree,
starting at its leaves, contradicts the terminal obstruction. For symbolic
parameters fix an arbitrary admissible tuple first; the same induction
then applies because each branch implication was proved throughout the
domain.
\end{proof}

An analogous certificate may terminate in a specified successor
configuration instead of a contradiction. The same induction proves the
local transition statement. Section~\ref{sec:guns} combines such
transitions with a well-founded rank. This distinction is necessary:
a finite transition certificate is not by itself an infinite-region
obstruction.

The certificates enumerate all eight orientations and allow an arbitrary
integer junction for the covering tile. A proposed child can be impossible
for some parameter values; that branch is then vacuous, not an omitted
case. Search and external solvers propose the trees and arithmetic claims.
The formal development checks the arithmetic and proves its connection
to copies, containment, coverage, and disjointness in an actual tiling.
The finite symbolic trees used later are explicit parts of the proof,
available in the versioned artifacts. A named computer-assisted proposition
below therefore refers to a supplied exhaustive proof, not to unsuccessful
testing over finitely many parameter values.

\section{Explicit positive constructions}\label{sec:constructions}
A lattice construction below specifies finitely many copies and a rank-two
translation lattice. It proves a plane tiling when the cells of the copies
represent each class of $\Z^2/\Lambda$ exactly once. This verifies both
coverage and nonoverlap, not just agreement of areas. Write
$\sigma(x,y)=(y,x)$ and $-T=\{(-x,-y):(x,y)\in T\}$.

\subsection{Bars and small rectangle cases}
A bar is already a rectangle. For $\Pshape1110$ and $\Pshape2110$ the
following partitions give rectangles; each letter denotes one complete
copy. Checking the cells of each letter gives the required congruences.
\begin{center}
\begin{tabular}{cc}
$\Pshape1110$&$\Pshape2110$\\[3pt]
\begin{tabular}{@{}l@{}}\ttfamily BCCC\\\ttfamily BBCD\\\ttfamily BADD\\\ttfamily AAAD\end{tabular}&
\begin{tabular}{@{}l@{}}\ttfamily BBBBEHHHHJ\\\ttfamily ABEEEEHGJJ\\\ttfamily ADDDDGGGGJ\\\ttfamily AACDFFFFIJ\\\ttfamily ACCCCFIIII\end{tabular}
\end{tabular}
\end{center}
The historical $24\times23$ rectangle of 92 Y hexominoes provides the
$\Pshape3110$ case~\cite{reidY,dahlke1989y}; see Figure~\ref{fig:yrect}.
The formal artifact checks all 552 cells exactly once against the
explicit 92-copy witness. Only existence is needed for this classification.
\begin{figure}[htbp]\centering
\includegraphics[width=.64\linewidth]{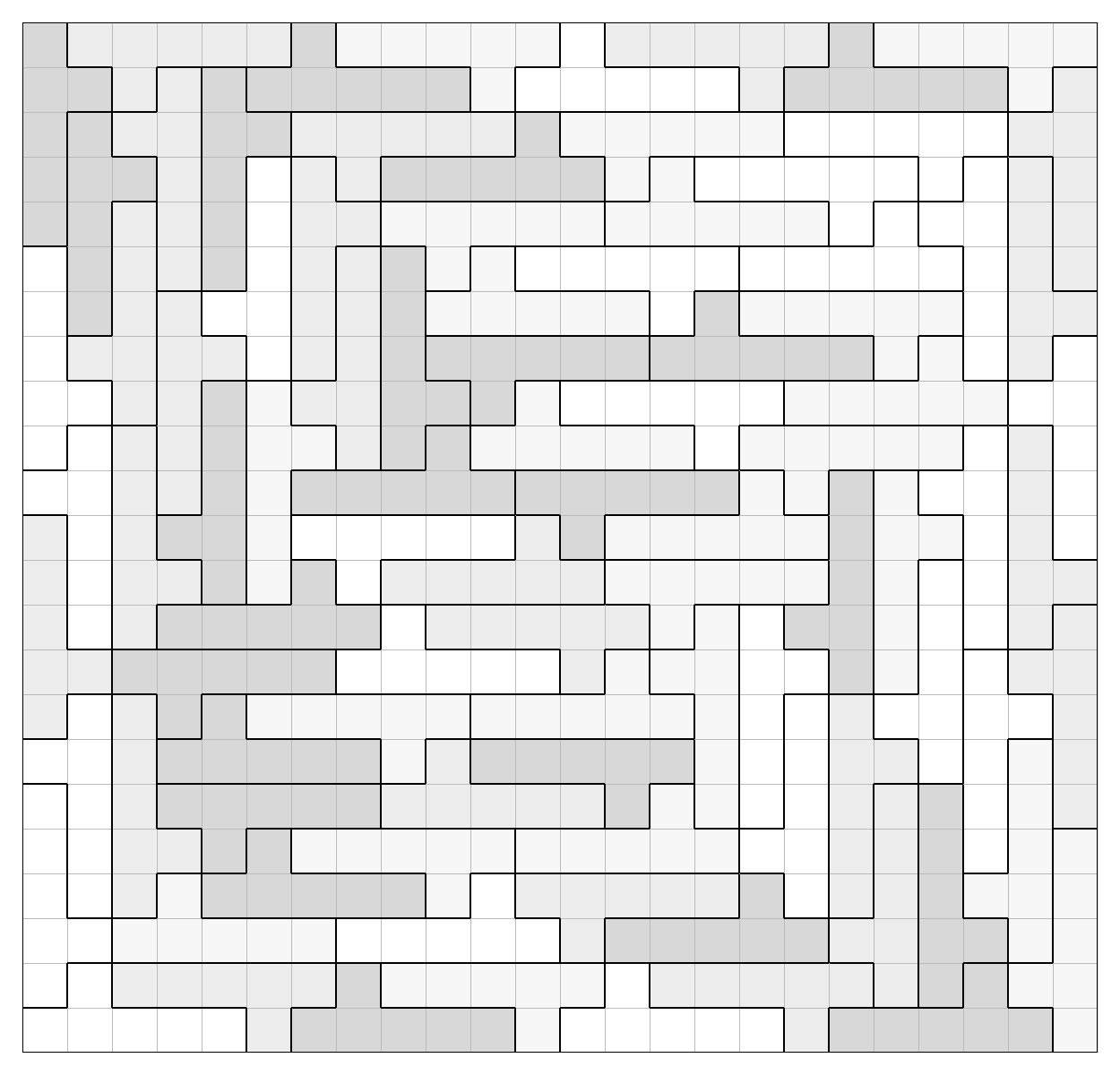}
\caption{A cell-coordinate reconstruction of the Y-hexomino rectangle
recorded by Reid and attributed to Marlow and Dahlke. Each region is one
copy of $\Pshape3110$.}
\label{fig:yrect}
\end{figure}

\subsection{Strips and bent strips for T shapes}
\begin{proposition}\label{prop:tstrip}
Every $T=\Pshape a1c0$ with $a,c\ge1$ tiles a strip of width two.
If $c=1$, it also tiles a bent strip of widths two and two.
\end{proposition}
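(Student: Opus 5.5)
The plan is to build everything from one two-copy brick and then check the cells row by row. Let $T=\Pshape a1c0$. Its bar occupies row $0$ on $-c\le x\le a$, and its stem is the cell $(0,1)$. Pair $T$ with its $180^\circ$ rotation $-T$, translated so that the stem of the rotated copy is the cell $(a+1,0)$ and its bar lies in row $1$. That rotated copy occupies $(a+1,0)$ together with the row-$1$ cells $1\le x\le a+c+1$.

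The union $U$ of the two copies therefore covers:
\begin{itemize}
\item row $0$ on the interval $[-c,\,a+1]$;
\item row $1$ on the interval $[0,\,a+c+1]$.
\end{itemize}
Both intervals have length $L=a+c+2$, and the copies are disjoint because their cells lie in different positions in each row. The translates $U+k(L,0)$, $k\in\Z$, cut each of the two rows into consecutive intervals of length $L$. They therefore partition $\Z\times\{0,1\}$, which is the width-two strip claimed in Proposition~\ref{prop:tstrip}. Because the check is done row by row, it proves both coverage and nonoverlap, not just agreement of areas.

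For $c=1$ the brick is a staircase: row $1$ is row $0$ shifted one cell east. Taking only $k\ge0$, with a suitable starting translate, gives a half-strip $\N\times\{0,1\}$ with a ragged end, where a single cell is missing at the start of one row. Applying $\sigma$ gives the analogous vertical chain in columns $0,1$. The bent strip with widths $2,2$ is then the horizontal chain, plus the vertical chain, plus a finite corner patch filling the $2\times2$ junction and the ragged ends. I would fix the corner first and let the chains follow. One concrete choice is a copy whose bar is column $0$ from $y=0$ to $y=a+1$, with its stem at $(1,1)$. The horizontal chain can then start with row $0$ at $x=1$ and row $1$ at $x=2$, which has exactly the required unit offset.

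The main obstacle is matching the phases on the vertical side. After this corner copy, column $0$ resumes at $y=a+2$ while column $1$ resumes at $y=2$. The vertical chain, however, needs its two column starts to differ by exactly $\pm1$ modulo $L=a+4$, and here they differ by $a$. The plan is to absorb this mismatch with a few extra copies placed near the corner. Possible choices include:
\begin{itemize}
\item one rotated copy lying horizontally in rows $0,1$ next to the corner copy;
\item a second copy in columns $0,1$ whose bar starts at $y=2$ in column $1$.
\end{itemize}
I would search small configurations with $a$ symbolic, in the spirit of the lattice checks of Section~\ref{sec:constructions}, until both the row starts and the column starts have the staircase offset.

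Once such a corner is fixed, correctness reduces to a finite check: each of the four half-rows and half-columns must be partitioned into intervals. This can be verified directly, and then $\BS$ holds with widths $2,2$.
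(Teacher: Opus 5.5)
Your strip construction is correct and is the paper's: your brick is the paper's pair translated by $(-c,0)$, and your $L=a+c+2$ is the paper's period $m$.

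\textbf{The gap.} The bent-strip claim is not proved. You never exhibit a corner, and ``search small configurations until the offsets match'' is a plan, not an argument. The missing observation is that no corner patch is needed. For $c=1$, your chain with starting translate $(1,0)$ and $k\ge0$ is $J=\{(x,0):x\ge0\}\cup\{(x,1):x\ge1\}$. The transposed chain moved up one row, $\sigma J+(0,1)=\{(0,y):y\ge1\}\cup\{(1,y):y\ge2\}$, interlocks with it directly:
the cell $(0,1)$ missing from $J$ is the first cell of column $0$ of the vertical chain, and the cell $(1,1)$ missing from the vertical chain lies in $J$. The two sets are disjoint, and their union is exactly $\{(x,y)\in\N^2:x<2\text{ or }y<2\}$. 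This is the paper's proof. The ``ragged ends'' you set out to repair are precisely what makes the two chains fit together.

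\textbf{Completing your own corner.} Your corner can also be finished, and no search is needed because the next copy is forced. Place your corner copy and the horizontal chain $J+(1,0)$. Then the cell $(1,2)$ can only lie on a vertical bar in column $1$ starting at $y=2$. A horizontal bar cannot fit in the width-two columns, and each possible junction adjacent to $(1,2)$ is occupied or forces a bar out of the region. That bar runs to $y=a+3$. Its stem must point west, since column $2$ is outside the region, so it lies at $(0,3)$ or $(0,a+2)$. The cell $(0,3)$ is occupied when $a\ge2$, and the two positions coincide when $a=1$. Columns $0$ and $1$ then resume at $y=a+3$ and $y=a+4$, which is the exact staircase offset, so $\sigma J+(0,a+3)$ completes the tiling.

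Two small slips: for $c=1$ your period is $L=a+3$, not $a+4$. Also, a half-infinite chain needs its two column starts to differ by exactly one, not merely modulo $L$.
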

\begin{proof}
Put $L=a+c+1$ and $m=L+1$. For each $j\in\Z$ place
\[
T+(jm+c,0),\qquad -T+(jm+L,1).
\]
On row zero the first tile covers $jm,\ldots,jm+L-1$ and the second
the cell $jm+L$. On row one the first contributes $jm+c$ and the second
$jm+c+1,\ldots,jm+c+L$. Each row is a disjoint partition into successive
blocks of length $m$; every tile remains in the two rows.

For $c=1$ restrict to $j\ge0$. The tiled region is
\[
J=\{(x,0):x\ge0\}\cup\{(x,1):x\ge1\}.
\]
Its reflected translate $\sigma J+(0,1)$ is
$\{(0,y):y\ge1\}\cup\{(1,y):y\ge2\}$. These regions are disjoint
and their union is $\{(x,y)\in\N^2:x<2\text{ or }y<2\}$.
\end{proof}
\begin{figure}[htbp]\centering
\includegraphics[width=.92\linewidth]{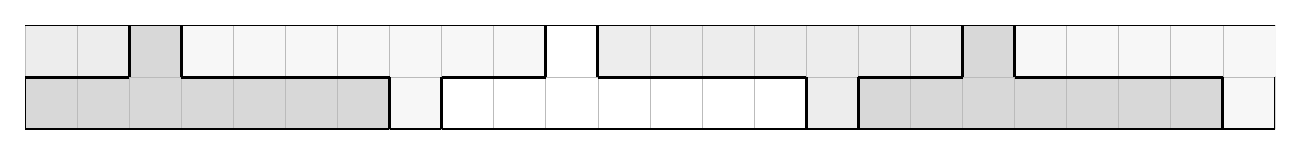}\\[8pt]
\includegraphics[width=.64\linewidth]{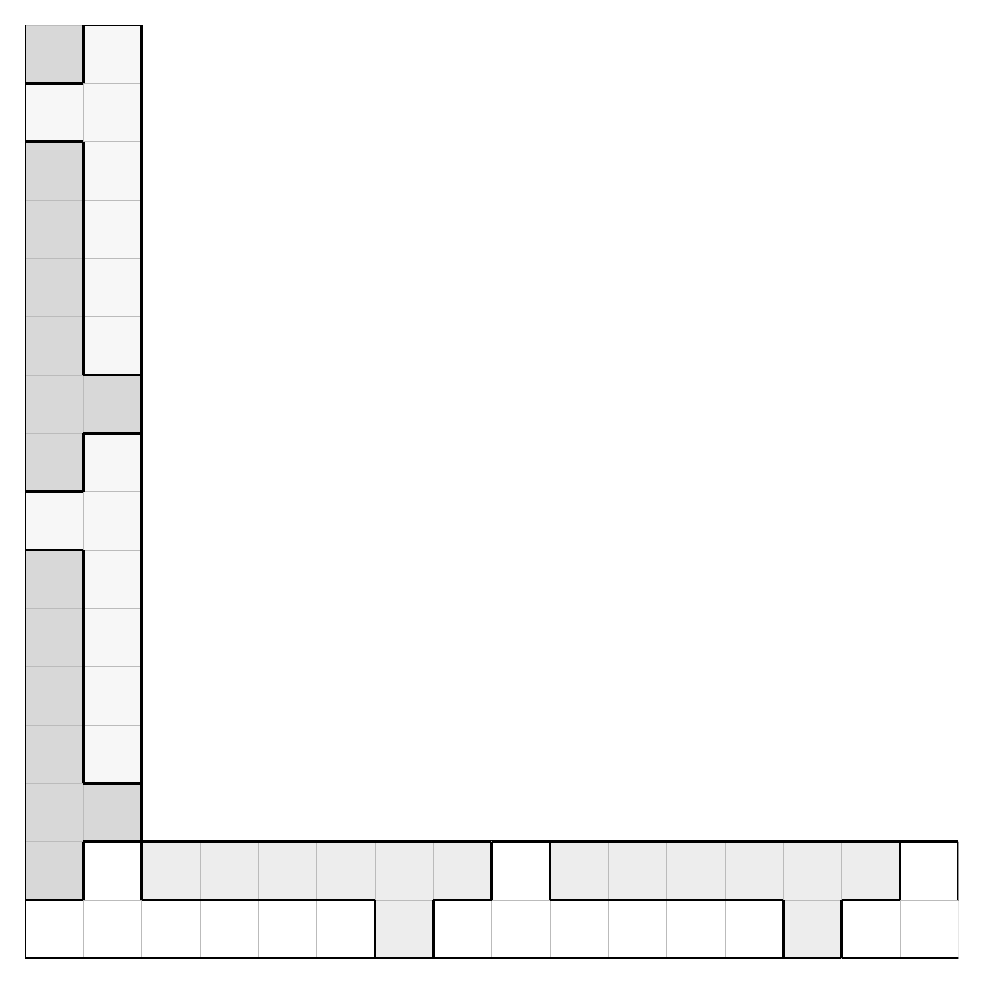}
\caption{A strip for $\Pshape4120$ (top) and a bent strip for
$\Pshape4110$ (bottom). The drawings are cropped; the cut ends do not
assert finite rectangle tilings.}
\label{fig:strips}
\end{figure}

\subsection{Three plane constructions for T shapes}
\begin{proposition}\label{prop:tplane}
A genuine T in normal form tiles the plane whenever $c=1$, $b=2$, or
$(c=2,\ b=a+3)$.
\end{proposition}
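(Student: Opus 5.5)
The plan is to give, for each of the three cases, an explicit lattice construction in the sense fixed at the start of this section: finitely many copies of $T=\Pshape ab c0$ together with a rank-two lattice $\Lambda$ whose classes are hit exactly once by the cells of those copies. Each construction will be checked by a linear residue map. I will look for an integer form $f(x,y)=x+ky$, reduced modulo an integer $N$, with $\Lambda=\ker(f \bmod N)$. Then $\Z^2/\Lambda\cong\Z/N$, and the required partition property reduces to a statement about finitely many arithmetic progressions of residues.

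The first step is to check whether a single copy already suffices. With one copy, $N$ must equal the area $a+b+c+1$. The horizontal bar contributes the consecutive residues $-c,\dots,a$. The stem contributes $k,2k,\dots,bk$, and these must fill exactly the complementary block $\{a+1,\dots,N-c-1\}$. Quick checks for $c=1$ and for $b=2$ suggest that this fails in general, because the stem progression starts next to residue $0$. So I would move at once to two copies, $T$ and $-T$ (or $T$ and $\sigma T$). These are placed so that they interlock, like the $T,-T$ pairing in Proposition~\ref{prop:tstrip}, and the lattice then has index $N=2(a+b+c+1)$.

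For $c=1$ the pattern I would try first stacks the strip of Proposition~\ref{prop:tstrip} vertically. Each horizontal bar of length $a+2$ carries a stem of length $b$. The copies of $-T$ hang their stems downward into the gaps left between consecutive upward stems. The horizontal period should be $a+2$ plus the space the stems use, and the vertical period $b+1$ with a shear chosen so that the stems alternate. I would fix the offsets by requiring the two stem progressions, together with the two bar blocks, to tile $\Z/N$.

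For $b=2$ the stems are short, so rows of bars of length $a+c+1$ can be offset by one cell. Each two-cell stem then sits in the one-cell gap between bars of the next row and the row after, and a sheared lattice of index $a+c+3$ per copy should work. In both of these cases I would write down $f$, $N$, and the junction positions in closed form in $a,b,c$. The verification is then a short residue calculation, split only by the sign of a few differences.

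The main obstacle is the case $c=2$, $b=a+3$. The exact relation between the stem and bar lengths signals a rigid interlocking: the stem of one copy must have exactly the length of the bar plus the junction offset of a neighbour, so that a rotated copy fits against it flush. I would first search numerically for small $a$, say $a=2,3,4$, for a two- or four-copy lattice tiling, using the residue criterion as the test. From those examples I would read off how the junctions and the lattice generators depend on $a$. I would then check the resulting residue identities symbolically for all $a\ge2$. If no periodic pattern with few copies appears, I would consider a construction that first tiles an infinite strip-like slab and then stacks slabs with a shift. Failing that, I would leave the case to a finite symbolic verification of the kind permitted by Theorem~\ref{thm:certificate-soundness}'s accompanying artifacts.
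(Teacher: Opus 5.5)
Your proposal has the right kind of proof in mind: explicit lattice tilings checked by residues, as in the paper. But it never exhibits a tiling, and since the statement asserts existence for three infinite families, the constructions \emph{are} the proof.

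Your ansatz $\Lambda=\ker(x+ky\bmod N)$ is also narrower than needed. It forces $(-k,1)\in\Lambda$, so every row is a translate of every other. For $c=1$ this is harmless. The paper's tiling lies in your class: copies $T$ and $-T+(2a+1,0)$, form $x-2y\bmod 2m$ with $m=a+b+2$, i.e.\ $\Lambda=\langle(2m,0),(2,1)\rangle$. The crossbars give $-1,\ldots,2a+2$, and the two stems fill the remaining even and odd residues respectively. Your picture of stacking the $b=1$ strip with vertical period $b+1$ does not describe it; for $b\ge2$ a genuine T tiles no strip at all, by Theorem~\ref{obs:T-halfplane} and \eqref{eq:hierarchy}.

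For $b=2$ the ansatz does exclude the paper's tiling. The paper uses $T$ and $-T+(a+1,1)$ over $\Lambda=\langle(m,0),(c+1,2)\rangle$ with $m=a+c+3$. Every vector there has even $y$-component: even rows carry the upright crossbars and odd rows the inverted ones, with coordinates $(y\bmod2,\ x-\lfloor y/2\rfloor(c+1)\bmod m)$. Inside your ansatz, two copies with horizontal crossbars cannot work when $c\ge2$. Take $N=2(a+c+3)$ and $T$ at the origin. The two crossbar intervals leave two gaps of total size four, and these must contain the stem residues $k$ and $2k$. This forces $c=2$, $k\equiv\pm3$, and $\{k,2k\}$ to be the two ends of a single four-cell gap. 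The two leftover residues are then adjacent, so they cannot be the other copy's stem pair, which differ by $\pm k$.

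The case $c=2$, $b=a+3$ is where a genuine idea is required, and you defer it to a numerical search. The relevant coincidence is $b=a+3=a+c+1$: the stem is exactly as long as the crossbar, which is what allows a motif mixing upright and transposed copies. The paper uses the four copies $T$, $-\sigma T+(b-2,-2)$, $-T+(b-1,1)$, $\sigma T+(1,3)$ over $\Lambda=\langle(4,4),(b,-b)\rangle$, of index $8b$. It checks them in the coordinates $\delta=(x-y)\bmod 2b$ and $\eta=y+bq\bmod4$, where $q=\lfloor (x-y)/(2b)\rfloor$. When $b$ is even, every vector of this lattice has even $y$-component, so it too lies outside your ansatz.

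Your fallbacks do not close the gap. A tiled straight slab $\Z\times\{0,\ldots,w-1\}$ is a strip tiling, which is excluded for $b\ge2$ as above; with ragged slabs the plan merely restates the search. And Theorem~\ref{thm:certificate-soundness} certifies obstructions, not existence: a symbolic check can verify a parametric motif only after the motif has been found.
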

\begin{proof}
\emph{A unit crossbar arm.} Set $T=\Pshape ab10$, $m=a+b+2$, and
$\Lambda=\langle(2m,0),(2,1)\rangle$. Repeat the two copies
$T$ and $-T+(2a+1,0)$ over $\Lambda$. The quotient coordinate is
$x-2y\pmod{2m}$. The two horizontal bars supply $-1,\ldots,2a+2$.
The first stem supplies the remaining even residues
$2a+4,2a+6,\ldots,2m-2$; the second supplies the remaining odd residues
$2a+3,2a+5,\ldots,2m-3$. These sets partition all $2m$ residues.

\emph{A two-cell stem.} Set $T=\Pshape a2c0$, $m=a+c+3$, and
$\Lambda=\langle(m,0),(c+1,2)\rangle$. Repeat $T$ and
$-T+(a+1,1)$. Use quotient coordinates
\[
j=y\bmod2,\qquad u=x-\lfloor y/2\rfloor(c+1)\pmod m.
\]
For $j=0$ the first bar supplies $[-c,a]$, its upper stem endpoint
supplies $-c-1$, and the second stem supplies $a+1$. These are $m$
consecutive residues. For $j=1$ the second bar supplies $1,\ldots,m-2$;
the two remaining stem cells supply $0,m-1$.

\emph{The four-copy family.} Set $b=a+3\ge5$ and $T=\Pshape ab20$.
Repeat the four copies
\begin{equation}\label{eq:fourcopy}
T,\quad -\sigma T+(b-2,-2),\quad -T+(b-1,1),\quad \sigma T+(1,3)
\end{equation}
over $\Lambda=\langle(4,4),(b,-b)\rangle$. Each tile has $2b$ cells,
and the lattice index is $8b$. Put
$d=x-y$, $q=\lfloor d/(2b)\rfloor$, $\delta=d-2bq$, and
$\eta=y+bq\pmod4$. These are coordinates on the quotient.
Dividing each tile into its bar (including the junction) and its stem
(excluding the junction) gives the following four representatives at
each $\delta$; the entries are $y$ coordinates after reducing $d$ to
$\delta$, before reduction modulo four.
\begin{center}\small
\begin{tabular}{@{}ll@{}}\toprule
$\delta$&Four representatives\\\midrule
$0$&$0,-2,3,1$\\
$1\le\delta\le b-3$&$0,-2,1,3$\\
$b-2$&$-2,1,3,0$\\
$b-1$&$-2,1,-1,0$\\
$b$&$1,0,-2,-1$\\
$b+1\le\delta\le2b-3$&$b-\delta,b-2-\delta,b-1-\delta,b+1-\delta$\\
$2b-2$&$-b,2-b,1-b,3-b$\\
$2b-1$&$-b,3-b,1-b,2-b$\\\bottomrule
\end{tabular}
\end{center}
Every row is a complete set modulo four; the listed intervals exhaust
$0\le\delta<2b$. This verifies \eqref{eq:fourcopy} for all $a\ge2$.
\end{proof}
\begin{figure}[htbp]\centering
\includegraphics[width=.70\linewidth]{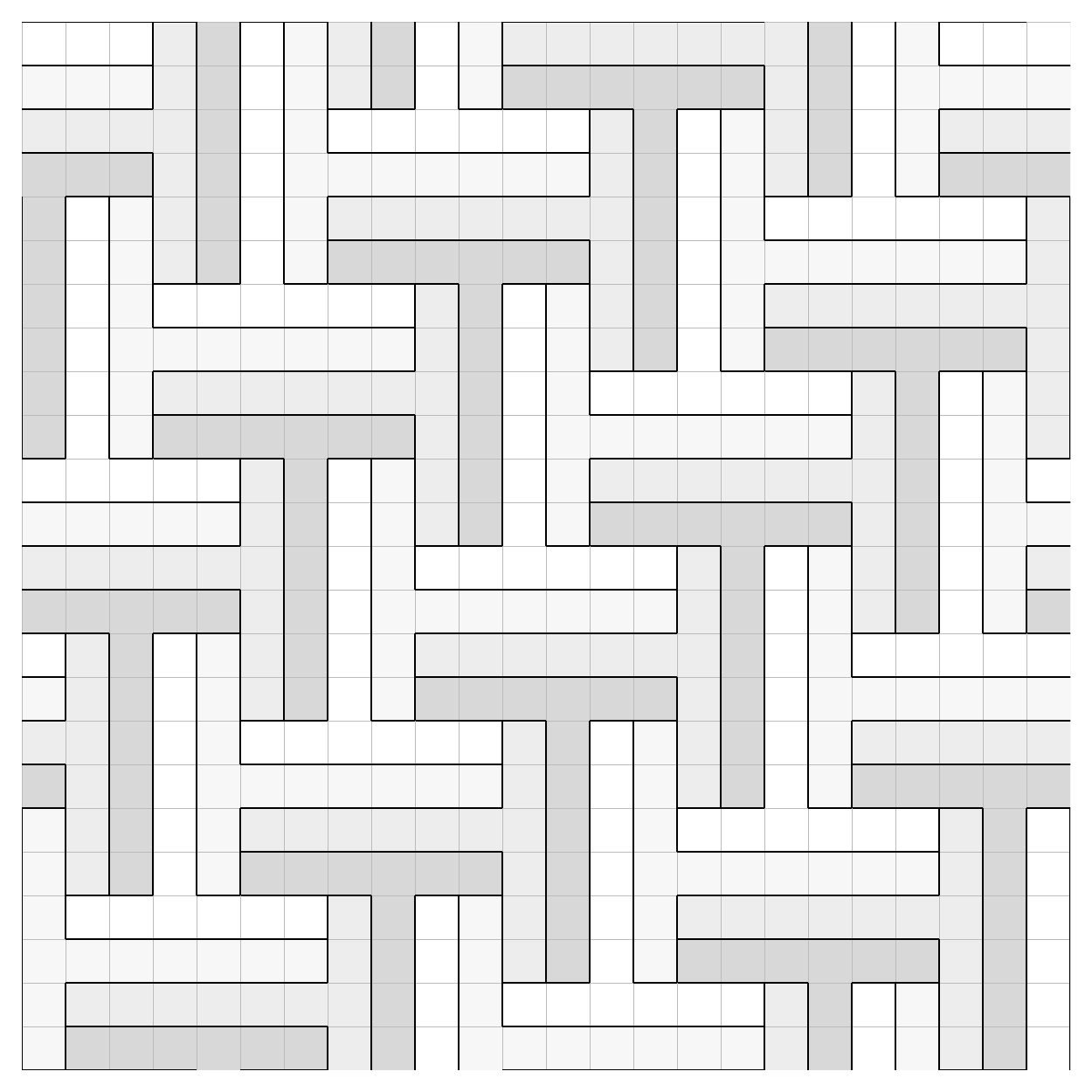}
\caption{The four-copy plane construction for $\Pshape3620$. Boundary
pieces are cropped by the drawing window.}
\label{fig:fourcopy}
\end{figure}

\subsection{Crosses with opposite unit arms}
\begin{proposition}\label{prop:crosspositive}
For $a,c\ge1$, $C=\Pshape a1c1$ tiles the plane.
\end{proposition}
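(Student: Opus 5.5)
The plan is a two-copy lattice construction in the style of Proposition~\ref{prop:tplane}: choose a rank-two lattice $\Lambda$ and two copies, and check that their cells represent each class of $\Z^2/\Lambda$ exactly once. Put $m=a+c+3$, the area of $C$. A single copy over a lattice $\langle(m,0),(s,1)\rangle$ works only when $m\mid 2a+3$, so I use a lattice of index $2m$ that pairs rows, as in the two-cell-stem construction. Take
\[
\Lambda=\langle(m,0),(-1,2)\rangle ,
\]
which has index $2m$, with quotient coordinates
\[
j=y\bmod 2,\qquad u=x+\lfloor y/2\rfloor\pmod m .
\]
These are well defined: adding $(m,0)$ changes $u$ by $m$, and adding $(-1,2)$ changes $x$ by $-1$ and $\lfloor y/2\rfloor$ by $+1$. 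Each class of $\Z^2/\Lambda$ therefore corresponds to exactly one pair $(j,u)\in\{0,1\}\times\Z/m$.

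The two copies are $C$ and $-C+(a+1,1)$, each of area $m$. The rotated copy $-C$ is $\Pshape c1a1$.
\begin{itemize}
\item The bar of $C$ lies in row $y=0$, so it gives $j=0$ and the residues $u\in\{-c,\ldots,a\}$.
\item The stem cell $(0,1)$ of $C$ gives $(j,u)=(1,0)$.
\item The stem cell $(0,-1)$ of $C$ has $\lfloor -1/2\rfloor=-1$, so it gives $(1,-1)$.
\item The second copy has junction $(a+1,1)$. Its bar occupies $x\in\{a+1-a,\ldots,a+1+c\}=\{1,\ldots,c+a+1\}$ in row $y=1$. This gives $j=1$ and $u\in\{1,\ldots,m-2\}$.
\item Its stem cell $(a+1,0)$ gives $(0,a+1)$.
\item Its stem cell $(a+1,2)$ has $\lfloor 2/2\rfloor=1$, so it gives $(0,a+2)$.
\end{itemize}

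Now collect the classes by $j$. For $j=0$ the residues are $-c,\ldots,a,a+1,a+2$. These are $a+c+3=m$ consecutive integers, hence a complete residue system modulo $m$. For $j=1$ the residues are $1,\ldots,m-2$ together with $0$ and $-1\equiv m-1$, again a complete system. So each of the $2m$ classes is hit exactly once. By the lattice criterion stated at the start of Section~\ref{sec:constructions}, the translates of the two copies over $\Lambda$ are pairwise disjoint and cover $\Z^2$. This gives a plane tiling for all $a,c\ge1$.

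The only real obstacle is finding the lattice and the offset of the second copy. The one-copy attempt fails because the two stem residues $\pm s$ must equal the two adjacent gap residues $a+1,a+2$, which forces $2a+3\equiv0\pmod m$. The two-row lattice resolves this: one copy's stem fills the other copy's bar row. The rotation $-C$ is needed so that the gaps in row $j=1$ land exactly at $\{0,-1\}$, where the stems of $C$ fall. The remaining verification is the routine floor bookkeeping for $y=-1$ and $y=2$ carried out above. For a formal version it suffices to check the displayed residue lists symbolically in $a$ and $c$.
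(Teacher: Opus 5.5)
Your construction is correct and is essentially the paper's: the paper uses $C$ and $-C+(a+2,1)$ over $\langle(m,0),(1,2)\rangle$ with the same row-pairing quotient coordinates $(j,u)$. Your tiling is exactly the mirror image of the paper's under $y\mapsto -y$: both copies are invariant under that reflection, and $-C+(a+2,-1)$ differs from your $-C+(a+1,1)$ by the lattice vector $(1,-2)$. Your residue bookkeeping, including the floor values at $y=-1$ and $y=2$, checks out.
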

\begin{proof}
Put $m=a+c+3$ and repeat $C$ and $-C+(a+2,1)$ over
$\Lambda=\langle(m,0),(1,2)\rangle$.
Use quotient coordinates $j=y\bmod2$ and
$u=x-(y-j)/2\pmod m$.
For even parity the first bar supplies $-c,\ldots,a$, and the second
tile's tips supply $a+1,a+2$. For odd parity the second bar supplies
$2,\ldots,m-1$, and the first tile's tips supply $0,1$. Both parity
classes are partitioned exactly once.
\end{proof}

\subsection{L constructions}\label{sec:lpositive}
The positive L families are those in the author's half-plane
classification~\cite{raychevHalfPlane2021}. Here are explicit witnesses
in the arm convention.

For $L=\Pshape a100$, the copies $L$ and $-L+(a+1,1)$ partition an
$(a+2)\times2$ rectangle. Every $L=\Pshape ab00$ tiles the plane:
under $x-y\pmod{a+b+1}$ its cells give the consecutive residues
$-b,\ldots,a$. Thus translates over
$\langle(a+b+1,0),(1,1)\rangle$ partition the grid.

For $L=\Pshape a200$, $a\ge2$, the following six tiles, repeated by
$(6j,0)$ for $j\in\Z$, tile the strip of height $a+3$. Each listed bar
and foot includes the junction, which is counted once.
\begin{center}\small
\begin{tabular}{@{}cll@{}}\toprule
Tile&Vertical bar&Horizontal foot\\\midrule
A&$x=0,\ 0\le y\le a$&$y=0,\ 0\le x\le2$\\
B&$x=1,\ 1\le y\le a+1$&$y=a+1,\ -1\le x\le1$\\
C&$x=2,\ 2\le y\le a+2$&$y=a+2,\ 0\le x\le2$\\
D&$x=3,\ 2\le y\le a+2$&$y=a+2,\ 3\le x\le5$\\
E&$x=4,\ 1\le y\le a+1$&$y=1,\ 2\le x\le4$\\
F&$x=5,\ 0\le y\le a$&$y=0,\ 3\le x\le5$\\\bottomrule
\end{tabular}
\end{center}
Rows $2,\ldots,a$ receive one cell from each vertical bar. Row zero
is covered by the feet of A and F. Row one receives A, B, the foot of E,
and F, covering residues $0,1,2,3,4,5$. Row $a+1$ receives the foot
of B at residues $5,0,1$ and C,D,E at $2,3,4$; row $a+2$ receives
the feet of C and D. This proves exact coverage in every row.

For the remaining case $\Pshape4300$, the following placements give a
strip of height eight and horizontal period eight. A placement is
written as its junction and its directed arms.
\begin{center}\small
\begin{tabular}{@{}cc@{}}\toprule
Junction&$(E,N,W,S)$\\\midrule
$(4,1)$&$(0,3,4,0)$\\
$(3,2)$&$(0,4,3,0)$\\
$(5,0)$&$(0,4,3,0)$\\
$(2,7)$&$(3,0,0,4)$\\
$(7,5)$&$(0,0,3,4)$\\
$(8,6)$&$(0,0,4,3)$\\
$(6,0)$&$(3,4,0,0)$\\
$(9,7)$&$(0,0,3,4)$\\\bottomrule
\end{tabular}
\end{center}
Reduction modulo eight in the horizontal coordinate gives each of the
64 strip cells exactly once. The motif is also checked in Lean.
The drawing in the original half-plane paper can be reconstructed as a
width-eight construction although its text calls the width six. This
dimensional correction leaves the existence classification unchanged;
it does not assert impossibility of every width-six strip.
\begin{figure}[htbp]\centering
\includegraphics[width=.92\linewidth]{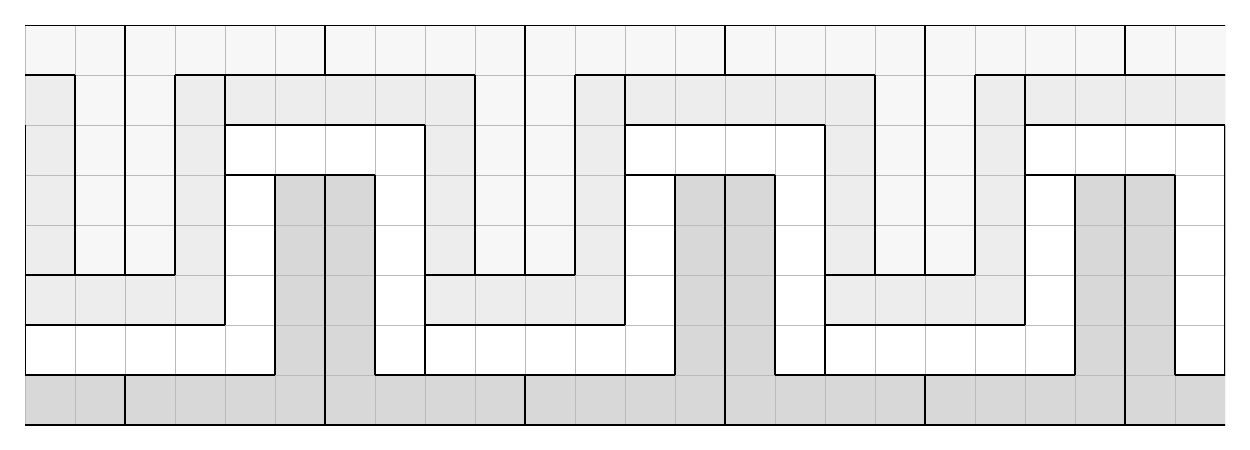}
\caption{Three periods of the width-eight strip for $\Pshape4300$,
whose bounding dimensions are five by four.}
\label{fig:lstrip}
\end{figure}

\section{The one-cell-stem family and Dahlke's corner argument}\label{sec:guns}
The positive bent-strip construction for every $\Pshape a110$ was
given in Proposition~\ref{prop:tstrip}. This section proves the half-strip
and rep-tile obstructions for every $a\ge4$.
The three basic corner configurations come directly from section 23 of
Dahlke's gun argument~\cite{dahlkeGun}. We reconstruct its local cases
as complete certificates and add explicit inequalities suitable for the
two infinite or scaled-region deductions below. The rectangle obstruction
is therefore credited to the earlier argument, including $a=4,5$.

\subsection{A local transition statement}
A \emph{supported corner} at $(X,Y)$ consists of a selected collection
of whole tiles and occupied boundary cells. Relative to $(X,Y)$ its wall
contains $(-1,0),\ldots,(-1,w-1)$ and its floor contains
$(0,-1),\ldots,(f-1,-1)$. In the northeast quadrant the selected
cells are exactly the decoration $E$ specified below. Cells of a
selected tile outside this quadrant remain part of that tile; they are
not cut away. Initially the wall and floor can be supplied by the
exterior of the target region. Interior successors use actual tile cells.
\begin{center}\small
\begin{tabular}{@{}ccclcc@{}}\toprule
State $s$&$w$&$f$&Decoration $E$&Rank $r(s)$&Potential $h(s)$\\\midrule
$\alpha$&4&4&$\varnothing$&0&0\\
$\beta$&5&6&$\{(0,0)\}$&1&2\\
$\gamma$&5&6&$\{(0,0),(1,0)\}$&2&2\\
$\beta^{\mathsf T}$&6&5&$\{(0,0)\}$&1&1\\
$\gamma^{\mathsf T}$&6&5&$\{(0,0),(0,1)\}$&0&0\\\bottomrule
\end{tabular}
\end{center}
The superscript denotes reflection in $x=y$.
\begin{figure}[htbp]\centering
\includegraphics[width=\linewidth]{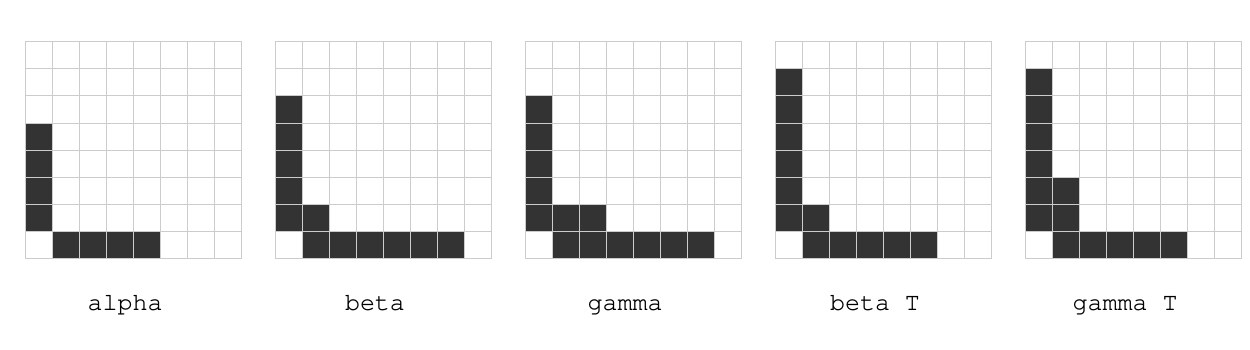}
\caption{The five supported-corner types. Dark cells are occupied.
The two reflected types make the orientation bookkeeping explicit.}
\label{fig:corners}
\end{figure}

\begin{proposition}[Certified corner transition]\label{prop:transition}
For $a\ge4$, a supported corner of type $s$ in an exact tiling by
$\Pshape a110$ has a supported successor of type $s'$ obtained by
adding finitely many whole tiles, at displacement $(\Delta x,\Delta y)$,
provided the supported queries lie in the target region and the covering
tiles remain below its specified ceiling. The selected and supporting
cells must lie below that ceiling; whenever a selected cell belongs to
a tile, that whole tile is selected. The certificates query only
relative columns $0,\ldots,7$ and prove
\begin{align}
&\Delta x,\Delta y\ge0,\qquad \Delta x+\Delta y>0,\label{eq:progress}\\
&s=\alpha\ \Longrightarrow\ \Delta x>0\text{ and }\Delta y>0,\nonumber\\
&\Delta y=0\ \Longrightarrow\ r(s')<r(s),\label{eq:zerorank}\\
&\Delta x-4\Delta y\le h(s)-h(s').\label{eq:potential}
\end{align}
The new selected cells meet the successor quadrant in exactly its listed
decoration. In particular, they supply the successor wall and floor.
\end{proposition}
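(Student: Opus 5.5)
We prove Proposition~\ref{prop:transition} separately for each of the five states $s\in\{\alpha,\beta,\gamma,\beta^{\mathsf T},\gamma^{\mathsf T}\}$. In each case we build a finite obstruction-style tree in the sense of Theorem~\ref{thm:certificate-soundness}. Its leaves are either contradictions or supported successor configurations, as permitted by the remark following that theorem.

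We work in coordinates relative to $(X,Y)$. The root configuration consists of the wall cells $(-1,0),\ldots,(-1,w-1)$, the floor cells $(0,-1),\ldots,(f-1,-1)$, and the decoration $E$. The first query is always the lowest, leftmost uncovered cell of the quadrant: $(0,0)$ for $\alpha$, $(1,0)$ for $\beta$, $(2,0)$ for $\gamma$, and so on. We enumerate all eight orientations of $\Pshape a110$ with symbolic junction through that cell. A candidate is discarded when it meets an occupied wall, floor, or decorated cell, or when it leaves the region. The length-$a$ arm with $a\ge4$ is what rules out most placements near the corner: a long horizontal arm cannot pass the wall, and a long vertical arm cannot pass the floor within the supported width $w,f\ge4$. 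After each placement we query the next lowest-leftmost uncovered cell, restricting queries to relative columns $0,\ldots,7$.

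A branch stops as soon as one of two things happens. Either a cell has no admissible cover, giving a contradiction, or the selected tiles exhibit a new corner $(X',Y')$ whose wall and floor have lengths at least those of some state $s'$ and whose decoration in the new quadrant is exactly $E(s')$. Whenever a successor is recognised we record $(\Delta x,\Delta y)=(X'-X,Y'-Y)$. We then check the four conditions directly. Progress \eqref{eq:progress} holds because the new corner sits on top of or to the right of tiles just placed. For $\alpha$ the first tile necessarily fills a full row segment and a column, giving strict progress in both coordinates. When $\Delta y=0$ we verify $r(s')<r(s)$; for example $\gamma\to\beta$ or $\beta\to\alpha$ along the floor. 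Finally we verify $\Delta x-4\Delta y\le h(s)-h(s')$.

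The weights $h$ and the factor $4$ are taken as given from the table. The check is therefore finite arithmetic on each leaf, linear in $a$ where tile extents depend on $a$. Some displacements may grow with $a$, for instance $\Delta y=a$ from a vertical arm. These only help \eqref{eq:potential}, since $\Delta x$ stays bounded by the column window of width eight.

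The proviso about the ceiling is handled by treating a candidate tile crossing the ceiling as excluded, exactly like one leaving the region. In the statement the conclusion is asserted only when the relevant queries lie below the ceiling. Whole-tile selection is automatic, since each child adds a complete copy.

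The main obstacle is the genuine branching in the symbolic tree. The parameter $a$ ranges over all $a\ge4$, and some branches, such as a long arm running up the wall column, produce configurations whose successor type depends on comparisons like $a=4$ versus $a\ge5$. I would first split the domain into $a=4$, $a=5$, and $a\ge6$, and run the enumeration with the junction and arm endpoints as linear expressions in $a$. I would then check that the successor state and the inequality \eqref{eq:potential} are uniform in each piece. The reflected states $\beta^{\mathsf T},\gamma^{\mathsf T}$ are not symmetric to $\beta,\gamma$ with respect to the potential, because the bound $\Delta x-4\Delta y$ is not symmetric. They must therefore be certified independently rather than by reflecting the $\beta,\gamma$ trees.
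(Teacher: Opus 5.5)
Your architecture matches the paper's: exhaustive trees through queried cells, with all eight orientations and symbolic junctions, leaves that are contradictions or named successors, soundness by the induction of Theorem~\ref{thm:certificate-soundness}, and the split $a=4$, $a=5$, $a\ge6$. The gap is in how you handle the ceiling proviso.

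\textbf{The ceiling and the missing query invariant.} A certificate is written relative to the corner, and the relative height of the ceiling is not fixed. In Theorem~\ref{thm:gunhs} the corner can sit at any height with $Y+w\le K$; in Theorem~\ref{thm:gunrep} the region is not a strip at all. So inside the tree you cannot discard candidates that ``cross the ceiling'' or ``leave the region'' unless you carry that height as an extra symbolic parameter. The only pruning available uniformly is against selected whole tiles and the listed support cells. Conversely, if you read ``provided the supported queries lie in the target region'' as a bare assumption, the proposition becomes unusable. Nothing would then guarantee that assumption at an arbitrary interior corner. The missing ingredient, which the paper builds into its replay, is a query-height invariant. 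Every query must lie no higher than an already occupied support cell or the top of an already selected whole tile. Then supports below the ceiling place each query in the region before its covering tile is requested, and containment of that tile propagates the invariant. This is exactly what both applications invoke. Your lowest-leftmost rule does not provide it. For instance, a query in column $0$ directly above a completely covered row need not be at or below any known occupied cell. The bound therefore has to be established and checked at every node.

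\textbf{The stopping rule.} You stop a branch at the first recognisable corner and only afterwards check the four conditions, but such a corner need not satisfy them. Your own example fails. For a zero-height transition $\gamma\to\beta$ one has $h(\gamma)=h(\beta)=2$, so \eqref{eq:potential} forces $\Delta x\le0$, which contradicts \eqref{eq:progress}. The admissible zero-height transitions are only $\gamma\to\beta^{\mathsf T}\to\alpha$ and $\beta\to\alpha$, with $\Delta x\le h(s)-h(s')\le2$. Similarly, a column window of width eight does not make \eqref{eq:potential} automatic when $\Delta y\le2$, since $4\Delta y+h(s)-h(s')$ can then be as small as $6$. The trees must be searched so that every leaf already satisfies all four conditions, with branches continued past inadmissible corners. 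Your plan needs such a rule; a post hoc check may simply fail.
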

\begin{proof}[Computer-assisted proof]
The local statement is supplied separately by finite certificates for
$a=4$ and $a=5$ and by one symbolic family for all $a\ge6$.
At each node all whole copies through the queried cell are exhausted;
overlaps are rejected and all surviving terminal nodes give a specified
successor. Finite induction as in Theorem~\ref{thm:certificate-soundness}
proves the transition. The certificates for four and five contain 1750
and 872 nodes respectively; the unbounded family is proved using 8309
arithmetic lemmas and five geometric transition replays.
The zero-height transitions can only follow
$\gamma\to\beta^{\mathsf T}\to\alpha$ or
$\beta\to\alpha$; some are absent for $a=4$.
All four inequalities and the validity of each supported query are
checked in the formal replay. The formal inputs and exact whole-tiling
interpretation are given in Section~\ref{sec:formalization}.
More precisely, a query's height is bounded by that of an already occupied
support cell, or by the top of an already selected whole tile. These
bounds are part of the replay. If the known supports lie below a ceiling,
the query therefore lies below it \emph{before} its covering tile is
requested. Containment of that newly selected tile preserves the same
invariant for subsequent queries. This is the supported-query condition
used in both applications below.
\end{proof}

These conditions state more than the existence of an endless sequence
of corners. For a rectangle, unbounded $X+Y$ would suffice for a
contradiction. In a half-strip, horizontal escape must also be excluded;
\eqref{eq:zerorank} supplies that additional information. For an enlarged
T, \eqref{eq:potential} controls the distance from the stem.

\begin{theorem}\label{thm:gunhs}
For every $a\ge4$, $\Pshape a110$ tiles no half-strip and no rectangle.
\end{theorem}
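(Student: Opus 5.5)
Since $\R\Rightarrow\HS$ by Proposition~\ref{prop:hierarchy}, it suffices to rule out half-strips. Fix $a\ge4$ and suppose, for contradiction, that $\N\times\{0,\ldots,w-1\}$ has an exact tiling by $\Pshape a110$. Here $w$ is arbitrary; the tile has width and height at least $a+1\ge5$ in some orientation, so in particular $w$ must be large enough for the queries. The plan is to start a supported corner at the left end and iterate Proposition~\ref{prop:transition}, producing an infinite sequence of corners. We then show that this sequence must stay in a bounded part of the half-strip, which is impossible because each step makes progress.

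\textbf{Initial corner.} Place a corner of type $\alpha$ at $(X,Y)=(0,0)$. Its wall $(-1,0),\ldots,(-1,3)$ and floor $(0,-1),\ldots,(3,-1)$ lie outside the region, so the exterior supplies them, and its decoration is empty. If $w$ is below the heights needed for the supported queries, a direct check (or the same certificates) should already give a contradiction. I would verify this small-width case separately: the tile has a length-$(a+1)\ge5$ bar, so very narrow strips force horizontal bars, and the stem then cannot fit.

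\textbf{Iteration.} At each stage, apply Proposition~\ref{prop:transition} with the ceiling equal to the top row $w-1$ of the half-strip. The supported-query condition then holds automatically. Every query's height is bounded by an occupied support cell or by the top of a selected whole tile, and both of these lie in the region. Every covering tile lies in the actual tiling, hence below the ceiling. Only the left boundary and the bottom boundary are ever used as exterior support, and only at the start, so all queried cells lie in the region. Each step gives $\Delta x,\Delta y\ge0$ with $\Delta x+\Delta y>0$. The new corner again has whole selected tiles supplying its wall and floor, so the process continues forever.

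\textbf{Contradiction.} Sum over steps. Because $Y$ is nondecreasing and every relevant cell is in rows $0,\ldots,w-1$, the total vertical progress $\sum\Delta y$ is bounded by $w$. Hence only finitely many steps have $\Delta y>0$. After the last such step, every step has $\Delta y=0$, so by \eqref{eq:zerorank} the rank $r$ strictly decreases at each of them. Since $r\in\{0,1,2\}$, there are at most two further steps. This contradicts the infinitude of the sequence. Horizontal escape to the right is exactly what \eqref{eq:zerorank} excludes, which is why \eqref{eq:potential} is not needed here.

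\textbf{Main obstacle.} The delicate point is the bookkeeping near the top of the strip. The corner at height $Y$ queries cells in rows up to about $Y+6$, so I must confirm that the supported-query hypothesis of Proposition~\ref{prop:transition} really follows from actual tile containment. Otherwise the induction would break at a query above the top row. I would handle this through the invariant stated at the end of that proof: queries are bounded by existing supports, and those lie in the region. Alternatively, $Y$ strictly below $w$ already suffices, because progress $\Delta y>0$ cannot occur more than $w$ times. The rectangle statement then follows either from $\R\Rightarrow\HS$ or directly, since a rectangle is a half-strip truncation whose right wall only adds exterior support.
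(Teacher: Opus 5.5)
Your main argument matches the paper's. You start an $\alpha$ corner at the origin with its wall and floor supplied by the exterior, iterate Proposition~\ref{prop:transition} with the top row as ceiling, and use \eqref{eq:zerorank} to block horizontal escape. Your two-phase count (finitely many rising steps, then at most two level steps) is equivalent to the paper's single nonnegative integer $3(K-Y)+r(s)$. That integer drops on every transition, because a rise lowers $3(K-Y)$ by at least three while $r$ grows by at most two.

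The one real hole is the narrow case. The $\alpha$ wall $(-1,0),\ldots,(-1,3)$ consists of supporting cells, which must lie below the ceiling. So Proposition~\ref{prop:transition} is simply unavailable when $w\le3$, and "the same certificates" cannot be invoked there. The justification you sketch for this case is also wrong. $\Pshape a110$ has an $(a+2)\times2$ bounding box, so no orientation is at least $a+1$ in both directions. Horizontal copies fit comfortably in height two; Proposition~\ref{prop:tstrip} even tiles the full strip of width two. So "the stem then cannot fit" is not the reason.

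The paper avoids the issue by stacking four copies of a height-$H$ half-strip into one of height $K=4H\ge4$ before starting the corner argument. A direct check also works. For $w\le3$ every copy is horizontal, since a vertical crossbar needs $a+2\ge6$ rows. The cell $(0,0)$ can then only be covered by a row-zero crossbar beginning at $x=0$, with its stem at $(1,1)$ or $(a,1)$. Now $(0,1)$ has no cover:
\begin{itemize}
\item a row-one crossbar through it must begin at $x=0$, so it hits that stem;
\item a stem cell there needs a junction either at the occupied $(0,0)$ or at $(0,2)$, and the latter crossbar reaches $x=-1$.
\end{itemize}
With either fix, the rest of your proof goes through as written.
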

\begin{proof}
If a half-strip of positive height $H$ were tileable, stack four copies
to obtain one of height $K=4H\ge4$. At its initial corner the exterior
boundary supplies state $\alpha$. At any subsequent corner the queried
cells are in the half-strip and whole tiles stay below its ceiling;
Proposition~\ref{prop:transition} therefore applies.
For a corner of type $s$ at height $Y$ assign the integer
\[
                         3(K-Y)+r(s).
\]
It is nonnegative because the supported corner lies below the ceiling.
If a transition has $\Delta y=0$, \eqref{eq:zerorank} decreases the
rank. If $\Delta y\ge1$, the first term decreases by at least three
while $r$ can increase by at most two. Each transition therefore strictly
decreases a nonnegative integer, a contradiction. A rectangle would give
a half-strip by repetition.
\end{proof}

\begin{theorem}\label{thm:gunrep}
For every $a\ge4$, $\Pshape a110$ is not a lattice rep-tile.
\end{theorem}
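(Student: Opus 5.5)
Suppose, for contradiction, that copies of $T=\Pshape a110$ tile the enlargement $kT$ for some integer $k\ge2$. The plan is to run the corner transitions of Proposition~\ref{prop:transition} inside the long horizontal bar of $kT$. The potential inequality~\eqref{eq:potential} will prevent the resulting corner sequence from drifting toward the stem, and the progress inequality~\eqref{eq:progress} will prevent it from running forever. The strategy is the one used for Theorem~\ref{thm:gunhs}. The difference is that the ceiling is not uniform: $kT$ consists of a $k\times k(a+2)$ bar, from $x=-k$ to $x=k(a+1)-1$ and $y=0,\dots,k-1$, together with a $k\times k$ stem block above the junction block. So the argument must show that the corners stay under the part of the bar that lies away from the stem.

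\textbf{Step 1: the initial corner.} Reflect $kT$ so that the long arm, of length $ka$, runs east. Start at the southwest corner of the whole shape, the cell $(-k,0)$. There the exterior supplies the wall and the floor, so the configuration is of type $\alpha$ with $h=0$. Its ceiling is the top of the bar, $y=k$, provided the corner is far from the stem block columns $0,\dots,k-1$. If the western end turns out to be unsuitable, I would instead start at the eastern tip, using the reflected orientation. That corner is a distance of roughly $ka$ from the stem.

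\textbf{Step 2: the invariant along the corner sequence.} Each transition changes $Y$ by $\Delta y\ge0$. Summing~\eqref{eq:potential} along the sequence gives
\[
X_t-X_0\le 4(Y_t-Y_0)+h(s_0)-h(s_t)\le 4Y_t+2 .
\]
Every queried or selected cell lies in relative columns $0,\dots,7$ and below heights already bounded by existing supports, by the supported-query discussion. Hence, while $Y_t<k$, all queries stay in columns $x\le -k+4k+O(1)$.

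Here is where $a\ge4$ enters. The stem block begins at column $x=0$, so the interval $-k\le x<0$ is pure bar, of width $k$. This is too short for a direct comparison with the bound $4k$. I therefore expect to start from the eastern tip instead. Measured from the tip, the pure-bar region under ceiling $k$ extends westward for $ka-k\ge 3k$ columns when the tip is reflected to be the left end. The corners then stay in the region where the ceiling is exactly $k$ if $4Y+O(1)<k(a-1)$, which holds when $Y<k$ and $a\ge 5$. The borderline case $a=4$ needs the constants checked more carefully; possibly it needs the sharper $h$ values, or a separate use of the fact that rectangle nontileability is already known. This is the main obstacle: matching the slope $4$ in~\eqref{eq:potential} against the arm length $a$ in the scaled picture.

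\textbf{Step 3: termination.} Once every corner is confined to the rectangular part of the bar, of height $k$, the supported-query hypotheses of Proposition~\ref{prop:transition} hold at every step, so the transitions can be iterated indefinitely. Assign each corner the integer $3(k-Y)+r(s)$, exactly as in Theorem~\ref{thm:gunhs}. By~\eqref{eq:zerorank} and~\eqref{eq:progress}, each transition strictly decreases this nonnegative integer, which is impossible. I would also verify that corners with $Y\ge k$ cannot arise while the support invariant holds, since the supports themselves lie below the ceiling; this closes the argument.
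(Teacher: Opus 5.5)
There is a genuine gap. Your overall route matches the paper's. You start the $\alpha$ certificate at the tip of the long arm, with the tip reflected so corners advance toward the stem. You sum \eqref{eq:potential} to get $X-4Y\le -h(s)$, keep every tile through a query strictly before the stem so that containment forces it under height $k$, and then use the rank $3(k-Y)+r(s)$.

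\textbf{The clearance step fails at $a=4$.} You leave this case open yourself, so the proposal does not prove the theorem at its smallest parameter. Your count of the pure-bar region is also off: measured from the tip it has $ka$ columns, not $ka-k$. With your count, $a=5$ fails too.

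The remedies you suggest do not work. A tile through a query can reach $a+8$ columns beyond the corner, since the query is at most $7$ columns to the right and a tile spans at most $a+1$ columns. For $a=4$ you therefore need $4k-X>12$. The bound $Y\le k-1$ only guarantees $4k-X\ge 4+h(s)$, which is at most $6$. Sharper values of $h\le 2$ cannot close this. Rectangle non-tileability does not exclude a rep-tiling either, since the hierarchy only gives $\R\Rightarrow\Rep$.

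\textbf{The missing idea is to use the wall height.} At an interior corner before the stem, the wall consists of actual tile cells in column $X-1$, rows $Y,\dots,Y+w-1$, with $w\ge4$. These cells lie in the height-$k$ bar, so $Y+w\le k$. Hence $X\le 4(k-w)-h(s)$, and
\[
ak-X-(a+8)\ \ge\ (a-4)(k-1)+4(w-4)+h(s)+4\ \ge\ 4 .
\]
This handles every $a\ge4$ uniformly.

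\textbf{You must also enlarge the scale first.} Iterate the rep-tiling to obtain a large $k$; the paper takes $k\ge a+2$. This ensures the initial $\alpha$ queries lie below height $k$ and their tiles satisfy $X\le a+8<ak$. With an arbitrary $k\ge2$, the supported-query hypotheses of Proposition~\ref{prop:transition} can already fail at the first step.

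With these two additions, your Step~3 goes through as an induction in which the clearance is established before each query, so each new covering tile stays in the rectangular arm.
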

\begin{proof}
Iterating a hypothetical rep-tiling provides integer scales as large as
needed; choose $k\ge a+2$. Reflect the enlarged T and measure coordinates
leftward and upward from the right-hand end of its long arm. Its cell set
becomes
\[
\{0\le X<(a+2)k,\ 0\le Y<k\}\ \cup\
\{ak\le X<(a+1)k,\ k\le Y<2k\}.
\]
Before $X=ak$ the region has height $k$. Start the $\alpha$ certificate
at $(0,0)$. A tile through a query extends at most $a+1$ cells horizontally
from that query, since the largest difference of two horizontal cell
coordinates in any copy is $a+1$. Thus the initial queried tiles have
$X\le a+8<ak$ and remain in the rectangular arm. They consequently
stay below height $k$, so the local transition hypotheses hold. Its first
successor has both coordinates positive.

Summing \eqref{eq:potential} gives the invariant
$X-4Y\le-h(s)$. For an interior corner still before the stem, its
actual vertical wall of height $w\ge4$ lies in that arm, so $Y+w\le k$.
Consequently
\begin{equation}\label{eq:stemclearance}
ak-X\ge(a-4)k+4w+h(s)>a+8.
\end{equation}
For the strict inequality, subtract $a+8$ from the middle expression:
\[
(a-4)(k-1)+4(w-4)+h(s)+4\ge4.
\]
Every whole tile through the next query therefore remains before the
stem, since the query is at most seven columns to the right of the
corner. Coverage and containment validate all supported queries below
the height-$k$ ceiling. The successor floor is supplied by these selected
cells, so its origin also remains before the stem. This proves the
invariant and clearance inductively, without extending the rectangular
arm across the concave boundary. The same rank
$3(k-Y)+r(s)$ as in Theorem~\ref{thm:gunhs} now strictly decreases
indefinitely, a contradiction.
\end{proof}

Together with the rectangle witnesses for $a\le3$, the bent-strip
construction, and Proposition~\ref{prop:hierarchy}, these two theorems
prove Corollary~\ref{cor:gun}, including the rep-tile entry. The case
$n=0$ in that corollary is an L triomino and has a two-copy rectangle
construction from Section~\ref{sec:lpositive}.

\section{Boundary obstructions for T-polyominoes}
\label{obs:boundary}

Throughout this section let $T=\Pshape{a}{b}{c}{0}$, where
$a\ge c\ge1$ and $b\ge1$, and put $A=a+c+1$.  The \emph{crossbar}
has $A$ cells; the perpendicular \emph{stem} has $b$ cells beyond its
junction.  All coordinates below denote cells, rather than vertices.

\begin{lemma}[The quadrant corner]\label{obs:quadrant}
If $c\ge2$, then $T$ does not tile a quadrant.
\end{lemma}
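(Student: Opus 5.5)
The plan is a short forced-move analysis at the corner of $\N^2$, in the style of Theorem~\ref{thm:certificate-soundness} but small enough to do by hand. Suppose a tiling of the quadrant exists.

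\emph{Covering the corner.} First I determine which copy covers $(0,0)$. That copy lies in $\N^2$ and contains $(0,0)$, so $(0,0)$ must be the lower-left corner of the copy's bounding box, and that corner must be occupied. For $T$ the bounding box is $A\times(b+1)$ in some orientation. Its only occupied corners are the two ends of the crossbar, because the stem leaves the crossbar at interior position $c$ or $a$, both at least $1$. So $(0,0)$ is an end of a crossbar lying along one boundary ray, with the stem pointing into the quadrant. Using the reflection $\sigma$ of the quadrant, I may assume the crossbar is $\{(x,0):0\le x<A\}$ and the stem is the column $x=p$, rows $1,\ldots,b$, with $p\in\{c,a\}$. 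Since $c\ge2$ we get $p\ge2$; this is the first place the hypothesis enters.

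\emph{The next cells.} Next I examine the cells $(0,1)$ and $(1,1)$. Both are uncovered, because $p\ge2$. The guiding observation is that any copy through such a cell cannot use row $0$ for $x<A$ or any column $x<0$. Every arm of $T$ other than the stem has length at least $c\ge2$. So a copy in which the queried cell is the junction, or an interior crossbar cell, must send an arm of length at least $2$ westward or southward. Such an arm leaves the region or hits the first tile. I will enumerate the eight orientations for the copy covering $(0,1)$. The surviving options should be only a vertical crossbar along $x=0$ starting at $(0,1)$, and possibly a stem tip or a horizontal crossbar end at $(0,1)$. Each of these is then tested against the bottom tile.

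\emph{Propagation and contradiction.} I then do the same for $(1,1)$, which becomes a new concave corner once its west and south neighbours are occupied. I intend to show that $(1,1)$ admits no copy. A downward stem tip at $(1,1)$ has its crossbar reaching at least two columns west, which is impossible. A horizontal crossbar starting at $(1,1)$ collides with the stem at column $p\le a<A$. A vertical crossbar from $(1,1)$ has its stem either pointing west into the occupied column $x=0$, or pointing east. The eastward case must be checked against the first stem and the tile on the wall.

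\emph{Main obstacle.} I expect the hardest part to be the eastward-stem case at $(1,1)$, together with the case where the wall tile at $(0,1)$ is not a vertical crossbar. There the contradiction depends on comparing $b$ with $c$ or $a$, and it may be necessary to query one or two further cells, such as $(1,2)$ or $(2,1)$ when $p>2$. I would first try to close this case by noting that the cell strictly between column $0$ and column $p$ at the relevant height is enclosed on three sides and admits no copy, because every arm of $T$ through it would have length at least $2$. If that fails, I would fall back on a finite symbolic certificate in the sense of Theorem~\ref{thm:certificate-soundness}, with parameters $a\ge c\ge2$ and $b\ge1$.
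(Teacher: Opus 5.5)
Your strategy is the paper's: force the corner tile, then the wall tile at $(0,1)$, then show $(1,1)$ has no cover. As written, however, the proposal never reaches a contradiction. You leave the wall tile undetermined and call the vertical crossbar through $(1,1)$ with eastward stem the unresolved ``main obstacle.'' You then fall back on further queries or a certificate. The missing idea is to pin down the wall tile and use its stem. No comparison of $b$ with $a$ or $c$ is needed, and no further cells are needed.

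\emph{The wall tile.} At $(0,1)$ the cases close as follows.
\begin{itemize}
\item Any contact in which the tile continues west or south along a bar leaves the quadrant or meets $(0,0)$. This covers the junction, an interior bar cell, and the east or north end of a crossbar.
\item A horizontal crossbar starting at $(0,1)$ ends at $(A-1,1)$. It meets the first stem at $(p,1)$, because $p\le a<A$.
\item Either stem tip at $(0,1)$ puts the perpendicular crossbar, both of whose arms have length at least $c\ge2$, across $x=-1$ or $y=-1$.
\end{itemize}
Hence the wall tile is the vertical crossbar $\{0\}\times\{1,\ldots,A\}$. Its stem must point east, at height $1+r'$ with $r'\in\{a,c\}$. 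Since $b\ge1$, it occupies $(1,1+r')$, and $1+r'\le a+1<A$.

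\emph{The cell $(1,1)$.} A vertical crossbar through $(1,1)$ must start there, because $(1,0)$ is occupied. It therefore contains $(1,1+r')$, whichever way its own stem points. This is the case you could not close. Add your horizontal-crossbar collision at $(p,1)$ and the two stem-tip contacts at $(1,1)$. You omitted the horizontal stem tip: its junction is $(b+1,1)$, and its vertical crossbar reaches below row $0$. With these, $(1,1)$ is uncovered.

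Your proposed fallback is also unsound as stated. The stem may have length $b=1$, so it is not true that every arm through an enclosed cell has length at least two.
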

\begin{proof}
A tile covering $(0,0)$ in $\N^2$ has a crossbar endpoint there:
a stem contact would put one crossbar arm outside the quadrant.
After diagonal reflection, its crossbar is $[0,A-1]\times\{0\}$ and
its upward stem has abscissa $r\in\{a,c\}$.

The tile covering $(0,1)$ cannot have a horizontal crossbar, since it
would meet that stem.  A horizontal stem contact would put a vertical
crossbar arm below row zero, since both such arms have length at least
two.  Thus its vertical crossbar must start at $(0,1)$ and its stem
point right.  Now $(1,1)$ has no cover.  A horizontal crossbar through
it must begin at $x=1$ and intersects the first stem; a vertical
crossbar must begin at $y=1$ and intersects the second stem.  Either
perpendicular-stem contact puts a crossbar arm beyond the quadrant.
\end{proof}

We use the following small-gap argument repeatedly.  The occupied
supports in its statement are unions of \emph{whole tiles} selected
from a hypothetical tiling.  A tile meeting an unoccupied cell is
therefore disjoint from all these supports.

\begin{lemma}[Blocked short run]\label{obs:short-run}
Suppose $b\le A-1$.  A run of $g$ consecutive cells in one row,
where $3\le g<A$, cannot be filled if its immediately lower neighbors
and its two immediate horizontal neighbors are occupied, while the
run is disjoint from the supporting tiles.
\end{lemma}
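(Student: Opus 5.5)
The plan is to classify how a single tile can meet the run, and then show that neighbouring run cells force incompatible tile types. Place the run at cells $(x_0,y),\ldots,(x_0+g-1,y)$. By hypothesis, every cell $(x,y-1)$ below it and the cells $(x_0-1,y)$ and $(x_0+g,y)$ are occupied by whole supporting tiles. Any tile $P$ of the tiling meeting the run is therefore disjoint from the supports. It meets row $y$ only inside the run, and it contains no cell below the run.

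\emph{Step 1: each tile meets the run in exactly one cell, of one of two kinds.} Intersect a copy of $T$ with row $y$.
\begin{itemize}
\item A horizontal crossbar in row $y$ contributes $A>g$ consecutive cells. These would have to fit between two occupied neighbours, which is impossible.
\item A horizontal crossbar in another row contributes at most one stem cell. Since $P$ has no cell below the run, that cell must be the lower tip of a downward stem, with the crossbar in row $y+b$.
\item A vertical crossbar contributes one crossbar cell, together with the horizontal stem if row $y$ is the junction row. Both crossbar arms have length $\ge1$, so the junction row has crossbar cells below it, which is excluded. Hence the crossbar's lowest cell lies in the run and the stem lies strictly higher; the tile covers the column cells $(x,y),\ldots,(x,y+A-1)$.
\end{itemize}
So every run cell is either a \emph{tip}, covered by a downward stem whose crossbar occupies row $y+b$, or a \emph{foot}, the bottom end of a vertical crossbar of height $A$.

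\emph{Step 2: there are no tips.} The key observation is that the stem attaches at offset $c$ or $a$ from a crossbar end, and both are $\ge1$. Thus a tip at column $x$ has its crossbar covering columns $x-1,x,x+1$ in row $y+b$.
\begin{itemize}
\item Since $g\ge2$, a tip has a run neighbour at $x\pm1$.
\item That neighbour cannot be a foot. Its column occupies rows $y,\ldots,y+A-1$, which include row $y+b$ because $b\le A-1$, so it meets the crossbar.
\item That neighbour cannot be a tip either. Its crossbar would also lie in row $y+b$ and would have to be disjoint from the first, so one of the two crossbars would end at its own stem column, contradicting the interior attachment of the stem.
\end{itemize}

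\emph{Step 3: three consecutive feet are impossible.} All run cells are now feet, and $g\ge3$ provides three adjacent feet at columns $x-1,x,x+1$. Each occupies its column on rows $y,\ldots,y+A-1$. The middle tile's stem leaves its column horizontally at height $y+c$ or $y+a$, both in $[y+1,y+A-1]$. Whichever side it points to, its first cell lies in a neighbouring column inside that neighbour's occupied range, giving an overlap.

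The main point of care is Step 1, namely that the exhaustive list of intersections is correct. In particular, a junction cannot lie in the run because all crossbar arms are positive. Steps 2 and 3 are short once the interior-stem observation is made.
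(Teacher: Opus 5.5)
Your proof is correct and follows essentially the same route as the paper. You exclude horizontal crossbars and horizontal-stem contacts so that every run cell is a foot ($V$) or a stem tip ($S$); you then rule out tips because a tip's crossbar at height $b\le A-1$ meets any neighbouring starter, and you show that three consecutive feet force the middle stem into a neighbour. Your Step 1 spells out in more detail the case analysis that the paper compresses into two sentences.
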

\begin{proof}
A horizontal crossbar cannot fit.  A horizontal stem contact either
crosses an occupied endpoint or puts its junction above the occupied
floor, where a downward crossbar arm collides with the floor.
Each cell consequently starts either a vertical crossbar, called $V$,
or an upward vertical stem whose junction is above it, called $S$.
Their vertical segments have respectively $A$ and $b+1$ cells.
An $S$ crossbar intersects its neighboring starter at height $b$
above the run.  Thus all starters must be $V$.  Two adjacent $V$
tiles can be disjoint only when their stems point away from one
another; three consecutive $V$ tiles are impossible.
\end{proof}

The same last argument shows that a consecutive block of $V/S$
starters has length at most two when $b\le A-1$, even if horizontal
crossbars occupy other parts of the row.

\begin{theorem}\label{obs:T-halfplane}
If $b\ge2$, then $T$ does not tile a half-plane.
\end{theorem}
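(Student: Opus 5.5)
The plan is to analyse how the boundary row $y=0$ of $\Z\times\N$ can be covered, and then show that the second row can never be completed. No extremal or leftmost tile is available in a half-plane, so every contradiction has to be local. Each contradiction will be a short case tree in the sense of Theorem~\ref{thm:certificate-soundness}, queried at a cell of row one and parametrised symbolically in $a,b,c$.

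\emph{Step 1: the row-zero types.} First I classify the copies meeting row zero. A horizontal stem cannot lie in row zero: its junction would carry a vertical crossbar with both arms of length $\ge c\ge1$, and one arm would go below the region. Hence every row-zero cell is covered in one of three ways:
\begin{itemize}
\item by a horizontal crossbar $H$, whose stem must point up, occupying column $x_0+c$ or $x_0+a$ in rows $1,\dots,b$, where $x_0$ is the left end of the crossbar;
\item by the lower end of a vertical crossbar, called $V$ as in Lemma~\ref{obs:short-run};
\item by the tip of a downward stem, called $S$, whose crossbar is horizontal at height $b$.
\end{itemize}
Since $A\ge3$ and $b\ge2$, every $V$ or $S$ column is occupied in rows $0,1,2$ at least. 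So in row one the occupied cells are exactly the $V/S$ columns and the stems of the $H$ tiles. The stem of an $H$ tile is strictly interior to its crossbar because $c\ge1$.

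\emph{Step 2: reduce to free runs in row one.} The cells of row one not yet occupied form runs whose floors lie entirely on $H$ crossbars, and each run is bordered by stems or $V/S$ columns. I split into cases on the length $g$ of such a run.
\begin{itemize}
\item If $3\le g<A$ and $b\le A-1$, Lemma~\ref{obs:short-run} applies directly.
\item If $g\in\{1,2\}$, every cell must start a $V$ or an upward $S$. I then check that the neighbouring stem of height $b\ge2$ clashes with the crossbar at height $b$ of an $S$, or with the stem of a $V$, as in the last sentence of the proof of Lemma~\ref{obs:short-run}. This forces a specific pattern, and propagating that pattern one column at a time should reach an uncovered cell.
\item If $g\ge A$, the run must straddle two adjacent $H$ crossbars with no $V/S$ between them. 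Then a horizontal crossbar may lie in row one, with its stem pointing up or down. A downward stem is impossible because the floor is occupied. I would then iterate: the new crossbar in row one recreates the same situation one row higher, with the old stems as walls.
\end{itemize}

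\emph{Step 3: rule out an infinite upward regress.} To prevent the iteration in Step~2 from climbing forever, I track a stem of height $b$ rising from row zero. Each new horizontal layer has width $A$, but it must avoid that stem, so it fits only if the horizontal gap between consecutive stems is at least $A$. Comparing gaps should therefore force one of the run cases $g<A$ within at most $b$ rows, and that case is already excluded.

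\emph{Main obstacle.} I expect the hardest part to be the regime $b\ge A$, where Lemma~\ref{obs:short-run} is unavailable, together with the long-run configurations. There I would first try a direct symbolic certificate. It would query the cell of row one immediately beside an $H$ stem, on its shorter side of length $c$, and enumerate all copies through it. A short horizontal gap of length $c<A$ bounded below by a crossbar and laterally by a tall stem seems rigid enough that every branch closes within a few queries. If it does not close, I would fall back on an area or parity count of row one between consecutive stems.
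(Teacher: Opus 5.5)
Your outline has a genuine gap. It never determines which tiles can meet row zero, and without that nothing guarantees that row one contains a run you can actually refute.

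The paper's proof rests on exactly this step.
\begin{itemize}
\item An $S$ contact is impossible for every $b\ge2$. Put its tip at $(0,0)$ with its length-$a$ arm to the right. Then $(1,0)$ is forced to begin an $H$ with junction at $x=a+1$, and the two tiles enclose $\{1,\dots,a\}\times\{1,\dots,b-1\}$, which no tile fits.
\item A $V$ contact is impossible when $b>a$, by a neighbour check at $(1,0)$. For $c\ge2$ the remaining range is closed because the cell $(1,1)$ beside it has no cover.
\end{itemize}
Once only $H$ tiles meet the boundary, consecutive stems leave a row-one gap $g_k=A-1+r_{k+1}-r_k$ with $r_k\in\{a,c\}$. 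The offsets cannot strictly increase at two consecutive steps, so some gap satisfies $2c\le g_k\le A-1$.

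This selection replaces your long-run case and the upward regress of Step~3. As written, Step~3 is not an argument: nothing shows the iteration terminates, and above row $b$ the stems you use as walls end. The regime $b\ge A$ likewise needs no certificate or parity count. In such a short gap no horizontal crossbar fits. A $V$ stem hits its neighbour, because every relevant vertical segment reaches its junction height, which is at most $a+1\le b$. Adjacent $S$ starters overlap.

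Your Step~1 claim that row one is occupied only by $V/S$ columns and $H$ stems is also false for $c=1$. A $V$ whose junction has height $1$ lays its stem along row one, and this is precisely the hard regime.
\begin{itemize}
\item For $c=1$ and $2\le b\le a$, boundary $V$ tiles survive one-step neighbour checks. The paper derives a forced pair $V_0,H_0$ and excludes it through blocked runs in rows one to three. That excludes every boundary $V$ only once $a\ge7$.
\item Even with only $H$ contacts, the offsets can alternate between $1$ and $a$, giving gaps of lengths $2$ and $2a$. A gap of length $2$ can be filled locally by two outward $V$ starters with high junctions: their stems, at height $a+1>b$, clear the neighbouring $H$ stems. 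So your step ``$g\in\{1,2\}$, propagate to an uncovered cell'' has no reason to close.
\item The paper instead counts starters in the length-$2a$ gap to force $a\le6$. It then settles the fifteen cases $2\le b\le a\le6$ by computer certificates with 693 to 1832 nodes on a three-row patch of width $3(a+2)$.
\end{itemize}
Your outline anticipates none of this case structure, so it is a plan rather than a proof.
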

\begin{proof}
Work in $y\ge0$.  A boundary tile has one of three forms: $H$, with
its horizontal crossbar on row zero and stem up; $V$, with its
vertical crossbar starting on row zero; or $S$, with the tip of its
vertical stem on row zero and horizontal crossbar at height $b$.

First, if $b>a$, a rightward boundary $V$ at $x=0$, with junction
height $r\in\{a,c\}$, is impossible.  At $(1,0)$ a $V$ or $S$
intersects its stem at $(1,r)$.  An $H$ must begin at $x=1$ and
has stem at $1+s$, $s\in\{a,c\}$; this intersects the old stem
because $1+s\le a+1\le b$ and $r<b$.  Reflect for a leftward $V$.

Next exclude $S$ for every $b\ge2$.  Put its tip at $(0,0)$ and
its length-$a$ crossbar arm to the right.  Its neighbor $(1,0)$
cannot be $S$.  It cannot be $V$: for $b\le A-1$ there is an
intersection at $(1,b)$, and otherwise the preceding exclusion
applies.  Hence this neighbor begins an $H$.  Avoiding the old
crossbar forces its junction to $x=a+1$.  The two tiles enclose
the nonempty rectangle
\[
       \{1,\ldots,a\}\times\{1,\ldots,b-1\}.
\]
Any tile meeting its interior must lie wholly inside, by connectivity
and the occupied boundary.  A horizontal crossbar is too wide;
a vertical crossbar would require both $b+1\le a$ and $A\le b-1$,
which are incompatible.

Assume first $c\ge2$.  For $b\ge A-1$, $V$ has already been
excluded.  Otherwise put a proposed rightward $V$ at $x=0$ and
its junction at height $r\in\{a,c\}$.  The cell $(1,0)$ must
begin an $H$ whose stem is at $j=1+s$, $s\in\{a,c\}$.
The unoccupied cell $(1,1)$ has no cover: a horizontal crossbar
hits the $H$ stem; a vertical crossbar hits the $V$ stem; a
vertical stem contact either overlaps row zero or puts a horizontal
arm across $x=0$ below height $b+2\le A$; a horizontal stem
contact puts a crossbar arm below row zero.  Thus only $H$ tiles
meet the boundary.

Their crossbars partition row zero.  If consecutive junction
offsets are $r_k,r_{k+1}\in\{a,c\}$, the gap between their stems
in row one has width
\begin{equation}\label{obs:gap-count}
             g_k=A-1+r_{k+1}-r_k\ge2c.
\end{equation}
Some consecutive pair satisfies $r_{k+1}\le r_k$, since an infinite
sequence in a two-element set cannot strictly increase.  Hence
$4\le g_k<A$.  If $b\le A-1$, Lemma~\ref{obs:short-run} applies.
If $b\ge A$, no horizontal crossbar fits and the cells start $V/S$
tiles.  A $V$ stem hits its neighbor, including an endpoint support
when it points outward: all the necessary vertical segments reach
its junction height, at most $a+1\le b$.  Only $S$ starters remain,
and adjacent ones overlap.

It remains to consider $c=1$, so $A=a+2$.  If $b>a$, the preceding
exclusions leave only $H$ boundary contacts and a gap with
$2\le g<A$.  For $b\ge A$ the same neighbor argument applies.
For $b=A-1$, any $S$ overlaps a neighbor; three $V$ tiles are
impossible, and when $g=2$ their outward stems hit the two
boundary stems.

Suppose $2\le b\le a$.  An admissible boundary $V$, up to
reflection and translation, has the forced neighboring pair
\[
\begin{array}{ll}
 V_0:& x=0,\ 0\le y\le a+1;\quad y=1,\ 1\le x\le b,\\
 H_0:& y=0,\ 1\le x\le a+2;\quad x=a+1,\ 1\le y\le b.
\end{array}
\]
Indeed, a $V$ with junction height $a$ makes $(1,1)$ impossible
by the four contact types used above; height one then forces
the neighboring $H$ stem to $a+1$.
For $a-b\ge3$, the row-one gap $b+1,\ldots,a$ contradicts
Lemma~\ref{obs:short-run}.  For $b\ge3$ and $a-b\in\{0,1,2\}$,
there is a blocked run of width $b$ in row two or three.
If $a=b$, use row two immediately.  If $a=b+1$, the unique
row-one gap starts $V$ or $S$: it either supplies the right wall
in row two, or is a low-junction leftward $V$ whose stem fills
row two, supplying the floor for row three.  If $a=b+2$, the
two row-one gaps force $V$ starters pointing outward.  The left
one gives these same two alternatives; the right one must have
a high junction, since a low rightward stem hits $H_0$.

For $a\ge7$ these arguments exclude every boundary $V$.
Equal consecutive $H$ junction offsets would give a forbidden
gap of length $a+1$; therefore offsets alternate between $1$ and
$a$, creating a gap of length $2a$.  At most one crossbar of
length $a+2$ fits there.  With none, three starters are impossible;
with one, its two end runs contain $a-2$ starters altogether,
at most four.  This gives $a\le6$, a contradiction.

The remaining fifteen cases are exactly
$2\le b\le a\le6$.  Proposition~\ref{obs:finite-boundary}
completes the proof.
\end{proof}

\begin{proposition}[Computer-assisted finite boundary cases]
\label{obs:finite-boundary}
For $2\le b\le a\le6$, no disjoint complete copies of
$\Pshape{a}{b}{1}{0}$ lying in $y\ge0$ cover the
boundary patch $\{0,\ldots,3(a+2)-1\}\times\{0,1,2\}$.
\end{proposition}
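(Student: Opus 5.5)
This is a finite statement: fifteen fixed tuples $(a,b)$ with $2\le b\le a\le6$, one fixed patch of width $3(a+2)$ and height three. The plan is to give, for each tuple, a finite obstruction tree in the sense of Theorem~\ref{thm:certificate-soundness}. The region is the half-plane $y\ge0$. The root configuration is empty, and the nodes query cells of the patch. Since the tile has diameter at most $a+2\le8$, every copy through a patch cell is determined by finitely many junction positions and the eight orientations. Each child's tile is therefore a concrete cell set and is checked only for containment in $y\ge0$ and disjointness from the selected tiles. Tiles may protrude beyond the patch horizontally or upward; they are kept whole, exactly as the soundness theorem requires. A terminal node is a queried patch cell with no admissible covering copy.

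To keep the trees small, I would exploit the structure already established in the proof of Theorem~\ref{obs:T-halfplane}, where possible, as pruning guidance rather than as a proof ingredient.

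\textbf{Query order.} Queries proceed column by column along row zero, then rows one and two. At each node I choose the lowest, then leftmost, uncovered patch cell. A most-constrained cell heuristic is an alternative.

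\textbf{Expected pruning.} Every boundary contact is of type $H$, $V$ or $S$. For $c=1$ the $S$ contact is killed quickly by the enclosed rectangle $\{1,\ldots,a\}\times\{1,\ldots,b-1\}$, and the forced pair $V_0,H_0$ arises. The width $3(a+2)$ is chosen so that whatever happens near the left edge, at least one full crossbar period and the ensuing row-one and row-two gaps lie inside the patch. Blocked runs of the kind in Lemma~\ref{obs:short-run} then appear within three rows.

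\textbf{Left edge.} The patch's left edge is not a wall, so tiles may hang over $x<0$. The tree must branch over those placements too, which is the reason for the generous width: a contradiction is sought in the middle of the patch.

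\textbf{Verification.} I would generate each tree by a straightforward DFS solver, a few lines of exact-cover search with all placements precomputed. I would record the trees as certificates and have them replayed in Lean, so that each branch's containment, disjointness and exhaustiveness are checked mechanically.

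\textbf{Main obstacle.} The main obstacle is not correctness but guaranteeing that every branch terminates within the three-row patch. For the small cases, for instance $a=b=2$ or $a=3,b=2$, the stem height $b$ is comparable to the patch height. Tiles hanging above row two then leave many partial configurations alive, and a branch might only die by rows three or four.

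If a tuple fails to close, I would first try reordering queries to prioritize row-one cells adjacent to stems. If that is not enough, I would enlarge the patch width; I would not change the height, since the statement fixes rows $0$ to $2$. If a genuine three-row partial cover existed, the statement would be false for that tuple. I would then have to fall back on arguing directly from the forced $V_0,H_0$ configuration. I expect, however, that the exhaustive search confirms the claim for all fifteen tuples.
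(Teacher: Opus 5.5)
Your plan matches the paper's proof: for each of the fifteen tuples, the paper gives an exhaustive case graph (693--1832 nodes). The graph queries patch cells, branches over every orientation and junction of a whole tile that may overhang the three artificial sides, and discards branches by intersection or half-plane tests; every branch ends in an uncoverable cell, and Theorem~\ref{thm:certificate-soundness} plus the Lean replay close the argument. One caution: your fallback of widening the patch would prove only a weaker statement than the one fixed here. That weaker statement would still suffice for Theorem~\ref{obs:T-halfplane}, but the paper's certificates already close at width $3(a+2)$, so the fallback is not needed.
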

\begin{proof}
The fifteen certificates allow arbitrary overhang through the three
artificial sides of the patch.  Their acyclic case graphs have
693--1832 nodes.  Every translated, oriented tile through a queried
cell is included or excluded by an explicit intersection or
half-plane test; every branch ends without a possible cover.
Theorem~\ref{thm:certificate-soundness} applies.
The formal entry point is \nolinkurl{THalfPlaneFinite.finite_no_half_plane} in \nolinkurl{HPFinite.lean};
the certificate files are \nolinkurl{t_hpcert_a_b_1.json}, with the
parameters substituted.
\end{proof}

Together with the positive constructions, these obstructions give
half-plane tileability exactly when $b=1$, and quadrant tileability
exactly when $b=c=1$.

\section{Plane obstructions for T-polyominoes}
\label{obs:T-plane}

Only $b\ge3$ and $c\ge2$ remain after the plane constructions.
We retain $A=a+c+1$.

\begin{lemma}[Long-stem gaps]\label{obs:deep-run}
Assume $b\ge A$ and $c\ge2$.
\begin{enumerate}
\item A blocked run of length $2\le g<A$ is impossible if both
endpoint walls occupy the cells at heights $a$ and $c$ above
the run.  One such wall suffices when $g=3$; neither extra wall
is needed when $4\le g<A$.
\item A blocked run of any length $g\ge3$ is impossible if one
endpoint wall extends from row $h$ through row $h+1+a$, and its
supporting tile leaves the interior of row $h+1$ unoccupied.
As before, the run and its floor and endpoints satisfy the
whole-tile support convention.
\item If $c\ge3$, a run of length $b$ is impossible under the
weaker condition that one endpoint wall continues through
row $h+1$ and its tile leaves that row's run interior unoccupied.
\end{enumerate}
\end{lemma}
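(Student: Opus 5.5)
I would follow the pattern of Lemma~\ref{obs:short-run}: classify, for a blocked run, how a tile may meet each of its cells, then show that the admissible starters cannot be packed side by side. The hypothesis $b\ge A$ makes the stem at least as long as the crossbar, and this is what drives every case.

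\emph{Local classification.} Take a run of $g$ cells in one row, with occupied floor and occupied endpoint cells, all supplied by whole selected tiles. Consider a tile through a run cell.
\begin{itemize}
\item A horizontal crossbar in the run's row needs $A$ consecutive free cells. It cannot fit when $g<A$.
\item A horizontal stem entering the run from an endpoint is blocked by that endpoint. A horizontal stem lying inside the run would have $b+1\ge A+1>g$ cells.
\item A tile whose junction lies in the run's row has a downward crossbar arm (both arms have length $\ge c\ge2$, or the stem continues), and this arm hits the floor.
\end{itemize}
As in Lemma~\ref{obs:short-run}, each run cell is therefore the bottom of a vertical segment. It is either a $V$ starter, with its crossbar vertical and its stem horizontal at relative height $r\in\{a,c\}$, or an $S$ starter, a stem tip whose horizontal crossbar lies at height $b\ge A$.

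Two adjacent $S$ starters overlap in their crossbars. An $S$ next to a $V$ collides with the $V$'s vertical segment at height $b$, because that segment has $A\le b$ cells; the case of $A=b$ exactly needs a check that the crossbar reaches the column. For two adjacent $V$ starters, the stems must point away from each other. A stem pointing toward a neighbour hits the neighbour's vertical segment, which reaches height $\ge a\ge r$. Hence every maximal block of consecutive starters has length at most two, oriented outward, and $S$ starters can appear only as isolated cells.

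\emph{Part 1.}
\begin{itemize}
\item For $4\le g<A$, a run of $g\ge3$ consecutive starters must contain three consecutive $V$/$S$ cells, which is impossible. I will also check that a run consisting entirely of $S$ starters fails, since adjacent $S$ starters overlap.
\item For $g=3$, the only escape is a pattern such as $V$-gap-$V$, which is impossible because every cell must start something. So a triple of starters is forced. The outward stem of an end $V$ at height $a$ or $c$ must clear the endpoint wall, and a single wall occupying both heights blocks it.
\item For $g=2$, the two starters point outward, at heights in $\{a,c\}$. Walls occupying both heights $a$ and $c$ on both sides block both choices.
\end{itemize}

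\emph{Parts 2 and 3.} Here $g$ may exceed $A$, so horizontal crossbars can lie in the run row. I will use the tall wall instead. The run starts at row $h$, so its first cell is adjacent to a wall column occupied from row $h$ through row $h+1+a$, with row $h+1$ free above the run.
\begin{itemize}
\item Consider the first run cell. A horizontal crossbar starting there has a stem that rises by $b$ or descends into the floor. Its vertical stem must avoid the tall wall, and the cell above it at row $h+1$ must still be coverable.
\item I will argue that the cell at row $h+1$ next to the wall is trapped. Coverage there is limited by the wall, which extends $a+1$ above it, by the tile below, and by the length $b\ge A$. This is a configuration like $(1,1)$ in Lemma~\ref{obs:quadrant}.
\item For part 3 with $c\ge3$, run length exactly $b$ should force a crossbar or stem alignment. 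The wall is needed only through row $h+1$, because both arms have length $\ge3$, which prevents a crossbar from sliding vertically past the short wall.
\end{itemize}

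\emph{Main obstacle.} I expect part 2 to be the hardest. Once horizontal crossbars are allowed, the number of configurations grows. My first attempt will be an induction along the run: show that the cell adjacent to the wall forces a $V$ whose stem recreates a tall wall one step to the right. If that does not close, I would fall back on a finite symbolic certificate in the sense of Theorem~\ref{thm:certificate-soundness}.
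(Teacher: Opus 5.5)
There is a genuine gap, and it starts in your pairwise analysis of starters. You claim that an $S$ starter next to a $V$ collides with the $V$'s vertical segment at height $b$ because that segment has $A\le b$ cells. This is backwards. The $V$'s vertical segment occupies relative heights $0,\dots,A-1\le b-1$, so it never reaches the $S$ crossbar at height $b$. An outward $V$ stem, at height $r\le a<b$, is also disjoint from that crossbar. When $b\ge A$, only two local facts hold. First, a $V$ stem hits any neighbouring starter in its direction, because every vertical segment reaches height at least $a\ge r$. Second, two adjacent $S$ starters collide. So $V$ can occur only at the ends of a block, pointing outward, and the pattern $V\,S\,V$ with outward stems is locally consistent. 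Your conclusion that blocks have length at most two is therefore false. It would make the wall hypothesis for $g=3$ superfluous, which should have been a warning sign. Your $g=3$ bullet (``$V$-gap-$V$'') misses the real escape $V\,S\,V$, which is exactly what that wall excludes. The correct Part 1 argument is that interior cells must be $S$:
\begin{itemize}
\item for $g\ge4$ there are two adjacent interior $S$ tiles;
\item for $g=3$ one tall wall excludes the outward $V$ at its end, forcing $S\,S$;
\item for $g=2$ both walls are needed to force $S\,S$.
\end{itemize}

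Parts 2 and 3 are only stated as goals, ending in a fallback to a certificate. The missing idea is a reduction to Part 1 via the leftmost horizontal crossbar in the run. Its stem must point up, since downward it hits the floor. The $r\in\{a,c\}$ cells of row $h+1$ above its left arm then form a new blocked run with $2\le r<A$:
\begin{itemize}
\item the crossbar is its floor;
\item the length-$b$ stem is a tall right wall;
\item the left wall is either the given wall or the vertical segment of the preceding $V/S$ starter.
\end{itemize}
This is exactly why the given wall must reach row $h+1+a$ and why its tile must leave row $h+1$ free. A preceding starter has no cell in row $h+1$, because a $V$ stem sits at height at least $c\ge2$ and an $S$ crossbar at height $b$. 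Part 1 excludes this new run, so there are no crossbars. The run then consists of starters, and the interior-$S$ argument finishes; for $g=3$ the deep wall excludes a $V$ at its adjacent end.

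Part 3 is the same argument, except that $c\ge3$ gives $r\ge3$. So Part 1 needs at most the single tall wall supplied by the stem, and the given wall need only occupy the endpoint cell in row $h+1$.

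Your proposed induction, in which the cell next to the wall starts a $V$ that serves as the next wall, cannot get started. Under the Part 2 hypothesis that cell never starts a $V$: an outward stem at height at most $a$ hits the wall, and an inward stem hits whatever covers the next cell.
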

\begin{proof}
In a short run only $V/S$ starters are possible.  A $V$ stem
intersects any neighboring starter in its direction, so $V$ can
occur only at an end and point outward.  A tall endpoint wall
excludes that possibility.  The stated lengths then force two
adjacent $S$ tiles, whose crossbars intersect.

For the second assertion, suppose a horizontal crossbar occurs
and select the leftmost.  It points its stem upward.  Above its
left arm is a run of length $r\in\{a,c\}$, with its bar as floor
and its upward stem as right wall.  The left wall is either the
given wall or the preceding $V/S$ starter.  Such a starter has
no horizontal cells one row above its bottom, since $c\ge2$;
its vertical segment reaches the required heights.  Both new
walls therefore satisfy the first assertion, which excludes this
crossbar.  Without crossbars the original run consists of
starters.  For $g\ge4$ its two interior positions must be $S$;
for $g=3$ the deep wall excludes $V$ at its adjacent end.
Again two adjacent $S$ tiles result.
For the third assertion use the same leftmost-crossbar argument.
The new short run has length $r\ge3$; its upward stem supplies
the single tall wall needed when $r=3$, and larger $r$ needs
none.  Only the additional endpoint cell in row $h+1$ is
therefore required.  Without crossbars, the length-$b$ run
has adjacent interior $S$ starters since $b\ge A\ge7$.
\end{proof}

\begin{theorem}\label{obs:three-long-T}
If $a\ge c\ge3$ and $b\ge3$, then $T$ does not tile the plane.
\end{theorem}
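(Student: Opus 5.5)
We argue by contradiction: we fix a hypothetical plane tiling by $T=\Pshape abc0$ with $a\ge c\ge3$, $b\ge3$, and look for a blocked run that one of the earlier gap lemmas forbids. The natural starting point is a single tile $\tau_0$ with, say, horizontal crossbar on row $0$ and stem pointing up. Since the plane has no boundary, the walls and floors must come from actual tiles. The crossbar of $\tau_0$ provides a floor of length $A$ in row $1$. The stem of $\tau_0$ splits that floor into two runs, of lengths $a$ and $c$, each with the stem as one wall. The idea is to follow how row $1$ above the crossbar is covered and to find a short gap whose walls are tall.

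\textbf{Step 1: cells just above the crossbar.} We take the run of length $c$ (or $a$) directly above one arm of $\tau_0$, bounded on one side by the stem. We classify the tiles meeting row $1$ there by the contact types $H$, $V$, $S$ from the proof of Theorem~\ref{obs:T-halfplane}. A horizontal stem contact would need a vertical crossbar arm going into the floor, and both vertical arms have length $\ge c\ge3$, so it is excluded. This is the same argument as in Lemma~\ref{obs:quadrant}, now using $c\ge3$. A horizontal crossbar lying entirely above an arm of length $<A$ cannot fit unless it overhangs past the far end of the arm. So the analysis splits according to whether the neighbouring floor, beyond the end of $\tau_0$'s crossbar, is occupied high enough.

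\textbf{Step 2: case split on $b$.}
\begin{itemize}
\item If $b\le A-1$, we use Lemma~\ref{obs:short-run} together with its remark that a consecutive block of $V/S$ starters has length at most two. A run of length $c\ge3$ or $a\ge3$ with occupied floor and walls is then blocked, as soon as both endpoints are occupied.
\item If $b\ge A$, we use Lemma~\ref{obs:deep-run}. The stem of $\tau_0$ is a wall extending through height $b\ge a+1$, so it is tall enough for part~(2). For runs of length $b$, part~(3), which needs $c\ge3$, handles the situation where only one endpoint wall is available.
\end{itemize}

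\textbf{Step 3: the other endpoint.} This is the main obstacle. The stem gives only one wall of the run; the other end lies beyond the tip of $\tau_0$'s crossbar, where the plane is not a priori occupied. To handle this we follow what covers the cell just past the crossbar end, in row $0$ and in row $1$. Either that tile supplies an occupied endpoint, giving a blocked run as in Step 2, or it extends the floor, which shifts the problem along.

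Termination should come from extremality. Among all tiles with horizontal crossbar and upward stem, we choose a configuration where a run of the form $\{\text{stem}+1,\dots\}$ is trapped between two stems over a common floor. Equivalently, we consider two horizontally consecutive upward-stem tiles whose crossbars abut, which yields a gap of length $g=A-1+r'-r\ge2c\ge6$ as in \eqref{obs:gap-count}.

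If no such abutting pair exists anywhere, every horizontal crossbar's end must be met by a vertical tile. We would then run the symmetric argument with rows and columns exchanged, applied to a vertical-crossbar tile. Since $a,c,b\ge3$ are all symmetric enough, this should again produce a blocked run with two walls.

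We expect the bookkeeping at this endpoint, namely deciding which tile covers the cell beyond a crossbar tip, to need a finite case analysis. If needed, we would certify it via Theorem~\ref{thm:certificate-soundness}, using symbolic parameters for $a\ge c\ge3$, $b\ge3$ split by $b<A$ versus $b\ge A$.
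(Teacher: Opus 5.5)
Your argument has a genuine gap at Step~3, and the gap is more than bookkeeping.

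\textbf{Where it fails.} The row-one run above an arm of $\tau_0$ has only one wall, namely the stem. Its far end lies past the crossbar tip, and nothing forces that cell to be occupied. For the same reason your Step~1 exclusion of horizontal stem contacts fails: a horizontal stem starting in that run can extend east past the tip, which puts its junction and vertical crossbar off the floor.

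The extremality device does not supply the missing wall, for three reasons.
\begin{enumerate}
\item In a plane tiling nothing guarantees two upward-stem crossbars abutting in a common row. In the half-plane proof such pairs existed only because $H$ tiles had first been shown to partition the boundary row.
\item Even when such a pair exists, its gap $A-1+r'-r$ is at least $A$ whenever $r'>r$. That is outside the range of Lemma~\ref{obs:short-run}. The half-plane proof needed an infinite sequence of offsets to find a pair with $r_{k+1}\le r_k$.
\item The fallback, that every crossbar end is then met by a vertical tile, is unjustified: a downward-stem bar or a stem can equally cover that cell.
\end{enumerate}
The ``symmetric argument'' is not carried out, and an unspecified certificate is not a proof.

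\textbf{The missing idea.} Work at the concave corner cell $(1,1)$ of an upright tile $O$ with junction $(0,0)$, rather than along the whole run. Its west and south neighbours belong to $O$, so its cover has an arm tip there extending east or north. In all but two contact types, that covering tile itself supplies the second wall:
\begin{itemize}
\item an upward-stem crossbar starting at $(1,1)$ leaves a run of length $r\in\{a,c\}$ in row two, between $O$'s stem and its own stem;
\item a rightward-stem vertical crossbar leaves the rotated run in column two;
\item a stem tip leaves a run of length $b$, horizontally or vertically.
\end{itemize}
Since $a,b,c\ge3$, these runs are excluded by Lemma~\ref{obs:short-run} when $b\le A-1$ and by Lemma~\ref{obs:deep-run} when $b\ge A$. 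Only a downward-stem horizontal bar or a leftward-stem vertical bar survives, with its junction clear of $O$.

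\textbf{Finishing the argument.} The corners $(\pm1,1)$ cannot both use vertical bars. So, after reflection, there is a downward-stem bar $U$ on $[-A,-1]\times\{1\}$. The same test at $(-1,2)$ forces a vertical bar $V$ on $x=-1$ with stem east. Then one of two things happens:
\begin{itemize}
\item if $b\le A-1$, the cell $(-2,2)$ has no cover (the subcase $b=A-1$ needs one further blocked run of length $A-1$);
\item if $b\ge A$, the stem of $V$, at height $2+r<A\le b$, hits $O$'s stem.
\end{itemize}
This is a short local argument that needs neither an extremal configuration nor a certificate.
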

\begin{proof}
First suppose $b\le A-1$.  Place an upright tile $O$ with
junction $(0,0)$.  At $(1,1)$ an existing cell immediately west
and immediately south forces the covering tile to have an
arm endpoint there.  An upward-stem horizontal crossbar creates
a blocked row of length $r\in\{a,c\}$ above its left arm.
A rightward-stem vertical crossbar creates the rotated blocked
run of length $r$.  A perpendicular-stem tip creates a blocked
run of length $b$, horizontally or vertically.  All are excluded
by Lemma~\ref{obs:short-run}.  Thus the only possibilities are a
horizontal crossbar with stem down or a vertical crossbar with
stem left, with their junctions far enough away to avoid $O$.

The same alternatives hold at $(-1,1)$.  Both corners cannot
use vertical crossbars: their columns are two units apart, and
the inward stem at the lower junction intersects the other bar.
Reflecting if necessary, obtain a downward-stem tile $U$ whose
crossbar is $[-A,-1]\times\{1\}$.

Now cover $p=(-1,2)$.  Its occupied lower and right neighbors
again force an endpoint contact.  A horizontal crossbar either
hits $U$ with its downward stem or creates a blocked row of
length $r$ with an upward stem.  A horizontal perpendicular
stem puts its junction above $U$ and hits it.  A vertical
perpendicular stem creates a rotated blocked run of length $b$.
A leftward-stem vertical crossbar creates a rotated run of
length $r$.  Consequently $p$ starts a vertical crossbar $V$
with stem right, occupying $x=-1$, $2\le y\le A+1$.

At $q=(-2,2)$ the same cases exclude horizontal crossbars and
leftward-stem vertical crossbars.  A rightward stem or vertical
perpendicular-stem cover intersects $V$.  A horizontal
perpendicular stem intersects $U$ unless $b=A-1$.  In that last
case it has junction $(-A-1,2)$ and fills row two from $-A$
through $-2$, leaving above itself a blocked run of length
$A-1$ between its crossbar and $V$.  This is also impossible.

For $b\ge A$, use Lemma~\ref{obs:deep-run} instead.  The
away-pointing crossbar cases create runs of length $a$ or $c$;
the required deep wall is supplied by a length-$b$ stem.
The perpendicular-stem cases create runs of length $b$, with
the same stem as floor and the adjacent bars as straight
endpoint supports.  The third part of the lemma applies,
including its requirement one row beyond the run.  Thus the
antiparallel pair $O,U$ and the forced crossbar $V$ at $p$ are
obtained as above.  Its rightward stem already hits $O$:
its junction height is $2+r\le a+2<A\le b$.
\end{proof}

For $c=2$ the exceptional equality $b=a+3$ has a plane
construction.  The ranges on either side require different arguments.

\begin{proposition}[Computer-assisted short-stem obstruction]
\label{obs:two-short}
If $a\ge2$ and $3\le b<a+3$, then
$\Pshape{a}{b}{2}{0}$ does not tile the plane.
\end{proposition}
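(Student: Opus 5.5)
We will argue along the lines of the proof of Theorem~\ref{obs:three-long-T}, now with $c=2$, so $A=a+3>b$ and Lemma~\ref{obs:short-run} is available throughout. Fix an upright tile $O=\Pshape{a}{b}{2}{0}$ with junction $(0,0)$ in a hypothetical plane tiling. We then study the concave corners at $(1,1)$ and $(-1,1)$, where the stem meets the crossbar.

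The first step repeats the endpoint-contact analysis. A cell with occupied west and south neighbors (or east and south) must contain an arm endpoint of its covering tile. Any configuration that leaves a blocked run of length $a$ or $b$ is excluded by Lemma~\ref{obs:short-run}, because $3\le a,b<A$. The difference from the case $c\ge3$ is the short arm. An away-pointing crossbar whose length-two arm lies against $O$ produces a gap of length $c=2$, and the lemma does not forbid this. So for each corner the surviving covers form a finite, explicitly parametrized list: down-stem horizontal crossbars, left-stem vertical crossbars, and the new short-arm contacts. Each is described by symbolic offsets in $a$ and $b$.

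The second step is the main work. Since $c=2$ breaks the clean three-step forcing $O\to U\to V$ that closed the earlier theorem, we expect to need a symbolic obstruction tree in the sense of Theorem~\ref{thm:certificate-soundness}. The root is $O$, and the parameter domain is $a\ge2$, $3\le b\le a+2$. At each node we query the lowest-then-leftmost uncovered cell near $O$ and branch over all eight orientations and all junction positions. We prune a branch by overlap tests, which are linear inequalities in $a,b$, or by a blocked-run leaf from Lemma~\ref{obs:short-run}. The condition $b<a+3$ should enter through the tall walls that stems provide. In the construction \eqref{eq:fourcopy}, the equality $b=a+3$ is exactly what lets a stem of length $b$ sit flush against the crossbar of an adjacent tile while filling the remaining slot. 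When $b<a+3$, that slot leaves a short run of length at least $3$ above a crossbar, or a run of length $a+3-b\in\{1,2\}$ that the next query cannot fill.

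We would split the symbolic tree into a few regimes: small $b$ ($b=3,4$), generic $5\le b\le a$, and $b\in\{a+1,a+2\}$. For small $a$, the cases $a\le$ some constant such as $6$, we would use separate finite certificates, because several symbolic branches depend on inequalities like $a\ge b+3$. Within each regime, the terminal nodes are either overlaps or short-run contradictions. Soundness then follows from Theorem~\ref{thm:certificate-soundness} after fixing an arbitrary admissible pair $(a,b)$.

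We expect the main obstacle to be controlling the short-arm branches, where the gap of length $2$ is not itself forbidden. This allows chains of antiparallel T's to propagate, in the way the plane construction does. Our first approach would be to follow such a chain for two or three tiles until the stem lengths accumulate a mismatch of $a+3-b>0$ against a crossbar, which creates a forbidden run. If these chains do not close after a bounded depth, we would add a potential argument similar to \eqref{eq:potential}, showing that the mismatch grows along the chain.
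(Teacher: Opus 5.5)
Your route is the same as the paper's: normalize some tile of a hypothetical plane tiling to an anchored upright $O$, examine the cells at its concave corners, and close every branch with one symbolic case tree over $a\ge2$, $3\le b\le a+2$, via Theorem~\ref{thm:certificate-soundness}. The gap is that this tree \emph{is} the proof, and you only anticipate it (``we expect'', ``we would split'', ``if these chains do not close after a bounded depth''). The paper's proof consists of supplying exactly that object. It is a single certificate with 300 nodes and 208 terminal contradictions, adding at most seven tiles to the anchored tile. It is valid under only the displayed inequalities, so there is no regime split, no separate small-$a$ certificate, and no potential. At each query all eight orientations and every integer junction are enumerated, and the resulting arithmetic is replayed in Lean. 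Until you exhibit such a tree, or give a hand argument that every branch dies within a depth bounded independently of $a$ and $b$, you have a search strategy rather than a proof. Your remark that the length-two short-arm gaps let chains propagate marks exactly the point where termination must be shown, and it is not shown.

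Two parts of the plan also need repair. Your first step asserts $3\le a$, but the hypothesis permits $a=2$, giving $\Pshape{2}{3}{2}{0}$ and $\Pshape{2}{4}{2}{0}$. There a blocked run of length $a$ is not excluded by Lemma~\ref{obs:short-run}. This is harmless only because at $a=c=2$ those contacts coincide with the short-arm contacts you already keep; the stated inequality is still wrong.

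More seriously, your fallback potential argument does not transfer to the plane. In Theorem~\ref{thm:gunhs} the rank is bounded below only because the half-strip has a ceiling. In $\Z^2$ there is no boundary, so a mismatch that merely grows along an infinite chain of corners gives no contradiction. For $b=a+3$ the forced configurations genuinely extend forever, since \eqref{eq:fourcopy} tiles the plane. Any argument must therefore use $b<a+3$ quantitatively, with an intrinsic bound on the mismatch, for instance that a blocked run must stay shorter than $A$. That turns the argument back into a bounded-depth one. So if the finite tree failed to close, your plan would have no way to finish. What the paper establishes, and what you still need, is that it does close.
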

\begin{proof}
The symbolic certificate
\nolinkurl{t_plane_negative_corners_two_short_d10.json}
has 300 nodes and 208 terminal contradictions, and adds at most
seven tiles to an anchored initial tile.  Its parameters satisfy
only the displayed inequalities.  At each node all eight
orientations and an arbitrary integer junction of a tile through
the selected cell are tested.  The resulting exhaustive
arithmetic alternatives and the geometric replay are checked
in Lean.  Normalize a tile in a hypothetical plane tiling to
the anchor and apply Theorem~\ref{thm:certificate-soundness}.
The unanchored formal theorem is
\nolinkurl{short_stem_two_arm_no_plane}.
\end{proof}

\begin{theorem}\label{obs:two-long}
If $a\ge2$ and $b>a+3$, then $\Pshape{a}{b}{2}{0}$ does not
tile the plane.
\end{theorem}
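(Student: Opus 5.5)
We follow the strategy of Theorem~\ref{obs:three-long-T}: fix a tile, examine its concave corners, and show that every cell near a junction must begin some forced tile, until the forced tiles collide. Here $c=2$, so $A=a+3$, and the hypothesis $b>a+3$ means $b\ge A+1$. This puts us in the long-stem regime of Lemma~\ref{obs:deep-run}, with strict inequality. The case $b=A$ is the construction of Proposition~\ref{prop:tplane}, which tells us where the argument has to use $b\ne A$.

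First, we record which local runs are blocked.
\begin{itemize}
\item Lemma~\ref{obs:deep-run}(1) excludes runs of length $4\le g<A$ outright. It excludes runs of length $2$ or $3$ when deep endpoint walls are present.
\item Part~(2) of that lemma excludes runs of any length $g\ge3$ beside a wall reaching $a+1$ rows past the run. A length-$b$ stem supplies such a wall, since $b\ge a+4$.
\item Part~(3) is unavailable because $c=2$.
\end{itemize}
So runs of length $b$ created by perpendicular-stem contacts need separate treatment. Our plan is to supply the deep wall for them from the adjacent stem, using $b>A$. This gives the wall at least $a+2$ rows, which should be what part~(2) requires.

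Next, place an upright tile $O$ with junction $(0,0)$ and its stem going up. Cover $(1,1)$ and $(-2,1)$, the cells beside the stem and above the two crossbar arms. We run the same case analysis as in Theorem~\ref{obs:three-long-T}.
\begin{itemize}
\item A cover with its stem pointing away from $O$ creates a run of length $a$ or $2$.
\item The length-$a$ run is excluded by part~(2), using the stem of $O$ as the deep wall.
\item The length-$2$ run is the delicate case. Part~(1) needs both endpoint walls occupied at heights $a$ and $c$. One wall is the stem of $O$, which is long enough. The other wall must come from the new tile's crossbar, which has length $A$ and so reaches heights $a+2>a$.
\end{itemize}
These exclusions force antiparallel neighbours as before: a downward-stem tile $U$ over one arm, and then a vertical crossbar $V$ with its stem toward $O$ at the next cell up. The stem of $V$ has junction height at most $2+a<b$. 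It therefore hits the stem of $O$, which is the contradiction.

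The main obstacle is the family of configurations that realize the $b=a+3$ four-copy tiling: two reflected T's whose stems interlock in the $\sigma$ orientation. In these, the runs between stems have length exactly $b-A$ or $b-A+1$. When $b=A$ those lengths degenerate to $0$ and the configuration closes up into the tiling. When $b>A$ they leave a run of length between $1$ and $b-A$ that is not covered by Lemma~\ref{obs:deep-run}.

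We would handle this as follows.
\begin{itemize}
\item Use part~(2) for gaps of length at least $3$.
\item For gaps of length $1$ or $2$, show directly that any filler must be a vertical segment, since a horizontal crossbar is too long. Its tile then extends at least $A$ cells and collides with one of the two interlocked stems, because $b-A\ge1$ shifts the stem tips past each other.
\end{itemize}
If this direct argument becomes too branched, the fallback is a symbolic certificate in the style of Proposition~\ref{obs:two-short}. It would be parametrized by $a\ge2$ and $e=b-a-4\ge0$, anchored at $O$, and justified by Theorem~\ref{thm:certificate-soundness}.
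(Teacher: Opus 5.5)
Your argument breaks at the concave-corner step, where you say the exclusions ``force antiparallel neighbours as before.'' When $c=2$, two covers of $(1,1)$ other than the downward-stem bar escape every part of Lemma~\ref{obs:deep-run}.

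The first is the outward vertical crossbar with lower arm two, $G=(1,3;\,b,a,0,2)$, written as junction followed by east, north, west and south arms. Its crossbar is $x=1$, $1\le y\le a+3$, and its stem runs along $y=3$ to $x=b+1$. The length-two run it creates is $(2,1),(2,2)$. The floor of that run is the crossbar of $G$, and its walls are the stem of $G$ and the east arm of $O$. They are not the stem of $O$ and ``the new tile's crossbar''; that description fits the upward-stem horizontal cover, which is indeed excluded. The east arm of $O$ stops at $x=a$, so it lacks the wall cells part~(1) needs. The run can in fact be filled:
\begin{itemize}
\item the downward bar $R$ on $[2,A+1]\times\{1\}$, whose stem at $x=a+2$ just clears $O$, covers $(2,1)$;
\item the leftward perpendicular stem $W$ with junction $(b+2,2)$ covers $(2,2)$;
\item $R$ and $W$ are disjoint from each other and from $O$ and $G$.
\end{itemize}

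The second survivor is the vertical perpendicular-stem tip at $(1,1)$. Its length-$b$ run lies in column $x=2$, with the new stem as its \emph{floor}. Neither wall (the crossbar of $O$, or the new crossbar at height $b+1$) reaches column $x=a+2$, as part~(2) requires. So your plan to take the deep wall from the adjacent stem fails here, and part~(3) is unavailable because $c=2$. Hence the antiparallel pair is never forced, and the collision of $V$ with the stem of $O$ disposes of only one branch. (Also, the second concave corner is $(-1,1)$, not $(-2,1)$.)

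The missing idea is that the surviving cover must be propagated rather than refuted locally. The paper proceeds as follows.
\begin{itemize}
\item It excludes the antiparallel pair outright, essentially by your argument at $(-1,2)$.
\item The stem-tip cover cannot occur at both corners, since the two crossbars at height $b+1$ would have junctions two apart. So after reflection $G$ covers $(1,1)$, which forces $R$ at $(2,1)$ and $W$ at $(2,2)$.
\item The upper-left concave corner of $W$ is taken by the stem tip of $G$. Its lower-left corner therefore needs the other admissible cover, $Z=(b-1,1;\,2,0,a,b)$.
\item Unless $R$ and $Z$ already overlap, the $b$ cells $[2,b+1]\times\{1\}$ under the stem of $W$ must be covered by consecutive downward crossbars. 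There are at least two of them because $b>A$.
\item Consecutive junctions enclose runs of width at least four, walled by length-$b$ stems. Lemma~\ref{obs:deep-run}(2), applied downward, excludes these runs.
\end{itemize}
This last step is where $b\ne A$ is actually used. Your ``interlocking'' paragraph never reaches this structure. The gap lengths $b-A$ and $b-A+1$ are asserted without derivation, and the collision argument for gaps of length one or two is not carried out. A certificate that has not been constructed does not fill the gap.
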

\begin{proof}
Here $A=a+3$ and $b\ge A+1$.  We describe placements by their
junction followed by their east, north, west, and south arms.
Apply Lemma~\ref{obs:deep-run} at concave corners.  An
antiparallel pair, with $O$ upright at zero and a downward-stem
bar $U$ in $[-A,-1]\times\{1\}$, is impossible: at $(-1,2)$
horizontal bars make short deep-wall gaps, a rightward vertical
bar hits $O$, and a leftward vertical bar makes a rotated
short gap.  Perpendicular-stem contacts make length-$b$ gaps;
the original stem or the bar of $U$ supplies the deep wall.

At a concave corner of an upright tile the same tests leave
only an outward vertical crossbar with lower arm two, or a
vertical perpendicular-stem tip.  The two concave corners
cannot both use perpendicular-stem tips: their horizontal
crossbars have junctions two units apart at the same height.
Thus, after reflection, the corner $(1,1)$ is covered by
\[
                 U=(1,3;\ b,a,0,2).
\]
Covering $(2,1)$ and $(2,2)$ now forces respectively a
downward-stem horizontal bar $R$ and a leftward perpendicular
stem $W$.  Indeed, after transposition they are two starters;
two $S$ tiles overlap, inward $V$ stems intersect, and the upper
wall excludes an outward $V$ at the upper cell.  Thus
\[
\begin{split}
 R\text{ has bar }&[2,A+1]\times\{1\},\\
 W\text{ has junction }&(b+2,2)\text{ and left stem of length }b.
\end{split}
\]
Either vertical arm order of $W$ is allowed.  Its upper-left
concave corner is covered by the stem tip of $U$, so its
lower-left corner must have the other admissible cover.  This
forces
\[
                 Z=(b-1,1;\ 2,0,a,b).
\]

Consider the $b$ cells $[2,b+1]\times\{1\}$ below the stem
of $W$.  Their outside endpoints belong to $U,W$, and their
two end cells belong to the distinct downward bars $R,Z$.
If those bars overlap, we are done.  Every other boundary
contact is, downward, an $H$, $V$, or $S$.  An interior $V$
intersects its neighbor in its stem direction: a $V/S$ neighbor
has a sufficiently long vertical segment, while an $H$
neighbor puts its stem at distance at most $a+1<b$.
An interior $S$ must therefore have an $H$ neighbor in the
direction of its length-$a$ arm.  Avoiding overlap forces that
neighbor's junction to distance $a+1$ and encloses an
$a$-by-$(b-1)$ rectangle.  No orientation fits: the two
possible widths are $A>a$ and $b+1>a$.

Hence the interval consists of consecutive downward $H$ bars,
and contains at least two since $b>A$.  Consecutive junction
offsets $r,s\in\{2,a\}$ enclose, immediately below their bars,
a run of width $A-1+s-r\ge4$.  Their length-$b$ stems provide
the straight deep walls.  Lemma~\ref{obs:deep-run}, applied
downward, gives the final contradiction.
\end{proof}

\section{Obstructions for genuine crosses}
\label{obs:crosses}

\begin{lemma}\label{obs:cross-halfplane}
No family of crosses with four positive arms tiles a half-plane,
even if the crosses have different arm lengths.
\end{lemma}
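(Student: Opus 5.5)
The proof will be a direct boundary argument on row zero of $\Z\times\N$. No certificate is needed, and the arm lengths may vary from tile to tile. The key observation concerns any cross $C$ with four positive arms, in any of the eight orientations. Its cells consist of one vertical segment and one horizontal segment meeting at the junction, and the horizontal segment lies strictly above the lowest cell, because the downward arm has positive length. Hence the bottom row of $C$ contains exactly one cell, the tip of its downward arm. The same holds with rows and columns exchanged: the leftmost and rightmost columns of $C$ also contain single cells, but only the row statement is needed.

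First step: consider a hypothetical tiling of $\Z\times\N$ by such crosses and the tile $C_x$ covering a boundary cell $(x,0)$. Since $C_x$ lies in $y\ge0$, the cell $(x,0)$ must be in its lowest row, so it is the downward tip. Consequently $C_x$ has vertical segment $\{x\}\times[0,t_x]$ and horizontal segment in row $d_x$, with $1\le d_x<t_x$. Its horizontal segment extends at least one cell to each side of column $x$, because the east and west arms are positive. In particular, $C_x$ contains $(x-1,d_x)$ and $(x+1,d_x)$. Distinct boundary cells therefore lie in distinct tiles.

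Second step: compare the adjacent tiles $C_0$ and $C_1$ through the cells $(0,0)$ and $(1,0)$. The tile $C_0$ contains $(1,d_0)$, and $C_1$ contains the whole column segment $\{1\}\times[0,t_1]$ with $t_1>d_1$. The tile $C_1$ contains $(0,d_1)$, and $C_0$ contains $\{0\}\times[0,t_0]$ with $t_0>d_0$. If $d_0\le d_1$, then $(1,d_0)$ lies in both $C_0$ and $C_1$. If $d_1<d_0$, then $(0,d_1)$ lies in both. Either case contradicts disjointness, since $C_0\ne C_1$.

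I expect no real obstacle. The only point that needs care is justifying that a tile meeting row zero does so only at the tip of an arm pointing downward. This uses positivity of all four arms in every one of the eight orientations: a horizontal segment on row zero would leave a vertical arm below it, outside the region. Once this is stated, the two-tile comparison above finishes the proof. It applies verbatim to families with different arm lengths, because only positivity of the arms was used.
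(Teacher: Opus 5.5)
Your proof is correct and is essentially the paper's argument. You identify each boundary cell as the tip of a downward arm, then compare the junction heights of the tiles covering $(0,0)$ and $(1,0)$. In either order of the heights, the lower tile's horizontal arm meets the other tile's vertical segment, and only positivity of the arms is used.
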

\begin{proof}
A boundary contact must be the tip of a downward arm.  The
tiles covering $(0,0)$ and $(1,0)$ have junctions $(0,r)$ and
$(1,s)$, with $r,s\ge1$.  If $r\le s$, the first tile's right
arm intersects the second tile's vertical arm at $(1,r)$.
If $s\le r$, reverse the roles.  Either way the tiles overlap.
\end{proof}

\begin{theorem}\label{obs:long-cross}
If $a,b,c,d\ge2$, then $\Pshape{a}{b}{c}{d}$ does not tile
the plane.
\end{theorem}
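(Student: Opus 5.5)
Let $C=\Pshape abcd$ with $a,b,c,d\ge2$, and suppose for contradiction that some exact tiling of $\Z^2$ by copies of $C$ exists. Every copy is again a cross with four arms of length at least two: a junction $J$ and four straight arms. The plan is a local argument at one concave corner of a fixed tile $O$, in the style of the T-plane obstructions and in the spirit of Lemma~\ref{obs:cross-halfplane}.

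\emph{Setting up the corner.} Translate so that $O$ has junction $(0,0)$. Its east and north arms cover $(1,0),(2,0)$ and $(0,1),(0,2)$. Consider the cell $p=(1,1)$. Its west neighbor $(0,1)$ and south neighbor $(1,0)$ both belong to $O$. Let $X$ be the tile covering $p$.

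\emph{Only an arm tip can cover $p$.} I first argue that $X$ must contain $p$ as the tip of one of its arms.
\begin{itemize}
\item If $p$ were the junction of $X$, or an interior cell of a horizontal arm of $X$, then $X$ would contain a cell horizontally adjacent to $p$ on each side. In particular it would contain $(0,1)$, which lies in $O$.
\item The same happens vertically with $(1,0)$ if $p$ lies on a vertical line through the junction of $X$ and is not an endpoint.
\end{itemize}
Hence $p$ is the endpoint of an arm of $X$. That arm must also point away from $O$. A tip of an east-pointing arm, for instance, has the arm extending west through $(0,1)$.

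\emph{Reduction to a blocked run.} So $X$ has an arm pointing west or south ending at $p$, running to the east or north of $p$. By the symmetry $\sigma$ about the diagonal through $O$'s junction (after reindexing arm lengths), assume the arm runs east. Then the junction of $X$ is $(1+t,1)$ with $t\ge2$. Its perpendicular arms have lengths at least two, so its south arm occupies $(1+t,0)$ and $(1+t,-1)$.

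The east arm of $O$ runs along row $0$ to $(a,0)$. Since the cells in between cannot overlap, we need $1+t>a$. The cells $(a+1,0),\ldots,(t,0)$ then form a run bounded on the west by $O$'s arm tip and on the east by $X$'s south arm, with $X$'s arm as ceiling.

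Rather than pursue this run, I would argue directly at the next cell $(1,2)$, or symmetrically $(2,1)$, with the same tip-only reasoning. The cell $(2,1)$ is in $X$. Look instead at the cell $q=(1,2)$, whose south neighbor $p$ lies in $X$ and whose west neighbor $(0,2)$ lies in $O$. By the same argument it is covered by an arm tip, with its arm going east or north.
\begin{itemize}
\item East is impossible: the vertical arms of that tile's junction would hit $X$'s horizontal arm in row $1$. This uses that vertical arms have length at least $2$, and actually only that the south arm is positive, so the junction row gets a south cell in row $1$.
\item North places a junction at $(1,2+u)$ with $u\ge2$. Its west arm, of length at least two, reaches $(0,2+u)$ and $(-1,2+u)$. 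This collides with $O$'s north arm exactly when $2+u\le b$. So we need $u\ge b-1$, and the west arm then passes over the column $x=0$ above $O$'s tip. That is fine as long as $O$'s north arm ends at $(0,b)$.
\end{itemize}

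\emph{Main obstacle.} The hardest step is the case analysis when the forced tips lead to ``staircase'' configurations that avoid immediate overlap, as in the $u\ge b-1$ branch above. I expect the clean finish to be an infinite-descent or pinching argument:
\begin{itemize}
\item each concave corner forces a tip whose arm is parallel to and abuts an existing arm;
\item this creates a new concave corner, of the same kind, between two parallel arms one unit apart;
\item the cell diagonally inside such a corner has two occupied neighbors from parallel-adjacent arms;
\item any tip covering it must have its perpendicular arms (length $\ge2$) cross one of those arms.
\end{itemize}
If this pinching does not close immediately, I would fall back on a finite symbolic certificate via Theorem~\ref{thm:certificate-soundness}. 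It would be anchored at $O$, query the cells around $(1,1)$, and use parameters ranging over $a,b,c,d\ge2$, just as in Proposition~\ref{obs:two-short}.
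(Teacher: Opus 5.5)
\textbf{There is a genuine gap: your proposal never reaches a contradiction.} The tip analysis at $p=(1,1)$ is correct, and so is the diagonal reduction to a tile $X$ with junction $(1+t,1)$, $t\ge2$. But you then examine $q=(1,2)$ in isolation. You leave open the branch in which $q$ is the tip of the south arm of a tile with junction $(1,2+u)$, $u\ge b-1$. For that branch you defer to an unspecified ``descent or pinching'' scheme, or to a symbolic certificate you do not supply. That deferred step is the entire content of the obstruction.

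Your exclusion of the east case at $q$ is also incomplete. If that tile's junction $(1+u,2)$ has $u\ge t$, its south arm misses the west arm of $X$. What excludes it is that its row-$2$ segment then contains $(1+t,2)$, a cell of the north arm of $X$.

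\textbf{The missing step.} Treat $(1,2)$ and $(2,2)$ together as cells of a pocket:
\begin{itemize}
\item its floor is $\{(x,1):1\le x\le t\}$, the west arm of $X$ (here $t\ge2$ puts $(2,2)$ over the floor);
\item its walls are $(0,2)\in O$, since $b\ge2$, and $(1+t,2)\in X$.
\end{itemize}
A tile whose junction $(x_0,2)$ lies in row two cannot cover either cell. For $1\le x_0\le t$ its south arm enters the floor; otherwise its horizontal line crosses a wall.

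Hence both cells are tips of downward arms of two distinct tiles, with junctions $(1,y_1)$ and $(2,y_2)$. Here $y_1,y_2\ge3$, because a lower junction would contain $(1,1)$ or $(2,1)$. If $y_1\le y_2$, the east arm at $(1,y_1)$ contains $(2,y_1)$, which is either the other junction or a cell of its south arm. If $y_2<y_1$, the west arm at $(2,y_2)$ meets the first tile's south arm. Either way two tiles overlap. This is exactly the paper's final step.

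\textbf{Comparison with the paper.} The paper reaches the same pocket differently. It normalizes a longest arm $M$ to point east and uses a pinwheel count over the four diagonal cells $(\pm1,\pm1)$, so that the east-supplied corner has junction exactly $(M+1,1)$. Your diagonal reflection is in fact enough, because the pocket argument works for every floor length $t\ge2$. Once the pocket step is added, your argument is complete and purely local; no infinite descent and no computer certificate are needed.
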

\begin{proof}
Let $M$ be the longest arm.  Normalize a tile $O$ so its
junction is zero and a longest arm points east.  At each of
$(\pm1,\pm1)$, the covering tile ends an arm.  Assign the
diagonal cell to the cardinal direction from which this arm
arrives.  Two diagonal cells cannot be assigned the same
direction: for example, junctions $(r,1),(s,-1)$ supplying
both eastern cells would overlap, because the lower-reaching
arm at the nearer junction meets the farther horizontal arm.
All arms have length at least two, as required for this test.
Thus all four directions occur exactly once.

Reflect vertically so the northeast diagonal is supplied from
the east.  Its covering tile has junction $(r,1)$ with $r>M$,
and west arm ending at $(1,1)$.  Since that arm has length at
most $M$, necessarily $r=M+1$.  Its horizontal bar supplies a
floor below $(1,2)$ and $(2,2)$; its vertical bar and that of
$O$ supply walls at $x=M+1$ and $x=0$.
Neither cell can lie on a new horizontal bar: a junction
between the walls sends an arm into the floor, while a
junction outside sends its bar through a wall.
Both must therefore belong to distinct vertical arms arriving
from above.  At the lower of their two junction heights, a
horizontal arm intersects the other vertical arm.  Contradiction.
\end{proof}

\begin{figure}[htbp]\centering
\includegraphics[width=.60\linewidth]{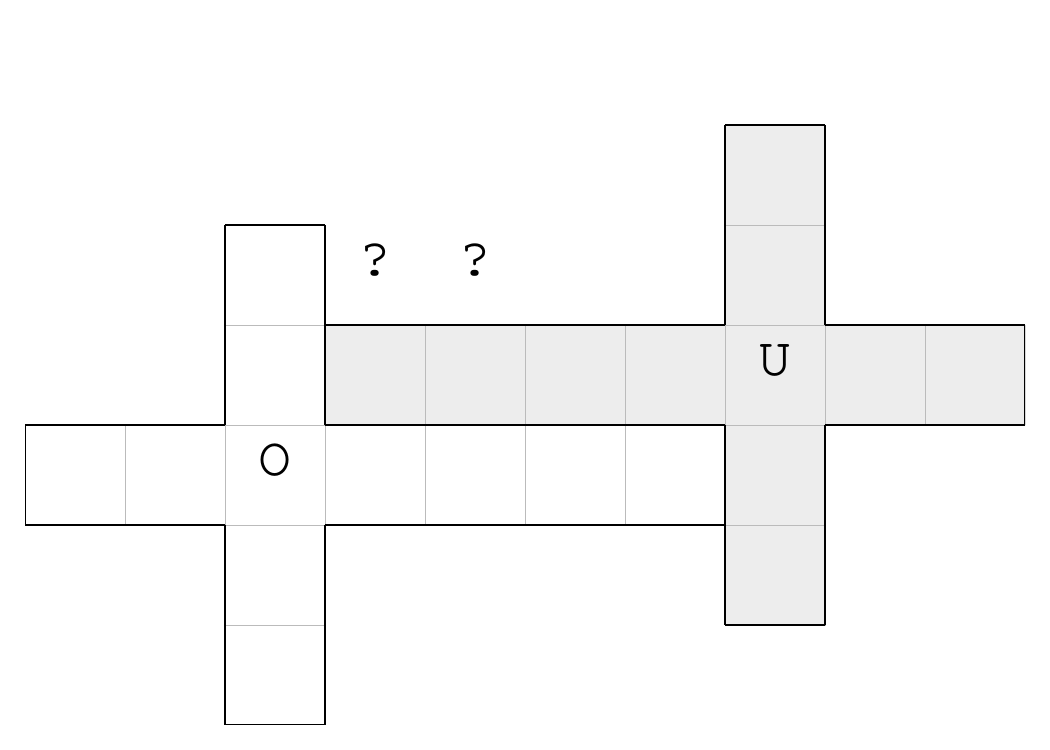}
\caption{The trapped adjacent cells in the all-long-arm cross argument.
The two supporting tiles force vertical arrivals from above, which
must intersect. This local picture illustrates Theorem~\ref{obs:long-cross}.}
\end{figure}

\begin{proposition}[Computer-assisted unit-arm cross obstructions]
\label{obs:unit-cross}
Neither of the following families tiles the plane:
\[
 \Pshape{a}{1}{c}{d}\quad(2\le a\le c,\ d\ge2),
 \qquad
 \Pshape{1}{1}{c}{d}\quad(2\le c\le d).
\]
\end{proposition}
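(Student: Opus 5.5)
The plan follows the pattern of Proposition~\ref{obs:two-short}. I would produce a finite symbolic obstruction tree anchored at one tile and invoke Theorem~\ref{thm:certificate-soundness}. The hand argument of Theorem~\ref{obs:long-cross} should supply the first levels of that tree.

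I start by normalizing a tile $O$ of a hypothetical plane tiling so that its junction is at the origin in the stated orientation. I then query the four diagonal cells $(\pm1,\pm1)$. Each is flanked by two arms of $O$, so its covering tile must end an arm there. As in Theorem~\ref{obs:long-cross}, I assign each diagonal cell to the cardinal direction from which that arm arrives.

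The injectivity test of that proof breaks down exactly when a unit arm is involved. If the lower-reaching arm at the nearer junction has length one, it may fail to meet the farther bar. So the first step is to list which coincidences of directions remain possible when the unit arm is north, or north and east. In each surviving pattern the covering tiles are determined up to a bounded offset, with junction coordinates among values such as $M+1$ or $1+\ell$ for some arm length $\ell$. After that I keep querying the concave cells created between $O$ and its neighbours.

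Two closing arguments do most of the work.
\begin{itemize}
\item \textbf{Trapped-pair argument.} This is the one from Theorem~\ref{obs:long-cross}. Two adjacent cells between a floor and two walls must be reached by distinct vertical arms arriving from the far side, and these arms collide at the lower junction height. It applies whenever the arms arriving into the pocket have length at least two. The hypotheses $d\ge2$ and $a,c\ge2$ in the first family, and $c,d\ge2$ in the second, are meant to guarantee this in every branch.
\item \textbf{Enclosed-rectangle argument.} This is the argument used for $S$ tiles in Theorem~\ref{obs:T-halfplane}: a pocket of width at most $\max$ arm length and height one less than an arm cannot contain any copy.
\end{itemize}

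Because the parameters are unbounded, I would make every branch condition a linear inequality in $a,c,d$, for example whether $r\le s$ or whether a junction lies between two walls. The tree would then be a single symbolic family, checked node by node as the paper describes. For $\Pshape{1}{1}{c}{d}$ the two unit arms are adjacent, so the tile behaves like an L with two short spurs. I expect there the relevant pocket to be the one in the quadrant opposite the long arms, and I would anchor the search at that corner.

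The main obstacle is that the unit arms allow arms to slide past each other, and this destroys the clean four-directions bijection. The branching therefore grows, and some branches may not close locally within a bounded number of added tiles. This is especially likely for small values ($a=2$, or $c=d$), where extra coincidences of arm lengths open new placements. I would first run the symbolic search with depth bounded near ten tiles, as for the short-stem T case. Any small parameter values where the symbolic tree fails to close I would split off and certify separately as fixed-parameter certificates, as was done for $a=4,5$ in Proposition~\ref{prop:transition}. Finally, soundness of each tree combined with translation and symmetry normalization of the anchor gives the unanchored statement for both families.
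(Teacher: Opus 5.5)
Your overall route is the paper's route. You build an anchored finite symbolic obstruction tree with branch conditions linear in the parameters, close it with Theorem~\ref{thm:certificate-soundness}, and remove the anchor by translation and dihedral normalization. As written, however, this is a search plan rather than a proof. The step you give for the case where the search fails is unsound. The cases $a=2$ (with $c,d$ free) and $c=d$ (with $c$ free) are unbounded subfamilies, not small values. No finite collection of fixed-parameter certificates, as for $a=4,5$ in Proposition~\ref{prop:transition}, can cover a branch that fails along them. Such a branch would need its own symbolic tree carrying the equality as a parameter condition, or a non-local argument like the rank descent of Section~\ref{sec:guns}. So your proposal proves the statement only if the symbolic search closes for every parameter value, which you leave open.

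In the paper that condition holds, and the tree is much smaller than you expect. The only first query is $(-1,-1)$, the diagonal cell between the two long arms $W$ and $S$, of lengths $c,d\ge2$. A diagonal next to a unit arm would leave one arrival direction essentially unconstrained. The cover of $(-1,-1)$ has two options:
\begin{itemize}
\item it ends an east arm, with junction $(-1-e,-1)$ and $e\ge c$, so that its north arm clears the west arm of $O$;
\item it ends a north arm, with junction $(-1,-1-n)$ and $n\ge d$, so that its east arm clears the south arm of $O$.
\end{itemize}
For $\Pshape{a}{1}{c}{d}$ this leaves twelve placements: east arm $c$, $d$ or $a$, or north arm $d$, $c$ or $a$, with two orientations each. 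The side conditions $d\ge c$, $a=c$, $c\ge d$, $a\ge d$ apply where needed. For $\Pshape{1}{1}{c}{d}$ it leaves eight: east arm $c$ or $d$, or north arm $d$, or $c$ when $c=d$. Each first placement needs one further tile, after which a selected cell has no disjoint cover.

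Coincidences of arm lengths are simply branches whose parameter condition is an equality. For example, the paper's displayed branch $(-1,-1-c;\,1,c,d,1)$ is disjoint from $O$ only when $d\le c$, that is $c=d$, and it closes symbolically. The four-diagonal bijection analysis and the depth-ten search budget are unnecessary.
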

\begin{proof}
The two symbolic trees have respectively 89 and 57 nodes.
Each starts with the prototile at zero and selects $(-1,-1)$.
In the adjacent-unit case there are eight first placements;
in the one-unit case there are twelve.  Each first placement
requires one further tile, after which a selected cell has no
disjoint cover.  For example, an adjacent-unit branch places
\[
 (-1,-1-c;\ 1,c,d,1),\qquad
 (-2-c,-c;\ c,d,1,1),
\]
and then queries $(-c,1-c)$, which cannot be covered.
All alternatives, including arbitrary integer junctions, are
checked symbolically under the displayed unbounded parameter
conditions.  Theorem~\ref{thm:certificate-soundness} gives the
obstructions.  Translation and dihedral normalization are
included in the formal wrappers
\nolinkurl{CrossClassification.one_no_plane} and
\nolinkurl{CrossClassification.adjacent_no_plane}.
\end{proof}

Every remaining positive-arm cross has an opposite pair of
unit arms.  The construction for that case, together with
Lemma~\ref{obs:cross-halfplane}, completes the cross classification.

\section{The earlier L classification and its formal reconstruction}
\label{obs:L}

For $L=\Pshape{a}{b}{0}{0}$ normalize $a\ge b\ge1$.
Its bounding dimensions are $(a+1)$ by $(b+1)$.
The classification is the author's earlier result
\cite{raychevQuadrant2021,raychevHalfPlane2021}: $b=1$ admits
a rectangle; $b=2$ and $(a,b)=(4,3)$ admit a strip but no
quadrant; all other cases admit the plane but no half-plane.
Here we summarize the negative proofs and specify where the
formal reconstruction uses computer-assisted local arguments.

The quadrant paper excludes every $a\ge b\ge2$ by local
corner arguments.  Although one theorem is phrased for a
rectangle, its proof uses only the two adjacent boundary rays;
the half-plane paper explicitly records the quadrant conclusion.
The cases are separated according to equal legs, the two
small $b=2$ cases, the remaining $b=2$ family, and $a>b\ge3$.
Endpoint contacts are excluded first; the remaining configurations
force an uncovered cell or an enclosed rectangle too short for
either orientation.  These arguments are reproduced in the
formal classification, rather than assumed as axioms.

\begin{proposition}[Reconstructed L boundary reductions]
\label{obs:L-reductions}
Let $a>b\ge3$ and $a\ge5$.  In a hypothetical half-plane
tiling by $L$, the following consequences hold:
\begin{enumerate}
\item No boundary contact is solely a vertical tip, and no
horizontal boundary leg is the short leg.
\item Consecutive long boundary legs have alternating stem
orientations.  Eight consecutive such legs can be extracted
without assuming any periodicity.
\item Except for the separately excluded case $(a,b)=(5,3)$,
four central legs may be normalized to junctions
$-1,0,2a+1,2a+2$ in row zero.  Coverage above them forces
$a=b+2$ and, up to reflection, the two row-one tiles
\[
       (1,1;\ a,b,0,0),\qquad (2a,1;\ 0,a,b,0).
\]
\end{enumerate}
\end{proposition}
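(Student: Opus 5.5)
We work in the half-plane $y\ge0$ with $L=\Pshape ab00$, $a>b\ge3$, $a\ge5$. A copy meeting row zero is one of three kinds. It may be a \emph{long horizontal leg}, with the $a+1$ cells on row zero and the stem going up. It may be a \emph{short horizontal leg}, with $b+1$ cells on row zero. Or it may be a \emph{vertical tip}, where only the endpoint of a vertical leg lies on row zero. Each claim is then a blocked-gap argument in the style of Lemma~\ref{obs:short-run}, using whole-tile supports. Wherever the argument stops being uniform in $a,b$, we hand the branch to a finite certificate and conclude by Theorem~\ref{thm:certificate-soundness}.

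For (1), we first exclude the pure vertical tip. Its foot lies at height $a$ or $b$ and points left or right. Consider the cell adjacent to the tip on the side the foot points. It cannot be another tip, since the foot would intersect that tip's vertical leg. So it must be a horizontal leg beginning there. We then examine the enclosed region between the upright leg, the foot, and that horizontal leg's stem. It is a rectangle with one side of width at most $a$ and height at least $b-1\ge2$. Any tile entering it lies wholly inside, and neither orientation fits because both leg lengths exceed the available dimensions. The same template excludes a short horizontal leg. Its upward stem of height $a$ (when the vertical leg is the long one) together with its neighbor's vertical segment encloses a gap of width at most $b$ above the short leg. Since $b\ge3$, this gap is a blocked run of length between $3$ and $a$. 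Every starter there is an upward vertical leg, and three adjacent such legs collide. This uses $a>b$, so that a long crossbar cannot lie flat in the gap.

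For (2), (1) shows that row zero is partitioned into long legs of length $a+1$, each with its stem at its left or right end. Two consecutive legs with the same orientation leave, in row one, a gap of length exactly $a$ bounded by two stems of height $b$. A horizontal tile cannot fit flat in this gap. Vertical starters force at least three adjacent tips, or else a foot reaching across a stem, which is a contradiction. Hence the orientations alternate. Extracting eight consecutive legs is then only a matter of choosing any boundary cell and reading off the neighbors on both sides. Nothing here uses periodicity.

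For (3), we translate and reflect so that the alternating pattern places stems adjacent in pairs. Two legs then face away from each other and leave an open gap of width $2a$ in row one. This gives the junctions $-1,0$ and then $2a+1,2a+2$. Next we cover $(1,1)$ and its neighbors. A horizontal long leg on row one must start at $x=1$. Its stem placement, and the tile mirroring it from the right end, force the stated pair $(1,1;a,b,0,0)$ and $(2a,1;0,a,b,0)$. The leftover middle portion of rows one and two must then be fillable, and a counting argument on its width makes this possible only when $a=b+2$.

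The main obstacle is this last forcing step. The branching is heavy for small $a$, and $(5,3)$ in particular genuinely differs. We would rely on a symbolic certificate for the branches that are not uniform, and treat $(5,3)$ with its own finite tree.
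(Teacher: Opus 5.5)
Your outline (exclude tips, then short boundary legs, then equal-facing pairs, then analyze the central gap) matches the paper's. However, the local arguments you give for each step are false, because the blocked-run template of Lemma~\ref{obs:short-run} does not transfer to L boundaries.

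\textbf{Item (2).} The row-one gap between two equal-facing long legs has length $a$. The short leg has only $b+1\le a$ cells, so a horizontal tile \emph{can} lie flat there. Moreover, the walls are stems reaching only height $b$. Any tip started in row one has its junction at height $1+b$ or $1+a$, so a foot passing over a stem is no contradiction.

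\textbf{Item (1), tips.} Two adjacent tips can coexist. Take a long vertical leg on $x=0$ with junction $(0,a)$ and foot $\{1,\dots,b\}\times\{a\}$, and a short vertical leg on $x=1$ with junction $(1,b)$ and foot $\{2,\dots,a+1\}\times\{b\}$. These are disjoint because $a>b$. Your ``enclosed rectangle'' is also not closed in general. For example, that length-$b$ foot beside a long boundary leg whose stem stands at $x=a+1$ with height $b$ leaves the region open.

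\textbf{Item (1), short leg.} Above a short boundary leg $(0,0;b,a,0,0)$, nothing supplies a right wall at $x=b+1$, since the neighbor may put its stem at its far end. On the left, a tip at $(1,1)$ with long vertical leg has junction $(1,a+1)$. That is above the top $(0,a)$ of the left wall, so its foot may point left.

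\textbf{Walled runs.} Even inside a genuinely walled run, ``three adjacent tips collide'' is false for L. Only a right-pointing tip immediately followed by a left-pointing one is excluded.

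This is why the paper proves these items by exhaustive local case analysis rather than gap counting. Above the normalized short leg it enumerates the seven disjoint covers of $(1,1)$ and closes six of them by finite obstructions. It eliminates the seventh only after applying those six consequences to the tile covering the cell just left of the boundary leg. That ordering is needed to avoid circular use of the short-leg exclusion, and your plan lacks it. Alternation is likewise a finite obstruction, not a gap count.

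\textbf{Item (3).} This is not yet an argument. The pair $(1,1;a,b,0,0)$, $(2a,1;0,a,b,0)$ is not a mirror pair: one lays $a+1$ cells flat and the other $b+1$. Together they fill $\{1,\dots,2a\}$ exactly when $a+b+2=2a$. So there is no leftover middle to count, and $a=b+2$ must be established before the pair can appear, not deduced afterwards from a remainder. Your assertion that row one starts with a long leg at $x=1$ is also unjustified. The missing ingredients are:
\begin{itemize}
\item the tip and corner exclusions forcing horizontal legs across $\{1,\dots,2a\}$;
\item the successor and endpoint counts;
\item the separate outward-short-leg obstructions for $a=b+1$ and $a\ge b+3$.
\end{itemize}
Handing ``non-uniform branches'' to certificates does not repair steps whose uniform reasoning is itself wrong.
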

\begin{proof}[Proof structure and computer-assisted components]
The first two steps follow the boundary strategy in
\cite{raychevHalfPlane2021}.  Once tips are excluded, normalize
a proposed short boundary leg to $(0,0; b,a,0,0)$.
There are seven possible disjoint covers of $(1,1)$.
Six have finite obstructions.  The seventh is eliminated by
first applying the six established consequences to the tile
covering the cell just left of the boundary leg, and then
checking its remaining long-leg alternatives.  This dependency
order avoids circular use of the short-leg exclusion.
Equal-facing consecutive long legs similarly have a finite
obstruction, proving alternation.

Outside that exceptional case, the central stems bound the
first-row interval $1,\ldots,2a$.
Tip and corner exclusions force its coverage by horizontal
legs.  Successor and endpoint counts, together with the
outward-short-leg obstructions for $a=b+1$ and $a\ge b+3$,
leave $a=b+2$.  At that difference the endpoint count gives
the displayed mixed pair or its reflection.  These are
symbolic reductions for unbounded arm lengths, replayed by
\nolinkurl{LHalfPlaneShortbarAssembly}, \nolinkurl{LHalfPlaneAssembly},
and \nolinkurl{LHalfPlaneAlternating}; their finite branches use
Theorem~\ref{thm:certificate-soundness}.
\end{proof}

\begin{figure}[htbp]\centering
\includegraphics[width=\linewidth]{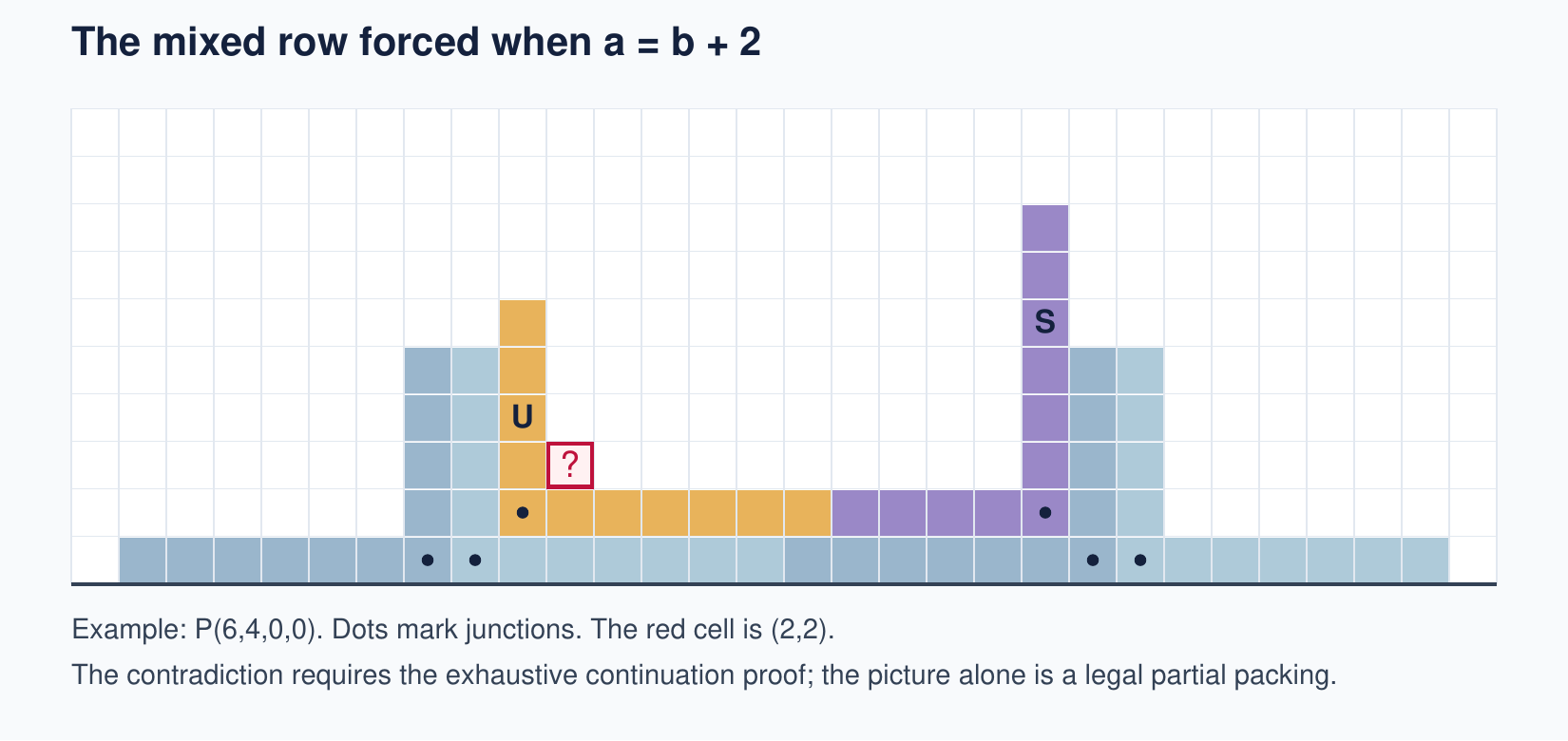}
\caption{The mixed configuration in the final unequal-L obstruction,
illustrated for $a=6$, $b=4$. Four boundary tiles support the mixed
row; the certificate starts with the uncovered cell $(2,2)$.}
\end{figure}

\begin{proposition}[Computer-assisted final L obstructions]
\label{obs:L-final}
The mixed configuration of Proposition~\ref{obs:L-reductions}
is impossible for $a=b+2$, $b\ge4$.  The case $(a,b)=(5,3)$
also has no half-plane tiling.
\end{proposition}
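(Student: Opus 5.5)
The plan is to reduce both assertions to finite obstruction trees in the sense of Theorem~\ref{thm:certificate-soundness}. For the mixed configuration the trees carry symbolic parameters; for $(5,3)$ the parameters are fixed.

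\emph{The mixed configuration.} Put $a=b+2$ with $b\ge4$. Take as root the four central boundary tiles with junctions $-1,0,2a+1,2a+2$ in row zero, with the orientations forced by Proposition~\ref{obs:L-reductions}. Add the two row-one tiles $(1,1;\,a,b,0,0)$ and $(2a,1;\,0,a,b,0)$. All six are whole tiles of a hypothetical half-plane tiling, so they are legitimate selected supports.

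I would first query the uncovered cell $(2,2)$. It sits just above the long horizontal leg of the first row-one tile and just right of that tile's vertical leg. Hence its cover must have an endpoint contact there. The possibilities are a horizontal leg starting at $x=2$, a vertical leg starting at $y=2$, or the corner of an L. For each of the eight orientations and every integer junction, the candidate is tested symbolically against the selected tiles and the half-plane. Arm lengths are written as $b$ and $b+2$, and only $b\ge4$ is assumed.

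The surviving branches should be few. Each leaves a trapped run between the stem of $(1,1;\,a,b,0,0)$ and some new tile. That run is either shorter than the short leg, or an enclosed rectangle narrower than $b+1$ in both orientations. These are the same blocked-run and enclosed-rectangle contradictions used earlier in the L reductions. Where a branch does not close immediately, I would query the next uncovered cell adjacent to the new tile, in the same manner. The aim is a tree of bounded depth, uniform in $b$. Its branch inequalities are linear in $b$, so they can be discharged by the same kind of arithmetic lemmas used for the $\Pshape a110$ family.

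\emph{The case $(5,3)$.} This case was excluded separately in Proposition~\ref{obs:L-reductions}, so the reduction does not supply the normalized four-leg configuration. I would instead use the parts of that proposition that still hold for $(a,b)=(5,3)$. Boundary contacts are then long horizontal legs with alternating stem orientations, and eight consecutive legs can be extracted. Up to reflection, there are only finitely many ways their stem directions and junction spacings can occur. For each pattern, I would run a fixed-parameter certificate on a boundary patch of a few rows. As in Proposition~\ref{obs:finite-boundary}, overhang through the artificial sides is allowed, so the patch is not treated as a region boundary. The search would terminate with an uncovered cell in every branch.

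\emph{Expected obstacle.} The main difficulty is keeping the symbolic mixed-configuration tree finite. Candidate covers whose legs reach far to the right, past $x=2a$, must be killed by the second row-one tile $(2a,1;\,0,a,b,0)$ and not by further exploration. I would choose query cells adjacent to both supports so that every long placement collides within one step. Finally, Theorem~\ref{thm:certificate-soundness} turns both trees into genuine obstructions for arbitrary infinite tilings.
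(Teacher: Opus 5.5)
Your plan is the paper's proof. It uses an exhaustive symbolic certificate rooted at the four boundary tiles and the two row-one tiles, starting at $(2,2)$ and converted to an obstruction by Theorem~\ref{thm:certificate-soundness}, together with fixed-parameter certificates for $(5,3)$ built from eight alternating long boundary legs. The paper's $(5,3)$ argument also includes its own short-bar certificate alongside the alternating-eight-bar one.

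One correction concerns your expected leaves: a blocked run with $g\le b$ is not contradictory merely for being shorter than the short leg, since low walls admit vertical starters. The paper's leaf lemmas need endpoint caps:
\begin{itemize}
\item five starters fail outright;
\item four need one height-$b$ cap;
\item three need both height-$b$ caps;
\item two need both height-$b$ caps and a height-$a$ cap;
\item a single starter needs both endpoints capped at both heights.
\end{itemize}
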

\begin{proof}
The mixed configuration uses the two specified row-one tiles
and four supporting boundary tiles.  Starting at $(2,2)$,
its 522-state certificate checks every possible covering
placement and closes every branch.  The exceptional $(5,3)$
case has separate short-bar and alternating-eight-bar
certificates with 421 and 539 states.  These certificates are
replayed against arbitrary whole-tile tilings; no bounded-width
or periodic-tiling hypothesis is imposed.

Some leaves close with an immediate impossible cover; others
use a short blocked run of length $g\le b$.  Five starters
are impossible without additional caps.  Four are excluded
by one endpoint cap at height $b$; three by both such caps;
two by both height-$b$ caps and one height-$a$ cap; even a
single starter is excluded when both endpoints are capped
at both heights.  The supporting cells belong to already
selected whole tiles, and the reflected and rotated versions
check that all queries remain inside the half-plane.
Theorem~\ref{thm:certificate-soundness} and these local lemmas
yield the claims.  The formal conclusions are
\nolinkurl{LHalfPlaneMixedMotif.mixed_impossible} and
\nolinkurl{LHalfPlaneFiveThree.no_halfplane}.
\end{proof}

Together with the equal-leg obstruction for $a=b\ge3$ from
\cite{raychevHalfPlane2021}, formalized in
\nolinkurl{LHalfPlaneEqual}, the two propositions give the entire
negative half-plane classification.  In particular, this is a
formal reconstruction of the author's prior L theorem, not a
new attribution of that classification.  The mixed certificate
replaces the paper's final diagram argument; it is not a
line-by-line formal translation of that argument.

\section{Completion of the classification}\label{sec:assembly}
\begin{proof}[Proof of Theorem~\ref{thm:classification}]
We first account for every tuple.  With no positive arms, or one
positive arm, the shape is a bar.  Two opposite positive arms also
give a bar, of length their sum plus one.  Two adjacent positive
arms give an L, which can be normalized to $a\ge b\ge1$.
Three positive arms give a T: rotate its missing arm south and
reflect horizontally if necessary to obtain $a\ge c\ge1$.
Four positive arms give a genuine cross.  Integer translations
and square-grid symmetries preserve all eight capabilities,
including integer-scale rep-tilings.  This reduction covers
degenerate tuples and does not require the parametrization to
be unique.

Bars, the L shapes with $b=1$, and the three small T shapes
with $b=c=1$, $a\le3$, have rectangle constructions in
Section~\ref{sec:constructions}.  Proposition~\ref{prop:hierarchy}
therefore gives every capability in the rectangle profile.

For the remaining L shapes, Section~\ref{sec:lpositive} gives
strip tilings when $b=2$ or $(a,b)=(4,3)$ and plane tilings
in every case.  Section~\ref{obs:L} excludes a quadrant for
all $b\ge2$, and a half-plane outside those strip cases.
Thus the former have exactly the strip profile and the latter
exactly the plane-only profile.  In particular, absence of a
quadrant excludes $\Rep$ as well as $\BS$, $\HS$, and $\R$.

For a T with $b=1$, Proposition~\ref{prop:tstrip} always
gives a strip.  If $c\ge2$, Lemma~\ref{obs:quadrant}
excludes a quadrant, hence gives exactly the strip profile.
If $c=1$ and $a\ge4$, the same construction gives a bent
strip, while Theorems~\ref{thm:gunhs} and~\ref{thm:gunrep}
exclude a half-strip, a rectangle, and a rep-tiling.
The hierarchy supplies the other four positive capabilities,
giving exactly the bent-strip profile.

For a T with $b\ge2$, Theorem~\ref{obs:T-halfplane}
excludes a half-plane and hence every listed capability
except possibly the plane.  Proposition~\ref{prop:tplane}
gives a plane tiling when $c=1$, $b=2$, or
$(c=2,\ b=a+3)$.  In the complement, $b\ge3$ and $c\ge2$.
When $c\ge3$, Theorem~\ref{obs:three-long-T} excludes the
plane; when $c=2$, Proposition~\ref{obs:two-short} and
Theorem~\ref{obs:two-long} cover respectively $b<a+3$ and
$b>a+3$.  Thus this complement consists precisely of non-tilers.

Finally, no genuine cross tiles a half-plane, by
Lemma~\ref{obs:cross-halfplane}.  A cross with opposite unit
arms has a plane tiling by Proposition~\ref{prop:crosspositive},
after rotation if necessary.  If no opposite pair is unit,
there are either no unit arms, exactly one, or exactly two
adjacent unit arms.  Theorem~\ref{obs:long-cross} excludes the
first case, and Proposition~\ref{obs:unit-cross}, after
dihedral normalization, excludes the other two.  This proves
both cross rows and completes every entry of the table.
\end{proof}

\section{Formal verification and reproducibility}
\label{sec:formalization}

Theorem~\ref{thm:classification} has a complete formal proof in Lean
4.33.1~\cite{demouraUllrich2021}, using Lean's standard library. The source package contains 538 local
modules and a single unconditional entry point:
\begin{verbatim}
theorem all_tuples_classified (a b c d : Nat) :
    Classified a b c d (classifyTuple a b c d)
\end{verbatim}
This declaration belongs to the namespace \nolinkurl{PolyominoFormal} in
\nolinkurl{MainClassification.lean}. Its only arguments are the four arm
lengths. In particular, it has no unproved local-obstruction or
family-classification hypothesis.

\paragraph{What is formalized.}
A lattice region is a predicate on \(\Z^2\). A placed tile is represented
by its integer junction coordinates and four directed arm lengths.
\nolinkurl{Copy} lists the eight rotations and reflections of the specified
shape. \nolinkurl{Tiling} consists of an arbitrary predicate on placed tiles,
together with proofs of congruence, containment of every cell of each whole
tile in the region, coverage of every region cell, and pairwise cellwise
disjointness. Thus an infinite tiling need not be periodic, computable, or
described by a finite motif. For finite target regions this definition
also gives ordinary finite tilings: every tile contains its junction cell,
and distinct tiles have distinct junction cells inside the target.

The region predicates define the full rectangle, half-strip, bent strip,
quadrant, strip, half-plane, and plane. The four predicates involving
widths or heights existentially quantify over all positive integer
dimensions. Consequently, a half-strip impossibility theorem excludes
every width, rather than a finite range examined by a search. The
rep-tiling predicate quantifies over natural linear scales \(k\geq2\).
Its target replaces each original cell by a \(k\times k\) block; it is
not obtained by multiplying the arm lengths while retaining unit
thickness. The formal claim concerns congruent original tiles, integer
translations, and lattice rotations and reflections, with this integer
scale convention. It makes no assertion about arbitrary off-grid
Euclidean dissections.

The predicate \nolinkurl{Classified} states an equivalence for \emph{each} of
the eight capabilities. It records both positive and negative results,
not merely a strongest known construction. Five distinct capability
profiles occur in this family. \nolinkurl{TupleNormalization.lean} handles
rotations, reflections, bars, and zero-arm cases, including
\(\Pshape{0}{0}{0}{0}\). It reduces genuine T shapes to
\(a\geq c\geq1\), \(b\geq1\), and L shapes to sorted adjacent arms.
The symmetry arguments transport the enlarged target as well as the
small tiles when proving invariance of rep-tiling.

\paragraph{Finite certificates and infinite tilings.}
The computer-assisted obstructions have two logically separate parts.
A finite case analysis proves a local statement; a geometric argument
extracts its hypotheses from an arbitrary exact tiling. Covering a chosen
uncovered cell supplies an actual tile from that tiling. Completeness
lemmas enumerate every possible orientation and relative placement that
could cover it. Each resulting branch proves an overlap, a containment
violation, another impossible configuration, or a stated successor
configuration. The arithmetic obligations are proved in Lean. External
search programs select useful cells and generate candidate case trees,
but their answers are not assumed by the formal proof.

For \(\Pshape{n}{1}{1}{0}\), the finite corner analysis is packaged
separately for \(n=4\), \(n=5\), and the symbolic range \(n\geq6\).
These cases feed the same arbitrary-width half-strip argument. If a
corner at height \(Y\) in a half-strip of height \(H\) has type \(s\),
its rank is
\[
                    3(H-Y)+r(s),\qquad 0\leq r(s)\leq2.
\]
Every permitted transition decreases this nonnegative integer: a
transition either rises or decreases the auxiliary rank at unchanged
height. This is a well-founded descent, not an inference from checking
large finite rectangles. The L obstructions likewise connect finite
placement arguments to actual half-plane tilings through separately
proved boundary reductions. Other formal components construct infinite
tilings explicitly and prove the compactness and hierarchy implications
used in assembling the full profiles.

\begin{table}[tb]
\centering
\small
\begin{tabular}{p{0.29\linewidth}p{0.65\linewidth}}
\hline
Mathematical result & Lean declaration and source module\\
\hline
Complete classification &
\nolinkurl{all_tuples_classified},
\nolinkurl{MainClassification.lean}\\[3pt]
Sorted L classification &
\nolinkurl{sorted_L_classified},
\nolinkurl{MainClassification.lean}\\[3pt]
Unequal L obstruction &
\nolinkurl{LHalfPlaneUnequal.no_halfplane},
\nolinkurl{LHalfPlaneUnequal.lean}\\[3pt]
Exceptional L \(\Pshape{5}{3}{0}{0}\) &
\nolinkurl{LHalfPlaneFiveThree.no_halfplane},
\nolinkurl{LHalfPlaneFiveThree.lean}\\[3pt]
Full T classification &
\nolinkurl{normalized_T_classified},
\nolinkurl{TClassification.lean}\\[3pt]
\(\Pshape{4}{1}{1}{0}\), \(\Pshape{5}{1}{1}{0}\) &
\nolinkurl{p4_exact_bs}, \nolinkurl{p5_exact_bs},
\nolinkurl{InitialTwoClassification.lean}\\[3pt]
All \(\Pshape{n}{1}{1}{0}\) &
\nolinkurl{classify_gun},
\nolinkurl{GunFamilyClassification.lean}\\[3pt]
Four positive arms &
\nolinkurl{classify_positive_cross},
\nolinkurl{PositiveCrossClassification.lean}\\
\hline
\end{tabular}
\caption{Principal formal statements. All declarations in this table
have the namespace prefix \nolinkurl{PolyominoFormal.}; files are relative
to the archive's \nolinkurl{lean/} directory.}
\label{tab:lean-map}
\end{table}

\paragraph{Trust and reproduction.}
The accepted build verified the exact source dependency closure of
\nolinkurl{MainClassification} and \nolinkurl{GunFamilyClassification}. Lean's
axiom report for \nolinkurl{all_tuples_classified} lists exactly
\nolinkurl{propext}, \nolinkurl{Classical.choice}, and \nolinkurl{Quot.sound},
the standard foundational axioms used by this development. The accepted
sources contain no admitted proofs, additional axioms, calls to
\nolinkurl{native_decide}, or trusted external solver answers. Certificate
checking produces ordinary Lean proof terms. This assurance concerns the
formal statements and their definitions; correspondence with the
mathematical exposition remains a separate matter for review.

The \href{\artifacturl}{frozen source archive} is identified by an immutable
repository commit. It contains the pinned toolchain, all 538 local source
modules, the build driver, a file-hash manifest, build receipts, and the
axiom reports. No precompiled local proof objects or external solver is
needed. After installing Lean 4.33.1, extracting the archive, and entering
its \nolinkurl{polyominoes} directory, run
\noindent\begin{minipage}{\linewidth}
\begin{verbatim}
python3 src/check_lean.py MainClassification \
  GunFamilyClassification --jobs 1
\end{verbatim}
\end{minipage}
The driver uses one worker and one Lean thread by default. It records
source, dependency, compiler-version, and compiled-object hashes in an
isolated output directory. An existing result is reused only when these
identifiers match; specifying a new \nolinkurl{--output} directory requests
a fresh build. The receipts document the completed verification;
rebuilding independently checks the proofs. The broader research archive
also preserves exploratory and abandoned attempts, and is not the
accepted proof dependency closure.

\section*{Acknowledgments}
\addcontentsline{toc}{section}{Acknowledgments}
I am deeply grateful to Professor Stanislav Harizanov, my mentor during the original
research, for recognizing my potential, offering important advice, and
carefully checking my work. I also thank my mathematics teacher Dimitar
Dimitrov, whose guidance extended far beyond the classroom. As a mentor
in life, he helped me find direction, encouraged me to pursue mathematical
research, and introduced me to Professor Harizanov. Their encouragement and support
were central to the work I undertook between October 2020 and March 2021.

The mathematical and computational role of Astra 6, operating through
the Codex harness, is described in Section~\ref{sec:intro}. The contributions
of earlier authors to the tiling results and constructions are credited
at their points of use.

\newcommand{\doi}[1]{doi:\ \href{https://doi.org/#1}{\nolinkurl{#1}}}
\bibliographystyle{plainnat}
\bibliography{references}
\end{document}